\declaretheorem[numberwithin=section]{theorem}
\declaretheorem[numberlike=theorem]{lemma}
\declaretheorem[numberlike=theorem, style=definition]{definition}
\declaretheorem[numberlike=theorem, style=definition]{example}
\declaretheorem[numberlike=theorem, style=remark]{remark}
\numberwithin{equation}{section}
\newcommand{\mapx}{\chi} 
\newcommand{\mapy}{\Upsilon}
\newcommand{\field}{\mathbb{F}}
\newcommand{\restrictiondef}{R}
\newcommand{\restrictionalgo}{\mathfrak{R}}
\newcommand{\cedef}{E}
\newcommand{\cealgo}{\mathfrak{E}}
\newcommand{\selectIntX}{\tau} 
\newcommand{\intY}{\gamma} 
\newcommand{\intFilt}{\SCOne^{\psi} \cap \filtTwo} 
\newcommand{\intXdelta}{\rho} 
\newcommand{\intFiltBirth}{\widehat{\beta}}
\newcommand{\intFiltDeath}{\widehat{\delta}}
\newcommand{\SCOne}{Z}
\newcommand{\SCTwo}{Y}
\newcommand{\filtOne}{\SCOne^\bullet}
\newcommand{\filtTwo}{\SCTwo^\bullet} 
\newcommand{\IntDecZ}{\mathcal{B}}
\newcommand{\IntDecDeltaLetter}{F}
\newcommand{\IntDecDeltaLower}{\MakeLowercase{\IntDecDeltaLetter}}
\newcommand{\IntDecDelta}{\mathcal{\IntDecDeltaLetter}}
\newcommand{\IntDecDeltaOneLetter}{F}
\newcommand{\IntDecDeltaTwoLetter}{G}
\newcommand{\IntDecDeltaOneLower}{\MakeLowercase{\IntDecDeltaOneLetter}}
\newcommand{\IntDecDeltaTwoLower}{\MakeLowercase{\IntDecDeltaTwoLetter}}
\newcommand{\IntDecDeltaOne}{\mathcal{\IntDecDeltaOneLetter}}
\newcommand{\IntDecDeltaTwo}{\mathcal{\IntDecDeltaTwoLetter}}
\newcommand{\IntDecYLetter}{D}
\newcommand{\IntDecYLower}{\MakeLowercase{\IntDecYLetter}}
\newcommand{\IntDecY}{\mathcal{\IntDecYLetter}}
\newcommand{\IntDecYLetterTwo}{G}
\newcommand{\IntDecYTwo}{\mathcal{\IntDecYLetterTwo}}
\newcommand{\Rho}{\text{\sc{bars}}}
\newcommand{\bc}{\text{\sc bc}}
\newcommand{\HH}{\smash{\widetilde{H}}}
\newif\ifshow 
\begin{document}
 
\title[Persistent extensions and analogous bars]{Persistent extensions and analogous bars: data-induced relations between persistence barcodes}

\author{Hee Rhang yoon}
\address{Department of Mathematical Sciences, University of Delaware, Newark, DE 19716}
\email{irishryoon@gmail.com}

\author{Robert Ghrist}
\address{Department of Mathematics and Electrical \& Systems Engineering, University of Pennsylvania, Philadelphia, PA 19104}
\email{ghrist@math.upenn.edu}

\author{Chad Giusti}
\address{Department of Mathematical Sciences, University of Delaware, Newark, DE 19716}
\email{cgiusti@udel.edu}

\date{\today}

\begin{abstract}
A central challenge in topological data analysis is the interpretation of barcodes. The classical algebraic-topological approach to interpreting homology classes is to build maps to spaces whose homology carries semantics we understand and then to appeal to functoriality. However, we often lack such maps in real data; instead, we must rely on a cross-dissimilarity measure between our observations of a system and a reference. In this paper, we develop a pair of computational homological algebra approaches for relating persistent homology classes and barcodes: \emph{persistent extension}, which enumerates potential relations between cycles from two complexes built on the same vertex set, and the method of \emph{analogous bars}, which utilizes persistent extension and the witness complex built from a cross-dissimilarity measure to provide relations across systems. We provide an implementation of these methods and demonstrate their use in comparing cycles between two samples from the same metric space and determining whether topology is maintained or destroyed under clustering and dimensionality reduction.
\end{abstract}

\maketitle

\section{Introduction}

Persistent homology \cite{ELZ_persistence} measures how structure (encoded as \emph{homology classes}) varies with the parameter in a parameterized space. In principle, to apply this tool to data, we design parameterized combinatorial encodings in which the presence or absence of homology classes describes features of interest. In practice, it is prohibitively difficult and time-consuming to construct novel combinatorial structures which would be easy to interpret in terms of our data sets. Rather, we employ standard encodings for which theory and computational tools are readily available.

Currently, the most common approach is to take as input a pairwise dissimilarity measure $M_P$ for system constituents $P = \{p_1, \dots, p_n\}$ and to construct from this data a \emph{weighted clique complex} $X_P^\bullet = X(M_P)^\bullet;$ vertices correspond to the elements in the system and simplices are weighted by the maximum pairwise dissimilarity of their vertices. When $M_P$ is a metric, this produces the usual weighted \emph{Vietoris-Rips complex} for a point cloud. We then employ computational homological algebra to determine the complex's $k$th persistent reduced\footnote{All complexes in this paper are non-empty. To simplify statements of results, we will always use reduced homology, which we denote $\HH_k$, and we will omit the word \emph{reduced} to avoid clutter.} homology, $P\HH_k(X_P^\bullet).$ Most current software returns this information in the form of a \emph{barcode}, $\bc_k(X_P^\bullet)$, which encodes the homology classes and their \emph{birth} and \emph{death} parameters.

We are then left with the problem of interpreting the barcode. This is difficult in part due to the intricate structure of the weighted clique complexes, including trade-offs between fidelity of representations and their combinatorial and computational complexity, and in part due to the abstract nature of persistent homology classes, made up of sequences of affine subspaces of quotient vector spaces. As such, most modern applications of persistent homology involve vectorization of the barcode as a statistic for differentiating classes of data, or \textit{ad hoc} interpretations of classes.

\medskip

Success stories from classical algebraic topology suggest that to understand homology classes we should appeal to functoriality. By constructing maps to or from reference objects which are better understood, topologists have developed \emph{semantics} for (co)homology classes in a range of contexts. Perhaps the best known classical examples are classifying maps and characteristic classes for vector bundles \cite{Milnor1956}, which have been adapted to provide nonlinear coordinate systems for data sets \cite{Perea_circular_coord}. 

In the context of applied topology, our reference  often comes in the form of more data: observations of some collection $Q= \{q_1, \dots, q_m\}$ of inputs to, outputs from, or known correlates for activity in the system $P$. From a dissimilarity matrix $M_Q$ derived from these data $Q$, we can obtain another combinatorial space and compute a barcode $\bc_k(X_Q^\bullet)$. When this second data set is accessible, experimentally or theoretically, we are better equipped to assign semantics to classes in $P\HH_k(X_P^\bullet)$. However, \emph{a priori} there is insufficient information to construct the maps we need to apply functoriality, and we are left with the problem of determining whether and how classes from $P\HH_k(X_Q^\bullet)$ correspond to classes in $P\HH_k(X_P^\bullet)$.
  
To do so, we require some notion of how $P$ and $Q$ are related. In scientific and engineering applications, a common and effectively minimal way to satisfy this requirement is through a measure  $M_{P,Q}$ of cross-dissimilarity between $P$ and $Q$. For parameterized clique complexes, $M_{P,Q}$ measures dissimilarity between the vertices of $X_P^\bullet$ and $X_Q^\bullet$. It is tempting to construct a Vietoris-Rips complex on the joint dissimilarity measure on $P\cup Q$ given by combining $M_P$, $M_Q$, and $M_{P,Q}$
and then to apply recently developed methods, including induced matching \cite{induced_matching, PD_as_D}, cycle registration \cite{cycle_registration}, or basis-independent partial matching \cite{basis_indep_matching}, to match classes via the zig-zag of induced inclusion maps. However, as we will discuss in Section \ref{approaches}, even in simple cases involving pairs of point clouds in the same metric space this approach can fail to produce matches which satisfy our intuition.

\medskip

The alternative we develop here is to observe that $M_{P,Q}$ is precisely the data of a weighted \emph{witness complex} \footnote{We use a slightly different notion of witness complex than the one introduced in \cite{Witness}. Our witness complex would be identical to that of \cite{Witness} if we assign a birth time of 0 to all vertices and take $v = 0$ in their definition. Our witness complex is a filtration of the Dowker complex. } \cite{Dowker, Witness}. The witness complex $W_{P, Q}^\bullet$ has vertices $P$ and faces indexed by $Q$: the face $q_c$ at parameter $\ell$ has vertices $\{p_r \in P \;:\; (M_{P,Q})_{r,c} \leq \ell\}$. The Functorial Dowker Theorem \cite{Dowker, functorial_dowker} provides an explicit isomorphism $P\HH_k(W_{P, Q}^\bullet) \cong P\HH_k(W_{Q, P}^\bullet)$, and thus our cross-dissimilarity matrix provides a bridge between persistent homology classes in simplicial complexes with vertices $P$ and $Q$. 

To apply this information, in Section \ref{Extension} we  introduce \emph{persistent extension}, a general method for comparing persistent homology classes between complexes $Z^\bullet$ and $Y^\bullet$ supported on the same vertex set. Suppose $Y^\epsilon$ is contractible for large $\epsilon,$ and fix some parameter $\psi$ of $Z^\bullet$. Persistent extension takes as input a class $[\tau] \in \HH_k(Z^\psi)$ and outputs an enumeration of persistent homology classes in $P\HH_k(Y^\bullet)$ that contain representatives of $[\tau]$. To do so, we observe that classes of infinite persistence in the \emph{auxiliary} filtered complex $Z^\psi \cap Y^\bullet$ are precisely the elements of $\HH_k(Z^\psi)$. We then leverage a zig-zag through the auxiliary persistence module to set up systems of linear equations that describe the image in  $P\HH_k(Y^\bullet)$ of all elements of $P\HH_k(Z^\psi \cap Y^\ell)$ which map to $[\tau]$ as $\ell \to \infty.$ As it is common to speak of persistent homology in terms of the barcode, in Section \ref{section:bar_to_bars_extension} we develop the necessary tools to translate between the persistence module-level information and the barcodes.

In Section \ref{analogous_intervals}, we combine the isomorphism from Dowker's theorem with persistent extension to obtain a pair of processes for relating persistent homology classes in $P\HH_k(X^\bullet_Q)$ to those in $P\HH_k(X^\bullet_P).$ The first, the \emph{feature-centric analogous bars} method, focuses on an individual bar $\tau \in \bc_k(X^\bullet_Q)$ and enumerates subsets of $\bc_k(X^\bullet_P)$ which potentially correspond to $\tau$ given the structure of $M_{P,Q}$. The \emph{similarity-centric analogous bars} method, on the other hand, focuses on significant features in $\bc_k(W^\bullet_{P,Q})$, which intuitively describe very strong relationships between subsets of bars in $\bc_k(X_Q^\bullet)$ and $\bc_k(X_P^\bullet)$ in the same way long lifetimes in Vietoris-Rips complexes of point clouds intuitively correspond to significant features of the support of the underlying distribution. 

\medskip

This paper is organized as follows. In Section \ref{preliminaries}, we recall the relevant algebraic and topological notions and set the notations and assumptions, and in Section \ref{approaches}, we discuss specific difficulties that arise when attempting to match bars between simplicial complexes, including a discussion of existing methods. Having established these preliminaries, in Sections \ref{Extension}-\ref{analogous_intervals}, we provide the details of the methods of persistent extension and analogous bars and demonstrate their use on simple examples. In Section \ref{applications} we demonstrate some simple applications: using analogous bars to compare the barcodes of two samples from the same distribution on a metric space, and using persistent extension to determine if topological structures are preserved under clustering and dimensionality reduction. Finally, in Section \ref{conclusion} we discuss context and future directions. To improve readability, we defer technical proofs to appendices. 

\subsection{Contributions}
The principal contributions of this paper are:
\begin{itemize}
    \item the \emph{persistent extension} method, which compares persistent cycles and barcodes between filtered simplicial complexes built on a common vertex set,
    \item the \emph{analogous bars} method, which compares persistent cycles and barcodes between two distinct clique complexes using a cross-dissimilarity measure,
    \item implementation of these methods built using the Eirene persistent homology package \cite{henselmanghristl6}, available at \url{https://github.com/UDATG/analogous_bars}, and 
    \item demonstration on some toy examples, along with a discussion of potential applications.
\end{itemize}

\begin{remark}
\label{rem:other_work}
While the authors were writing this paper,  independent work \cite{JNT_barcode_bases} investigating the structure of the set of bases for persistence barcodes appeared. While the fundamental aims of the two papers are different, \cite{JNT_barcode_bases} establishes results that generalize our Lemmas \ref{lemma:basis_change}, \ref{lemma:diagonal}, and \ref{lemma:LZ}, in the course of developing a more complete picture of the relationship between  persistence modules and barcodes. We retain our versions of these results in this manuscript for completeness and to save the reader effort in translating their work to our language, notation, and perspective.
\end{remark}

\subsection{Acknowledgements}

The authors would like to thank Gregory Henselman-Petrusek for a great many enlightening conversations. CG is supported by NSF-1854683 and AFOSR FA9550-21-1-0266. HY is supported by NSF-1854683 and ONR N00014-16-1-2010. RG is supported by ONR N00014-16-1-2010 and NSF-1934960.

\section{Preliminaries}\label{preliminaries}

We begin by recalling relevant definitions and results from persistent homology. Along the way we will introduce some new terminology and results about change of basis of a persistence module. We assume readers are familiar with the general theory of persistent homology. A more complete development of this material can be found in, for example, \cite{ELZ_persistence, compute_PH, Carlsson_topology_data, Ghrist_barcodes}. Fix a field $\mathbb{F}$ throughout.

\subsection{Primer on persistent homology} 

\begin{definition} 
A persistence module is a $\mathbb{Z}$-graded $\mathbb{F}$-vector space $V^\bullet = \bigoplus_{\ell = -\infty}^\infty V^\ell$ equipped with linear \emph{structure maps} $\{ \phi^\ell : V^\ell \to V^{\ell + 1} \}$, thought of as a $\mathbb{F}[x]$-module in which $x$ acts on elements of $V^\ell$ via $\phi^\ell.$ We say a persistence module has \emph{finite support} if $V^\ell = 0$ for all $\ell$ outside some compact interval. 
\end{definition}

We will omit the field from the notation and simply say ``persistence module" throughout. Similarly,  we will usually refer to a persistence module by the name of its underlying graded vector space, leaving the structure maps implicit. Finally, as all of our persistence modules have finite support, we will omit vector spaces and maps outside of the support from diagrams without further comment.

The prototypical examples of persistence modules are the \emph{interval modules}.

\begin{definition}
For $\beta, \delta \in \mathbb{Z}$, the  \emph{interval module} $I[\beta, \delta)^\bullet$ is the persistence  module $\bigoplus_{\ell = -\infty}^\infty I[\beta, \delta)^\ell$, where $$I[\beta, \delta)^\ell \cong \begin{cases}
\mathbb{F} & \beta \leq \ell < \delta\\
0 & \text{else}\end{cases},$$ along with structure maps $\phi^\ell : I[\beta, \delta)^\ell \to I[\beta, \delta)^{\ell+1}$ for all $\ell \in \mathbb{Z}$ given by $$\phi^\ell(f) = \begin{cases} f & \ell \neq \beta-1, \delta \\
0 &\text{ else }\end{cases}.$$
\end{definition}

Our primary example of interest arises in the context of filtered topological spaces. 

\begin{definition}
Let
$$\filtOne = \SCOne^1 \xhookrightarrow{\iota^1} \SCOne^2 \xhookrightarrow{\iota^2} \cdots \xhookrightarrow{\iota^{N-2}} \SCOne^{N-1} \xhookrightarrow{\iota^{N-1}} \SCOne^N$$ be a filtered topological space. The \emph{degree-$k$ reduced persistent homology} of $\filtOne$ is the persistence module $P\HH_k(\filtOne)$ with
$$\left(P\HH_k(\filtOne)\right)^\ell =\begin{cases}
\HH_k(\SCOne^\ell; \mathbb{F})& 1 \leq \ell \leq N \\
0 & \text { else }
\end{cases}$$ with structure maps given by the induced maps on reduced homology $\iota_\ast^\ell$ where applicable and zero maps elsewhere.
\end{definition}

A fundamental result in the study of persistent homology tells us that, for finite data, persistent homology can be decomposed as a collection interval modules. 

\begin{theorem}[Interval decomposition for persistent homology \cite{compute_PH}]\label{thm:structure}
Let $\filtOne$ be a filtered finite simplicial complex and $k$ a non-negative integer. Then there exists a triple $$(\bc_k(\filtOne), \beta, \delta)$$ called the \emph{barcode} of the persistence module $P\HH_k(\filtOne)$, where
\begin{enumerate}
    \item $\bc_k(\filtOne)$ is a finite set of \emph{bars},
    \item $\beta, \delta: \bc_k(\filtOne) \to \mathbb{Z}$ are functions that respectively record the \emph{birth} and \emph{death parameters} of each bar $\tau \in \bc_k(\filtOne),$ so that
    \item $\beta(\tau) < \delta(\tau)$ for all $\tau \in \bc_k(\filtOne)$, 
\end{enumerate} 
that is unique up to isomorphism\footnote{That is, set isomorphism on $\bc_k(\filtOne)$ along with precomposition by the inverse of that isomorphism for $\beta$ and $\delta.$},
along with an isomorphism of persistence modules
$$\mathcal{B} : \mathbb{I}_{\bc_k(\filtOne)} \xrightarrow{\cong}  P\HH_k(\filtOne),$$
where $\mathbb{I}_{ \bc_k(\filtOne)} = \bigoplus_{\tau\in \bc_k(\filtOne)} I[\beta(\tau), \delta(\tau))$ is the \emph{barcode module} for $\filtOne$ in dimension $k$. This isomorphism is called an \emph{interval decomposition} of $P\HH_k(\filtOne).$
\end{theorem}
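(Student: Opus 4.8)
The theorem is the classical structure theorem for the persistent homology of a finite filtered complex, and the plan is to deduce it from the structure theorem for finitely generated modules over the principal ideal domain $\field[x]$, in its graded form. The first step is to record that $P\HH_k(\filtOne)$ is pointwise finite-dimensional and finitely supported: each $\SCOne^\ell$ is a finite simplicial complex, so its (augmented, since we use reduced homology) simplicial chain complex is finite-dimensional in every degree, hence $\HH_k(\SCOne^\ell;\field)$ is finite-dimensional; and $\big(P\HH_k(\filtOne)\big)^\ell = 0$ whenever $\ell \notin \{1, \dots, N\}$ by definition. An alternative, more hands-on route is the matrix-reduction (``persistence pairing'') argument of \cite{ELZ_persistence}, but the algebraic argument is cleaner to outline here.

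Next I would pass to the graded $\field[x]$-module $M := \bigoplus_\ell \big(P\HH_k(\filtOne)\big)^\ell$ in which $x$ acts on the degree-$\ell$ summand by the structure map $\iota_*^\ell$ (and by the zero map where the structure map is zero). Two observations: $M$ is finitely generated over $\field[x]$ because it is finite-dimensional over $\field$ (a finite direct sum of finite-dimensional pieces); and $M$ is torsion because the structure map out of degree $N$ lands in $\big(P\HH_k(\filtOne)\big)^{N+1} = 0$, so $x^{N} M = 0$. Since $\field[x]$ with $\deg x = 1$ is a graded principal ideal domain whose homogeneous ideals are exactly the $(x^n)$, the graded structure theorem applies, and since $M$ is torsion there are no free summands, so
\[
M \;\cong\; \bigoplus_{i=1}^{r} \Sigma^{a_i}\,\field[x]/(x^{c_i}), \qquad c_i \geq 1,
\]
where $\Sigma^{a}$ shifts the grading so that the cyclic generator sits in degree $a$. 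One then checks directly from the excerpt's conventions that, as a persistence module, $\Sigma^{a}\field[x]/(x^{c})$ is precisely the interval module $I[a,\, a+c)^\bullet$ (it is $\field$ in degrees $a, a+1, \dots, a+c-1$, with identity structure maps internally and zero maps at the two boundaries). Taking $\bc_k(\filtOne) = \{1, \dots, r\}$ with $\beta(i) = a_i$ and $\delta(i) = a_i + c_i$ gives $\beta(i) < \delta(i)$ since $c_i \ge 1$, and reassembling the summands gives the desired isomorphism $\mathcal{B} : \mathbb{I}_{\bc_k(\filtOne)} \xrightarrow{\cong} P\HH_k(\filtOne)$.

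For uniqueness of the triple it is cleanest to argue intrinsically. Write $\operatorname{rk}(\ell \to \ell')$ for the rank of the composite of structure maps $\big(P\HH_k(\filtOne)\big)^\ell \to \big(P\HH_k(\filtOne)\big)^{\ell'}$ for $\ell \le \ell'$; this is an invariant of the persistence module. From any interval decomposition and additivity of rank over direct sums,
\[
\operatorname{rk}(\ell \to \ell') \;=\; \#\{\, \tau \in \bc_k(\filtOne) \;:\; \beta(\tau) \le \ell \text{ and } \ell' < \delta(\tau)\,\},
\]
and a short inclusion–exclusion gives, for all $b < d$,
\[
\#\{\tau : \beta(\tau) = b,\ \delta(\tau) = d\} \;=\; \operatorname{rk}(b \to d-1) - \operatorname{rk}(b-1 \to d-1) - \operatorname{rk}(b \to d) + \operatorname{rk}(b-1 \to d).
\]
Thus the multiset $\{(\beta(\tau), \delta(\tau))\}_{\tau}$ is forced by $P\HH_k(\filtOne)$, which is exactly uniqueness of $(\bc_k(\filtOne), \beta, \delta)$ up to a bijection of $\bc_k(\filtOne)$ intertwining the $\beta$'s and $\delta$'s (any two interval decompositions then differ by precomposing $\mathcal{B}$ with the induced automorphism of the barcode module).

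The argument is routine in outline; the points that need genuine care are the two places where conventions and bookkeeping enter: (i) invoking the \emph{graded} PID structure theorem — one must verify that homogeneous generators and homogeneous relations suffice, which relies on finite generation together with the explicit list of homogeneous ideals of $\field[x]$ — and (ii) matching index conventions so that $\Sigma^{a}\field[x]/(x^{c})$ corresponds to $I[a, a+c)^\bullet$ rather than an off-by-one variant. Neither is deep, but both are where an error would most plausibly creep in.
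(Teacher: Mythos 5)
Your argument is correct and is essentially the standard one from the paper's cited source \cite{compute_PH}: the paper itself gives no proof of this theorem, citing Zomorodian--Carlsson, whose proof proceeds exactly via the graded $\field[x]$-module correspondence and the graded structure theorem, with uniqueness recoverable from the rank invariant as you describe. Your index bookkeeping (identifying $\Sigma^{a}\field[x]/(x^{c})$ with $I[a,a+c)$ under the paper's conventions, and the inclusion--exclusion for the multiplicities) checks out.
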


By abuse, we will usually refer to the barcode $(\bc_k(\filtOne), \beta, \delta)$  using only the name of its underying set, $\bc_k(\filtOne).$ Further, we will often refer to the collection of bars which are \emph{alive} at a particular parameter $\ell \in \mathbb{Z}$ by
 $$\bc_k(\SCOne^\ell) = \{\tau\in \bc_k(\filtOne) \, | \,  \beta(\tau) \leq \ell < \delta(\tau)\}.$$

The \emph{length} or \emph{lifetime} of a bar $\tau \in \bc_k(\filtOne)$ is $\delta(\tau) - \beta(\tau).$ Further, for a filtered topological space with $N$ filtration levels, when $\delta(\tau) = N+1$, it is common to write instead $\delta(\tau) = \infty$ and say that $\tau$ has \emph{infinite} length or lifetime. We will require this convention in Section \ref{cycle_to_cycle_ext} when we discuss a filtration for which the final level is not a contractible space. 

Bars and barcodes are the central actors in our story. Throughout this paper we will carefully think about how bars correspond to homology classes. In particular, it will be important to think about how the vector space structure on homology is reflected by the bars in a barcode. To that end, we require the following terminology regarding persistence modules.

\begin{definition}[\cite{zigzag}] \label{def:persistent_affine}
Let $V^\bullet$ be a persistence module with structure maps $\{\phi^\ell\}$. A \emph{persistent subspace} (respectively, \emph{persistent affine subspace}) of $V^\bullet$ is given by a choice of $\beta < \delta \in \mathbb{Z}$ and $W^\bullet = \bigoplus_{\ell =-\infty}^\infty W^\ell$, where 
\begin{enumerate}
    \item $W^\ell \subseteq V^\ell$ is a subspace (respectively, affine subspace),
    \item $W^\ell = 0$ if $\ell < \beta$ or $\ell \geq \delta$, and
    \item $\phi^\ell(W^\ell) = W^{\ell+1}$ if $\ell \neq \beta-1, \delta-1$. 
\end{enumerate}
\end{definition}

\noindent That is, a persistent (affine) subspace of $V^\bullet$ is a choice of (affine) subspaces of each $V^\ell$ that are consistent with the structure maps and supported on a bounded interval. If $\text{dim}(W^\ell) = d$ for some fixed $d$ whenever $\beta \leq \ell < \delta$, we say $W^\bullet$ is a \emph{persistent $d$-dimensional subspace}.

Observe that given a persistence module $P\HH_k(\filtOne)$, a bar $\tau \in \bc_k(\filtOne)$, and an interval decomposition $\mathcal{B} : \mathbb{I}_{\bc_k(\filtOne)} \to P\HH_k(\filtOne),$ the image of the corresponding interval module $I[\beta(\tau), \delta(\tau))$ under $\mathcal{B}$ is a   persistent 1-dimensional subspace of $P\HH_k(\filtOne)$. Thus, another way to describe an interval decomposition is as a choice of identification of bars in the barcode with persistent 1-dimensional subspaces in the persistence module. Before we move on, it will be useful to take a closer look at what this choice entails.

\subsection{The relationship between bars and homology classes} 
\label{sec:class_correspondence}

Theorem \ref{thm:structure}  describes $P\HH_k(\filtOne)$ as a finitely generated $\mathbb{F}[x]$-module by providing a minimal set of generators for the module and their annihilators. While the barcode which enumerates these pairs of births and deaths is unique, the explicit identification of these generators as elements of the module is usually quite the opposite. Indeed, this is precisely a choice of basis for a finite-dimensional vector space that is compatible with the structure maps of the persistence module. Since we need cycle representatives, we must check that our constructions are invariant under this choice. Thus, we need analogues of the usual change of basis formalism from linear algebra.

\begin{definition}
\label{def:basis_vector_for_bar}
Let $\filtOne$ be a filtered topological space, and let $\bc_k(\filtOne)$ and $\mathcal{B}: \mathbb{I}_{\bc_k(\filtOne)} \to P\HH_k(\filtOne)$ be the barcode and interval decomposition for $P\HH_k(\filtOne)$, made up of the following commutative diagram of vector spaces and linear isomorphisms
\begin{equation}
\begin{tikzcd}
\mathbb{I}_{\bc_k(\filtOne)} \arrow[d, "\mathcal{B}"] & I^1 \arrow[r, rightarrow]  \arrow[d, "\mathcal{B}^1"]&  \cdots \arrow[r, rightarrow] & I^\ell \arrow[r, rightarrow] \arrow[d, "\mathcal{B}^\ell"]&\cdots \arrow[r, rightarrow]  & I^N \arrow[d, "\mathcal{B}^N"] \\
P\HH_k(\filtOne) & \HH_k(\SCOne^1)  \arrow[r, rightarrow]& \cdots \arrow[r, rightarrow]& \HH_k(\SCOne^\ell)  \arrow[r, rightarrow] & \cdots \arrow[r, rightarrow] & \HH_k(\SCOne^{N})
\end{tikzcd}
\end{equation}
where $I^\ell$ is a direct sum of copies of $\mathbb{F}$ indexed over  $\bc_k(\SCOne^\ell)$, 
$$I^\ell = \bigoplus_{\bc_k(\SCOne^\ell)}\mathbb{F}.$$
For each $\ell \in \mathbb{Z}$, let $\{ \vec{e}^{\; \ell}_\tau : \tau \in \bc_k(\SCOne^\ell)\}$ be the standard basis of $\smash{I^{\ell}}$, where $\vec{e}^{\;\ell}_\tau$ is the standard basis vector corresponding to the $\tau$-summand.  We call $\vec{e}^{\;\ell}_{\tau}$ the \emph{basis vector for $\tau$ at $\ell$.} 
\end{definition}
Conceptually, we do not require an ordering on the bars and adding one requires another layer of notation that we would like to avoid. However, when we are working with matrix computations, it will be convenient to set a linear order $\tau_1 < \tau_2 < \dots < \tau_{|\bc_k(\SCOne^\ell)|}$ on the bars alive at parameter $\ell$. In this case, $\smash{\vec{e}^{\; \ell}_{\tau_r}}$ is the vector whose $\smash{r^{\text{th}}}$ component is $1$ and all other components are zero.

We can now set some terminology for the correspondence between bars in $\bc_k(\filtOne)$ and classes in $P\HH_k(\filtOne)$.

\begin{definition}
\label{def:class_correspondence}
Let $\filtOne$ be a filtered topological space, and let $\bc_k(\filtOne)$ and $\IntDecZ:\mathbb{I}_{\bc_k(\filtOne)} \to P\HH_k(\filtOne)$ be the barcode and interval decomposition of $P\HH_k(\filtOne)$. Let $\ell \in \mathbb{Z}$ and $\tau \in \bc_k(Z^{\ell})$. Take $\vec{e}_{\tau}^{\; \ell}$ to be the basis vector for $\tau$ at $\ell$. Write $[\tau^{\IntDecZ, \ell}] = [\IntDecZ^\ell(\vec{e}_{\tau}^{\;\ell})]\in \HH_k(\SCOne^\ell)$ for the class that \emph{$\IntDecZ$-corresponds to $\tau$ at parameter $\ell$} and say that any cycle representative of $[\tau^{\IntDecZ, \ell}]$ is a \emph{$\IntDecZ$-representative of the bar $\tau$ at parameter $\ell$.} 
\end{definition}

Definition \ref{def:class_correspondence} finds the homology class that correspondences to a given bar. Conversely, we can also find bar representations of a given homology class as the following.

\begin{definition} 
\label{def:bar_representation_of_class}
Let $\filtOne$ be a filtered topological space, and let $\bc_k(\filtOne)$ and $\IntDecZ:\mathbb{I}_{\bc_k(\filtOne)} \to P\HH_k(\filtOne)$ be the barcode and interval decomposition of $P\HH_k(\filtOne)$. Given any homology class $[z] \in \HH_k(Z^\ell)$, we say that the collection 
\begin{align*}
S^{\IntDecZ}_{[z]} = \{ c_1\tau_1, \dots, c_K \tau_K \; | & \; \tau_1, \dots, \tau_K \in \bc_k(Z^\ell), \\
& [z] = \sum_{i=1}^Kc_i [\tau^{\IntDecZ, \ell}_{i}] \text{ for nonzero }c_1, \dots, c_K \in \field \}
\end{align*}
is the ($\IntDecZ$-)\emph{bar representation of} $[z]$ (\emph{at} $\ell$). 
\end{definition}
We can visually represent the persistent 1-dimensional subspace generated by a $\IntDecZ$-bar representation $S_{[z]}^{\IntDecZ} = \{c_1\tau_1, \dots, c_K\tau_K \}$ by highlighting the right half-bars of $\tau_1, \dots, \tau_K\in \bc_k(\filtOne)$ starting from parameter $\ell$ (see Figures \ref{fig:trefoil_knot_example} and \ref{fig:PCA_example} in Section \ref{applications} for such visual depictions). When we work with $\field_2$ coefficients where bar representations are in correspondence with various subsets of the bars, this visual representation provides complete information.

The correspondence between bars and homology classes in Definitions \ref{def:class_correspondence} and \ref{def:bar_representation_of_class} depend on the specific interval decomposition $\IntDecZ$. To avoid choice-dependent constructions, we consider all homology classes corresponding to $\tau$ under different interval decompositions of $P\HH_k(\filtOne)$. As noted, interval decompositions are vector space isomorphisms that are compatible with the structure maps. Thus, any interval decomposition $\mathcal{C}$ of $P\HH_k(\filtOne)$ can be obtained from a given interval decomposition $\IntDecZ$ by the composition $\mathcal{C} = \IntDecZ \circ \mathcal{L}$ for some automorphism  $\mathcal{L}: \mathbb{I}_{\bc_k(\filtOne)} \to \mathbb{I}_{\bc_k(\filtOne)}$ of the barcode module, given by the following commutative diagram of vector spaces and linear isomorphisms.

\begin{equation}
\label{diag:barcode_iso}
\begin{tikzcd}
\mathbb{I}_{\bc_k(\filtOne)} \arrow[d, "\mathcal{L}"] & I^1 \arrow[r, rightarrow]  \arrow[d, "\mathcal{L}^1"]&  \cdots \arrow[r, rightarrow] & I^\ell \arrow[r, rightarrow] \arrow[d, "\mathcal{L}^\ell"]&\cdots \arrow[r, rightarrow]  & I^N \arrow[d, "\mathcal{L}^N"] \\
\mathbb{I}_{\bc_k(\filtOne)} & I^1  \arrow[r, rightarrow]& \cdots \arrow[r, rightarrow]& I^\ell \arrow[r, rightarrow] & \cdots \arrow[r, rightarrow] & I^N
\end{tikzcd}
\end{equation}

We will require the following two technical lemmas\footnote{Per Remark \ref{rem:other_work}, a generalized version of these results can be found in \cite{JNT_barcode_bases}.} to work with the set of all such automorphisms in Section \ref{section:bar_to_bars_extension}. 
As their statements involve matrices, we fix linear orderings on each $\bc_k(\SCOne^\ell)$. Let $L^\ell$ denote the corresponding matrix for the linear automorphisms $\mathcal{L}^\ell$, and let $L^\ell_{r,c}$ indicate the matrix entry at row $r$ and column $c$. 

\begin{restatable}{lemma}{basischange} 
\label{lemma:basis_change} 
Given a filtered topological space $\filtOne$, let $\mathcal{L}:\mathbb{I}_{\bc_k(\filtOne)} \to \mathbb{I}_{\bc_k(\filtOne)}$ be an automorphism of its barcode module. Let $\ell \in \mathbb{Z}$, and let $\tau_1, \dots, \tau_{|\bc_k(Z^\ell)|}$ be some ordering of the bars in $\bc_k(Z^\ell)$. If $L^\ell_{r,c} \neq 0$ for $r \neq c$, then the bars $\tau_r$ and $\tau_c$ must satisfy the relations
\begin{equation}
\label{interval_relations}
\beta(\tau_r) \leq \beta(\tau_c)  < \delta(\tau_r) \leq \delta(\tau_c).
\end{equation}
\end{restatable}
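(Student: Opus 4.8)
The plan is to analyze what the commutativity of diagram \eqref{diag:barcode_iso} forces on the matrix $L^\ell$, one filtration step at a time. Recall that the barcode module $\mathbb{I}_{\bc_k(\filtOne)}$ decomposes as a direct sum of interval modules, so each $I^\ell$ has a standard basis $\{\vec{e}^{\;\ell}_\tau : \tau \in \bc_k(Z^\ell)\}$ and the structure map $I^\ell \to I^{\ell+1}$ sends $\vec{e}^{\;\ell}_\tau$ to $\vec{e}^{\;\ell+1}_\tau$ if $\tau$ is alive at $\ell+1$ (i.e. $\delta(\tau) > \ell+1$, equivalently $\delta(\tau) \geq \ell+2$... more precisely if $\ell+1 < \delta(\tau)$) and to $0$ otherwise, and likewise $\vec{e}^{\;\ell}_\tau$ is in the image of the structure map from $I^{\ell-1}$ exactly when $\tau$ is alive at $\ell-1$, i.e. $\beta(\tau) \leq \ell-1$. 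The automorphism $\mathcal{L}$ must intertwine these structure maps.

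First I would fix $r \neq c$ with $L^\ell_{r,c} \neq 0$, so that $\mathcal{L}^\ell(\vec{e}^{\;\ell}_{\tau_c})$ has a nonzero $\vec{e}^{\;\ell}_{\tau_r}$-component. The key step is to push this relation forward and backward along the filtration and read off constraints. Pushing \emph{backward}: I would show $\beta(\tau_r) \leq \beta(\tau_c)$. Suppose not, i.e. $\beta(\tau_r) > \beta(\tau_c)$, so at parameter $\beta(\tau_c)$ the bar $\tau_c$ is alive but $\tau_r$ is not. Then $\vec{e}^{\;\beta(\tau_c)}_{\tau_c} \in I^{\beta(\tau_c)}$, and by commutativity of the diagram the image $\mathcal{L}^{\beta(\tau_c)}(\vec{e}^{\;\beta(\tau_c)}_{\tau_c})$, pushed up to parameter $\ell$ by the structure maps of $\mathbb{I}_{\bc_k(\filtOne)}$, must agree with $\mathcal{L}^\ell$ applied to the image of $\vec{e}^{\;\beta(\tau_c)}_{\tau_c}$ pushed up to $\ell$, which is $\mathcal{L}^\ell(\vec{e}^{\;\ell}_{\tau_c})$; the latter has nonzero $\tau_r$-component. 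But the former is obtained by pushing an element of $I^{\beta(\tau_c)}$ up to $\ell$, and since $\tau_r$ is not alive at $\beta(\tau_c)$, no basis vector $\vec{e}^{\;\beta(\tau_c)}_{\tau}$ can ever acquire a $\tau_r$-component when pushed forward (the structure maps of the barcode module are "diagonal" in the standard basis — they kill summands or carry them identically). This contradiction gives $\beta(\tau_r) \leq \beta(\tau_c)$. Pushing \emph{forward}: by an entirely symmetric argument at parameter $\delta(\tau_r) - 1$ — where $\tau_r$ is alive but if $\delta(\tau_r) > \delta(\tau_c)$ then $\tau_c$ is dead — I would derive $\delta(\tau_r) \leq \delta(\tau_c)$. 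Here I'd use that $\mathcal{L}$ is an \emph{automorphism}: its inverse $\mathcal{L}^{-1}$ also commutes with structure maps, and $(\mathcal{L}^\ell)^{-1}$ has a nonzero $(c,r)$ entry is not automatic, so instead I'd argue directly that $\mathcal{L}^\ell(\vec{e}^{\;\ell}_{\tau_c})$ lies in the image of $I^{\ell - 1} \to I^\ell$ only through summands alive at $\ell - 1$, and track the $\tau_r$-component up to parameter $\delta(\tau_c)$: since $\mathcal{L}$ commutes with structure maps, $\mathcal{L}^{\delta(\tau_c)}$ of the (zero) image of $\vec{e}^{\;\ell}_{\tau_c}$ at $\delta(\tau_c)$ is zero, but the $\tau_r$-component would survive if $\delta(\tau_r) > \delta(\tau_c)$, a contradiction. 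The two strict-vs-nonstrict middle inequalities $\beta(\tau_c) < \delta(\tau_r)$ follow because $\tau_c$ and $\tau_r$ are \emph{both} alive at $\ell$ (that is the standing hypothesis $\tau_r, \tau_c \in \bc_k(Z^\ell)$), so $\beta(\tau_c) \leq \ell < \delta(\tau_r)$.

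I expect the main obstacle to be bookkeeping the compositions of structure maps cleanly — i.e., making rigorous the slogan "a $\tau_r$-component can only appear in the image of basis vectors of bars that are alive at the relevant parameter, because the barcode module's own structure maps are diagonal." The cleanest route is probably to state and use a short preliminary observation: for any $\ell' \leq \ell''$ with both in the common support, the composite structure map $I^{\ell'} \to I^{\ell''}$ of $\mathbb{I}_{\bc_k(\filtOne)}$ sends $\vec{e}^{\;\ell'}_{\tau}$ to $\vec{e}^{\;\ell''}_{\tau}$ if $\ell'' < \delta(\tau)$ and to $0$ otherwise; in particular its image is spanned by $\{\vec{e}^{\;\ell''}_\tau : \beta(\tau) \leq \ell',\ \delta(\tau) > \ell''\}$, and dually, $\vec{e}^{\;\ell''}_\tau$ lies in this image iff $\beta(\tau) \leq \ell'$. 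Once that is in hand, both directions are a two-line diagram chase, and the strict inequality in the middle is immediate from the hypothesis that both bars are alive at $\ell$.
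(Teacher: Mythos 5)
Your proposal is correct and follows essentially the same argument as the paper: a diagram chase through the interval decomposition, exploiting that the barcode module's structure maps are diagonal in the standard basis, pushing backward to a parameter where $\tau_c$ is alive but $\tau_r$ is not (you use $\beta(\tau_c)$, the paper uses $\beta(\tau_r)-1$; both work) and forward to $\delta(\tau_c)$ where the image of $\vec{e}^{\;\ell}_{\tau_c}$ vanishes while the $\tau_r$-component would survive. The paper phrases the same chase as matrix-entry computations $(L^\ell\cdot\iota)_{r,c_*}=(\iota\cdot L^{\psi})_{r,c_*}$ rather than as an element-level contradiction, but the content is identical.
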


\begin{proof}
Deferred to Appendix \ref{pf:lemma:basis_change}.
\end{proof}


\begin{restatable}{lemma}{diagonal}
\label{lemma:diagonal} 
Given a filtered topological space $\filtOne$, assume that if  $\tau \neq \tau' \in \bc_k(\filtOne)$ then either $\beta(\tau) \neq \beta(\tau')$ or $\delta(\tau) \neq \delta(\tau')$. Let $\mathcal{L}:\mathbb{I}_{\bc_k(\filtOne)} \to \mathbb{I}_{\bc_k(\filtOne)}$ be an automorphism of the barcode module. Then $L^\ell_{j,j} \neq 0$ for every $1 \leq j \leq |\bc_k(Z^\ell)|$ and $\ell \in \mathbb{Z}$. 
\end{restatable}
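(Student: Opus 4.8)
The goal is to show that, under the hypothesis that distinct bars have distinct birth-death pairs, every diagonal entry $L^\ell_{j,j}$ of the matrix of an automorphism $\mathcal{L}^\ell$ is nonzero. The natural strategy is to combine the structural constraint from Lemma \ref{lemma:basis_change} with the fact that $L^\ell$ is invertible. First I would fix $\ell$ and suppose, for contradiction, that $L^\ell_{j,j} = 0$ for some index $j$. Since $L^\ell$ is invertible, its $j$-th column is nonzero, so there is some row $r \neq j$ with $L^\ell_{r,j} \neq 0$. By Lemma \ref{lemma:basis_change}, this forces $\beta(\tau_r) \leq \beta(\tau_j) < \delta(\tau_r) \leq \delta(\tau_j)$; since the birth-death pairs are distinct, at least one of these inequalities is strict, so $(\beta(\tau_r), \delta(\tau_r))$ is ``strictly to the left'' of $(\beta(\tau_j),\delta(\tau_j))$ in the sense that $\beta(\tau_r) \le \beta(\tau_j)$ and $\delta(\tau_r) \le \delta(\tau_j)$ with not both equalities.

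\textbf{Key steps.} The heart of the argument is a descent/minimality argument on this partial order. Define a strict partial order $\prec$ on $\bc_k(Z^\ell)$ by declaring $\tau \prec \tau'$ whenever $\beta(\tau) \le \beta(\tau')$ and $\delta(\tau) \le \delta(\tau')$ but $\tau \ne \tau'$ (this is antisymmetric precisely because of the distinctness hypothesis, and transitive). Among all indices $j$ with $L^\ell_{j,j} = 0$ — assuming this set is nonempty — pick one that is $\prec$-minimal, i.e. such that $\tau_j$ is minimal in $\{\tau_i : L^\ell_{i,i} = 0\}$ with respect to $\prec$. The argument above produces $r \ne j$ with $L^\ell_{r,j} \ne 0$ and $\tau_r \prec \tau_j$. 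By minimality of $\tau_j$ we must have $L^\ell_{r,r} \ne 0$. Now I would like to "clear" this off-diagonal entry to reduce the number of zero diagonal entries: consider the automorphism $\mathcal{L}' = \mathcal{L} \circ \mathcal{E}$, where $\mathcal{E}$ is the elementary barcode-module automorphism adding a multiple of the $r$-summand interval module to the $j$-summand — one must check that $\mathcal{E}$ is a genuine barcode-module automorphism, which is exactly where the relation $\beta(\tau_r) \le \beta(\tau_j) < \delta(\tau_r) \le \delta(\tau_j)$ is needed (the map $I[\beta(\tau_r),\delta(\tau_r)) \to I[\beta(\tau_j),\delta(\tau_j))$ commuting with structure maps requires exactly these inequalities). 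Choosing the scalar as $-L^\ell_{j,j}/L^\ell_{r,r}$ at level $\ell$... but wait, $L^\ell_{j,j} = 0$ already, so this particular column operation changes column $j$ by adding a multiple of column $r$, and since $L^\ell_{r,r} \ne 0$ while the target diagonal slot we are filling is $(j,j)$... Actually the cleaner route: do a \emph{row} operation, replacing row $j$ by row $j$ minus $(L^\ell_{j,r}/L^\ell_{r,r})$ times row $r$ — this is post-composition by an elementary automorphism on the left, which is a barcode-module automorphism for the same reason. This kills $L^\ell_{j,r}$ without touching $L^\ell_{r,r}$ or introducing new zero diagonal entries, and one can track carefully that after finitely many such reductions one reaches a contradiction with invertibility (a triangular-with-a-zero-on-the-diagonal matrix is singular).

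\textbf{Alternative, cleaner approach.} Rather than iterative clearing, I expect the slicker argument is: order the bars so that $\tau_i \prec \tau_j$ implies $i < j$ (a linear extension of $\prec$, which exists since $\prec$ is a finite strict partial order). By Lemma \ref{lemma:basis_change}, $L^\ell_{r,c} \ne 0$ with $r \ne c$ forces $\beta(\tau_r) \le \beta(\tau_c) < \delta(\tau_r) \le \delta(\tau_c)$; combined with the distinctness hypothesis this gives $\tau_r \prec \tau_c$, hence $r < c$. Therefore $L^\ell$ is \emph{upper triangular} in this ordering. An upper triangular invertible matrix has all diagonal entries nonzero, so $L^\ell_{i,i} \ne 0$ for all $i$; since permuting the basis conjugates $L^\ell$ and does not change which entries are on the diagonal being nonzero — more precisely, the original ordering's diagonal entries are a permutation-conjugate and remain nonzero — we conclude $L^\ell_{j,j} \ne 0$ for every $j$ in the original ordering as well. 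This works uniformly in $\ell$, completing the proof.

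\textbf{Main obstacle.} The delicate point is passing from Lemma \ref{lemma:basis_change}'s non-strict chain of inequalities to the strict partial order: I need the distinctness hypothesis to guarantee that $L^\ell_{r,c}\neq 0$ with $r\neq c$ truly implies $\tau_r \ne \tau_c$ as birth-death pairs, so that the relation $\prec$ is a genuine (antisymmetric) partial order admitting a linear extension. A secondary subtlety is the bookkeeping about the ordering: the lemma's statement fixes ``some ordering'' of the bars, so I should either observe the conclusion is ordering-independent (being upper-triangular in one ordering, the diagonal entries are nonzero; re-sorting is a simultaneous row and column permutation, which sends the diagonal to the diagonal) or simply note that it suffices to prove it for the convenient linear extension ordering and that the claim ``$L^\ell_{j,j}\neq 0$ for all $j$'' is invariant under such re-indexing. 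Neither obstacle is serious, but the argument should be written to make the use of the distinctness hypothesis explicit, since without it the conclusion genuinely fails (e.g. a $2\times 2$ block swapping two identical interval modules).
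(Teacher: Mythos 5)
Your ``alternative, cleaner approach'' is correct and is essentially the paper's own proof: the paper orders the bars by decreasing birth parameter (ties broken by decreasing death), deduces from Lemma \ref{lemma:basis_change} that $L^\ell$ is triangular in that ordering, and then concludes the diagonal entries are nonzero from invertibility (via an explicit inductive linear-dependence argument rather than the ``triangular invertible matrix'' fact), with the reordering handled as a harmless without-loss-of-generality exactly as you note. Your first descent-and-clearing sketch is incomplete, but since you explicitly designate the triangularization argument as the intended proof, the proposal matches the paper's route.
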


\begin{proof}
Deferred to Appendix  \ref{pf:lemma:diagonal}. 
\end{proof}

In Section \ref{section:bar_to_bars_extension}, we will be enumerating matrices of isomorphism $\mathcal{L}^\ell$ that extend to a barcode module automorphism $\mathcal{L}$. The following lemma will be useful.

\begin{restatable}{lemma}{linindep}
\label{lemma:linear_independence}
Let $\bc_k(\filtOne)$ be the barcode of some persistence module $P\HH_k(\filtOne)$. Assume that $\beta(\tau) \neq \beta(\tau')$ or $\delta(\tau) \neq \delta(\tau')$ for all $\tau \neq \tau' \in \bc_k(Z^\bullet)$. Given $\ell \in \mathbb{Z}$, let $\tau_1, \dots, \tau_{|\bc_k(Z^\ell)|}$ be the bars in $\bc_k(Z^\ell)$. For $1 \leq t \leq |\bc_k(Z^\ell)|$, let $L \in \field^{|\bc_k(Z^\ell)| \times t}$ be any matrix whose entries satisfy
\begin{equation}
\label{eq:matrixL}
L_{r,r} \neq 0 \text{ and } L_{r, c} = 0 \text{ if } \beta(\tau_r) > \beta(\tau_c) \text{ or } \delta(\tau_r) > \delta(\tau_c) \text{ for all }r, c.
\end{equation}
Then, the columns of $L$ are linearly independent. 
\end{restatable}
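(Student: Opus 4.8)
The plan is to recognize that the sparsity condition \eqref{eq:matrixL} encodes a partial order on the bars alive at $\ell$, and then run a standard ``triangularity'' argument. Define a relation $\preceq$ on the finite set $\{\tau_1, \dots, \tau_{|\bc_k(Z^\ell)|}\}$ by declaring $\tau \preceq \tau'$ whenever $\beta(\tau) \leq \beta(\tau')$ and $\delta(\tau) \leq \delta(\tau')$. The relation $\preceq$ is visibly reflexive and transitive, and the hypothesis that distinct bars alive at $\ell$ differ in birth or in death makes it antisymmetric, so $\preceq$ is a genuine partial order. In this language, the contrapositive of \eqref{eq:matrixL} reads: $L_{r,c} \neq 0$ implies $\tau_r \preceq \tau_c$; and the diagonal hypothesis says $L_{r,r} \neq 0$ for $1 \leq r \leq t$ (these are legitimate entries since $t \leq |\bc_k(Z^\ell)|$).

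Next I would argue by contradiction. Suppose a nontrivial relation $\sum_{c=1}^t a_c L_{\bullet, c} = \vec{0}$ holds among the columns of $L$, and let $S = \{c : a_c \neq 0\}$, which is nonempty. Since $\{\tau_c : c \in S\}$ is a nonempty finite subset of the poset $(\{\tau_i\}, \preceq)$, it contains a maximal element; write $\tau_{c_0}$ for it, with $c_0 \in S$. Reading off the $c_0$-th coordinate of the relation gives $\sum_{c \in S} a_c L_{c_0, c} = 0$. For any $c \in S$ with $c \neq c_0$, if $L_{c_0, c} \neq 0$ then $\tau_{c_0} \preceq \tau_c$; since $c \mapsto \tau_c$ is injective we have $\tau_{c_0} \neq \tau_c$, so this would contradict the maximality of $\tau_{c_0}$ in $\{\tau_c : c \in S\}$. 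Hence $L_{c_0, c} = 0$ for every such $c$, and the coordinate identity collapses to $a_{c_0} L_{c_0, c_0} = 0$. Because $c_0 \leq t$, the diagonal entry $L_{c_0, c_0}$ is nonzero, forcing $a_{c_0} = 0$ and contradicting $c_0 \in S$. Therefore no nontrivial relation exists and the columns of $L$ are linearly independent.

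This is essentially a bookkeeping exercise once the right viewpoint is fixed, so I do not anticipate a serious obstacle. The one point that requires care is to extract a \emph{maximal} (rather than minimal) element of $\{\tau_c : c \in S\}$ and then to invoke antisymmetry of $\preceq$ — which is precisely where the ``distinct birth-or-death'' hypothesis is used — to conclude that $\tau_{c_0} \preceq \tau_c$ forces $c = c_0$. One could alternatively pass to a linear extension of $\preceq$, reindex so that the relevant $t$ columns form a submatrix that is upper triangular with nonzero diagonal, and quote the usual fact about triangular matrices; but the maximal-element formulation avoids introducing the extra permutation notation and handles the non-square shape of $L$ directly.
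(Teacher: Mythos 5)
Your proof is correct and uses essentially the same idea as the paper's: the sparsity condition \eqref{eq:matrixL} makes $L$ triangular with respect to the partial order on bars given by comparing births and deaths, and then one eliminates coefficients one at a time. The paper reorders the bars (decreasing birth, then decreasing death) to get a literally lower-triangular matrix and inducts on coordinates, whereas your maximal-element formulation does the same elimination without reindexing, which handles the non-square case a bit more cleanly.
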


\begin{proof}
Deferred to Appendix \ref{pf:lemma:linear_independence}.\end{proof} 
To apply the above lemmas, we will need to assume that all bars have unique birth-death pairs. 
To ensure that the enumeration of $\mathcal{L}^\ell$ is finite, we assume that we are working over a finite field. 
If we further restrict our attention to homology with $\mathbb{F}_2$-coefficients, as is common in applied topology, our persistent affine subspaces will correspond to the power set of some subset of the bars in a barcode. This is appealing both for intuition building and from the perspective of studying data visually using these tools.

\ul{Therefore, for the remainder of the paper we will assume that all fields are finite and all bars have unique birth-death pairs. In addition, our computational examples will use $\mathbb{F}_2$-coefficients. In all cases, we will suppress the field from our notation unless we need to refer to it specifically.} We will occasionally recall these assumptions when they are pertinent for our discussion.

\subsection{Birth and death parameters of homology classes.}

The barcode of a persistence module has been the primary method for characterizing its structure in practice, so it is common to discuss the birth and death time of bars. The same terminology is often applied to persistent homology classes only  informally to avoid dealing directly with the interval decomposition. However, to work with these classes algorithmically we will need to make this terminology precise. Thus, we will use the correspondence between bars and homology classes in Definition \ref{def:class_correspondence} to state and describe how to compute the birth and death parameters of a class $[w]\in \HH_k(\SCOne^\ell).$

\begin{definition}
\label{def:cycle_birth_death}
Let $P\HH_k(Z^\bullet)$ be a persistence module with structure maps $\{\phi^\ell \}$. Given a class $[w] \in \HH_k(Z^\ell)$, let $W^\bullet$ be the persistent subspace of $P\HH_k(Z^\bullet)$ satisfying the following:
\begin{enumerate}
    \item $W^\ell = \text{span}_\field \Big\{[w] \Big\}$
    \item $W^i = \text{span}_\field \Big\{ [x] \in \HH_k(Z^i) \; | \; [w] = \phi^{\ell-1} \circ \cdots \circ \phi^{i} ([x]) \Big\} $ for $i < \ell$
    \item $W^i = \phi^{i-1}(W^{i-1})$ for $i > \ell.$
\end{enumerate}
    The \emph{birth} and \emph{death} parameters of $[w]$ are given by $\beta([w]) = \min \{i \leq \ell \; | \; W^i \neq 0 \} $ and $\delta([w])= \min \{i > \ell \; | \; W^i =0 \}.$ 
\end{definition}

That is, $W^\bullet$ is the maximal persistent subspace of $P\HH_k(Z^\bullet)$ containing $[w]$. To compute the birth and death parameters, we use the following lemma.  

\begin{restatable}{lemma}{classbirthdeath}
\label{lemma:class_birth_death}
Given a persistence module $P\HH_k(\filtOne)$, let $\mathcal{B}: \mathbb{I}_{\bc_k(\filtOne)} \to P\HH_k(\filtOne)$ be any interval decomposition. Given $[w] \in \HH_k(Z^\ell)$, express $[w]$ using the bars alive at $\ell$ as $[w] = \sum_{\tau \in \bc_k(Z^\ell)} f_\tau [\tau^{\mathcal{B}, \ell}]$ for some coefficients $f_\tau \in \field$. The birth and death times of $[w]$ are given by $\max \{ \beta(\tau) \; | \; \tau \in \bc_k(Z^\ell) \text{ and } f_\tau \neq 0\}$ and $\max \{ \delta(\tau) \; | \; \tau \in \bc_k(Z^\ell) \text{ and }f_\tau \neq 0 \},$ respectively.
\end{restatable}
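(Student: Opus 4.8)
The plan is to exploit the barcode module decomposition to reduce the claim to a direct computation with interval modules. Fix an interval decomposition $\mathcal{B}: \mathbb{I}_{\bc_k(\filtOne)} \to P\HH_k(\filtOne)$ and the class $[w] \in \HH_k(Z^\ell)$. The first step is to transport the class $[w]$ through $\mathcal{B}^{-1}$ to an element $\vec{w} = \sum_{\tau \in \bc_k(Z^\ell)} f_\tau \vec{e}^{\; \ell}_\tau$ of $I^\ell = \bigoplus_{\bc_k(Z^\ell)} \field$, so that the persistent subspace $W^\bullet$ of Definition \ref{def:cycle_birth_death} corresponds, under $\mathcal{B}$, to the analogously-defined maximal persistent subspace $\widetilde{W}^\bullet$ of the barcode module containing $\vec w$. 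Since $\mathcal{B}$ is an isomorphism of persistence modules, it commutes with the structure maps and hence carries the formation of $\widetilde W^\bullet$ (via conditions (1)--(3) of Definition \ref{def:cycle_birth_death}) onto that of $W^\bullet$; in particular $\beta([w]) = \beta(\vec w)$ and $\delta([w]) = \delta(\vec w)$ computed in the barcode module. This reduces everything to the purely combinatorial setting of $\mathbb{I}_{\bc_k(\filtOne)}$.

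Next I would compute $\beta(\vec w)$ and $\delta(\vec w)$ directly in the barcode module, where the structure maps act diagonally on the summands $I[\beta(\tau),\delta(\tau))$. For the death parameter: applying $\phi^{\ell} \circ \cdots \circ \phi^{i-1}$ to $\vec w$ simply kills the $\tau$-component once $i$ reaches $\delta(\tau)$ and preserves it otherwise, so the forward image $\widetilde W^i$ for $i > \ell$ is nonzero exactly when some $\tau$ with $f_\tau \neq 0$ still has $i < \delta(\tau)$; hence $\delta(\vec w) = \max\{\delta(\tau) : \tau \in \bc_k(Z^\ell),\ f_\tau \neq 0\}$. For the birth parameter one argues in the opposite direction: a class $[x] \in I^i$ (for $i < \ell$) maps forward to $\vec w$ iff its $\tau$-component equals $f_\tau$ for every $\tau$ with $\beta(\tau) \le i$ and $f_\tau = 0$ is forced for every $\tau$ with $\beta(\tau) > i$; such a preimage exists precisely when $f_\tau = 0$ for all $\tau$ with $\beta(\tau) > i$, i.e. when $i \ge \max\{\beta(\tau) : f_\tau \neq 0\}$. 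Taking the minimal such $i$ gives $\beta(\vec w) = \max\{\beta(\tau) : \tau \in \bc_k(Z^\ell),\ f_\tau \neq 0\}$. Combining with the previous paragraph yields the stated formulas; one should also note $[w] \neq 0$ (otherwise the ``max over nonempty set'' statement is vacuous), which is implicit since the formulas only range over $\tau$ with $f_\tau \neq 0$.

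The main obstacle is being careful about the direction of the argument for $\beta$: unlike $\delta$, which is read off from a single forward orbit, condition (2) of Definition \ref{def:cycle_birth_death} quantifies over \emph{all} preimages at each earlier level, and one must verify both that a preimage exists at level $i = \max\{\beta(\tau) : f_\tau \neq 0\}$ (easy: take the vector with the same coefficients $f_\tau$ on those $\tau$ that are alive at $i$, which are exactly the relevant ones) and that no preimage exists below that level (because any preimage would have to carry a nonzero $\tau$-component for the $\tau$ achieving the max, but that summand of the barcode module vanishes below $\beta(\tau)$). A secondary technical point is confirming that the isomorphism $\mathcal{B}$ genuinely intertwines the subspace constructions of Definition \ref{def:cycle_birth_death}; this is routine since $\mathcal{B}$ is a graded isomorphism commuting with all $\phi^\ell$, so images and preimages of subspaces correspond bijectively.
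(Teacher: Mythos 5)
Your proof is correct. Note that the paper itself does not supply a proof of this lemma --- it explicitly leaves it ``as an exercise to the interested reader'' --- so there is nothing to compare against; your route (transport $[w]$ through $\mathcal{B}^{-1}$ to the barcode module, where the structure maps act summand-wise on the interval modules, and read off the forward orbit for $\delta$ and the preimage condition for $\beta$) is the natural argument and handles the one genuinely delicate point, namely that condition (2) of Definition \ref{def:cycle_birth_death} quantifies over preimages rather than a single forward image. Your side remarks --- that $[w]\neq 0$ must be assumed implicitly, and that the intertwining of the subspace constructions by $\mathcal{B}$ needs (routine) verification --- are both apt.
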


Note that even though we fix an interval decomposition $\mathcal{B}$ for the computation, the birth and death times of $[w]$ is independent of the choice of $\mathcal{B}$. The proof of this lemma is not particularly enlightening to read, so we leave it as an exercise to the interested reader.

\subsection{Clique and witness complexes}
\label{clique_witness}
In this paper we restrict our attention to two special classes of filtered topological spaces: filtered \emph{clique complexes} and filtered \emph{witness complexes}. The former is the standard tool for constructing simplicial complexes from data represented by a collection of pairwise dissimilarity measures among elements of a population. The ubiquitous example is the filtered Vietoris-Rips complex of a point cloud in a metric space. The latter is less commonly used but naturally encodes common cross-population dissimilarity measures in a fashion that is compatible with the usual applications of the clique complex.

\begin{definition}
Let $A$ be an $(n \times n)$ matrix with zero diagonal and $N$ unique non-negative real off-diagonal entries $\alpha_1 < \dots < \alpha_N$. The \emph{(filtered) clique complex} for $A$ is the filtered simplicial complex $X(A)^\bullet$
where $X(A)^\ell$ has vertices $[n] = \{1, \dots, n\}$ and faces $\{\sigma \subseteq 2^{[n]}\; :\; \max_{\{r,c\}\in\sigma}A_{r,c} \leq \alpha_\ell\}.$
\end{definition}

Our standard context for constructing a clique complex will involve a population $P = \{p_1, \dots p_n\}$ endowed with a pairwise dissimilarity matrix $M_P$. In this case, we will write $X_P^\bullet$ for $X(M_P)^\bullet.$

\begin{definition}[\cite{Dowker, Witness, functorial_dowker}]
Let $B$ be an $(n \times m)$ matrix with $N$ unique non-negative real entries $\beta_1 < \dots < \beta_N$.  The \emph{(filtered) witness complex} of $B$ is the filtered simplicial complex $W(B)^\bullet$ where $W(B)^\ell$ has vertices $[n] = \{1, \dots, n\}$ and faces $\{\{r \in [n] : B_{r,c} \leq \beta_\ell\} \;:\;  c \in [m]\}.$ We refer to the set $[n]$ as the set of \emph{landmarks} and $[m]$ as the set of \emph{witnesses}.
\end{definition}

Note that some of the faces of $W(B)^\ell$ may be duplicates or subfaces of other faces. When working with two populations $P = \{p_1, \dots p_n\}$ and $Q = \{q_1, \dots q_m\}$ equipped with a $(n \times m)$ cross-dissimilarity matrix $M_{P,Q}$, we will write $W_{P, Q}^\bullet$ for $W(M_{P,Q})^\bullet.$

The apparent asymmetry between rows and columns -- the sets of landmarks and witnesses -- in the construction may seem off-putting: there's no intrinsic reason to choose one of $P$ or $Q$ as the vertex set. Fortunately, this choice no longer matters once we pass to persistent homology.

\begin{theorem}[Functorial Dowker Theorem \cite{Dowker, functorial_dowker}, paraphrased]\label{thm:dowker} Let $B$ be an $(n \times m)$ matrix with non-negative real entries. Then
$P\HH_k(W(B)^\bullet) \cong P\HH_k(W(B^T)^\bullet)$
as persistence modules for every dimension $k$. In particular $\bc_k(W(B)^\bullet) \cong \bc_k(W(B^T)^\bullet).$ 
\end{theorem}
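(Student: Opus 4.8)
\textbf{Proof proposal for the Functorial Dowker Theorem (Theorem \ref{thm:dowker}).}

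The plan is to reduce the statement to the known Functorial Dowker Theorem of \cite{functorial_dowker}, since the paper's witness complex is essentially a reindexed Dowker complex. First I would recall the Dowker construction: given an $(n \times m)$ matrix $B$ with entries in $\R_{\geq 0}$ and a threshold $t$, the Dowker complex $D(B, t)$ on the landmark set $[n]$ has a simplex $\sigma \subseteq [n]$ whenever there is a common witness $c \in [m]$ with $B_{r,c} \leq t$ for all $r \in \sigma$. I would observe that $W(B)^\ell$ as defined in the excerpt is exactly the simplicial complex generated by the maximal faces $\{ r : B_{r,c} \leq \beta_\ell \}$, $c \in [m]$ — that is, $W(B)^\ell = D(B, \beta_\ell)$ (the Dowker complex is downward closed, so taking the faces "indexed by $Q$" and closing under subsets gives the same thing). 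Dually, $W(B^T)^\ell = D(B^T, \beta_\ell)$, the Dowker complex with the roles of landmarks and witnesses swapped. Note that $B$ and $B^T$ have the same set of distinct entries, so both filtrations are indexed by the same finite sequence $\beta_1 < \dots < \beta_N$, which keeps the two persistence modules indexed compatibly over $\Z$.

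Next I would invoke the main result of \cite{functorial_dowker}: for each fixed threshold $t$ there is a homotopy equivalence (indeed, a canonical zig-zag through the Dowker nerve, or the explicit map constructed there) $D(B,t) \simeq D(B^T, t)$, and crucially this equivalence is \emph{natural} in $t$ — that is, it commutes (up to homotopy, hence strictly on homology) with the inclusions $D(B,t) \hookrightarrow D(B,t')$ and $D(B^T,t) \hookrightarrow D(B^T,t')$ for $t \leq t'$. Applying $\HH_k(-;\field)$ to this natural transformation of functors from the totally ordered threshold set to topological spaces yields, level by level, isomorphisms $\HH_k(W(B)^\ell) \xrightarrow{\cong} \HH_k(W(B^T)^\ell)$ that commute with all structure maps. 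By definition this is precisely an isomorphism of persistence modules $P\HH_k(W(B)^\bullet) \cong P\HH_k(W(B^T)^\bullet)$. The statement about barcodes is then immediate from Theorem \ref{thm:structure}: isomorphic persistence modules have the same interval decomposition up to isomorphism, so $\bc_k(W(B)^\bullet) \cong \bc_k(W(B^T)^\bullet)$.

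The main obstacle — and the only real content beyond bookkeeping — is establishing the \emph{naturality} of the level-wise Dowker isomorphism, i.e.\ that the chosen homotopy equivalences assemble into a natural transformation rather than a mere collection of unrelated isomorphisms. This is exactly what distinguishes the functorial Dowker theorem of \cite{functorial_dowker} from the classical Dowker duality of \cite{Dowker}, and it is the reason I would cite the former rather than reproving it: the argument there uses the barycentric-subdivision/nerve model and a careful simplicial or zig-zag construction to get naturality, which is delicate but already in the literature. The remaining points I would need to check are purely routine: (i) that $W(B)^\ell$ and $D(B,\beta_\ell)$ literally coincide as simplicial complexes, including the edge case of duplicate or redundant witness faces (harmless, since we take the generated complex); (ii) that $B$ and $B^T$ produce filtrations with the same index set $\{\beta_1,\dots,\beta_N\}$ so that ``isomorphism of persistence modules'' makes sense without reindexing; and (iii) that applying the homology functor to a natural transformation of space-valued functors gives a natural transformation of persistence modules, which is formal. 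With those in hand, the theorem follows.
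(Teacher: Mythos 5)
Your proposal is correct and matches the paper's treatment: the paper does not prove this theorem but cites it as a paraphrase of the functorial Dowker theorem of \cite{Dowker, functorial_dowker}, exactly the reduction you carry out. Your additional bookkeeping (identifying $W(B)^\ell$ with the generated Dowker complex at threshold $\beta_\ell$, noting that $B$ and $B^T$ share the same set of distinct entries, and passing naturality through $\HH_k$) is the right justification for why the cited result applies verbatim.
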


This result is remarkable: it states that the roles of landmark and witness are interchangeable from a topological perspective. Dowker's theorem was originally stated for unweighted complexes, and it has been extended to the persistent homology setting \cite{functorial_dowker}. Further, the isomorphism is explicit, so we can compute it.  We will leverage this perspective shift as a foundation for comparing barcodes across populations \footnote{Throughout this paper, we will use the term \emph{Dowker's theorem} to refer to both the original theorem and the functorial theorem.}. 

\medskip
Note for both $X_P^\bullet$ and $W_{P,Q}^\bullet$, the corresponding persistence modules  are necessarily finitely generated. Therefore, their homology groups are finite-dimensional vector spaces. Further, observe that both $X_P^\bullet$ and $W_{P,Q}^\bullet$ are contractible for large enough filtration parameters. Since we are working with reduced homology, neither construction admits a bar $\tau$ with $\delta(\tau) = \infty.$

\section{Understanding the problem}
\label{approaches}

Now that we have the appropriate language, we pause to discuss more thoroughly the motivation for this work. Recall that we are interested in comparing two populations, $P = \{p_1, \dots, p_n\}$ and $Q = \{q_1, \dots, q_m\}$; for example, $P$ and $Q$ can represent point clouds sampled from some distributions on metric spaces or agents in some complex systems. We will take as our data three matrices: $M_P$ and $M_Q$, symmetric non-negative dissimilarity matrices for $P$ and $Q$, and $M_{P,Q}$, a non-negative $(n \times m)$ matrix whose $(r,c)$ entry measures dissimilarity between $p_r \in P$ and $q_c \in Q$. 

Suppose now that we compute the degree $k$ persistent homology of the clique complexes $X_P^\bullet$ and $X_Q^\bullet$ and obtain barcodes $\bc_k(X^\bullet_P)$ and $\bc_k(X^\bullet_Q).$ Given a bar of interest $\tau \in \bc_k(X^\bullet_P),$ corresponding to some 1-dimensional persistent subspace of $P\HH_k(X^\bullet_P)$, how can we determine which bars in $\bc_k(X^\bullet_Q)$ represent topological features similar to $\tau$, as measured by $M_{P,Q}$?

Perhaps the most straightforward option is to apply recently developed methods for comparing barcodes using the functoriality of homology, including induced matching \cite{induced_matching, PD_as_D}, cycle registration \cite{cycle_registration}, basis-independent partial matching \cite{basis_indep_matching}, or quiver-representations of correspondences \cite{sampled_persistence}. In this section, we take $P$ and $Q$ as point clouds in a common metric space to explore why these methods may not provide the answers a practitioner might expect or desire.

\subsection{The problem of comparing clique complexes on $P$ and $Q$}

First, let us briefly recall how the above-mentioned methods work. Given a map of persistence modules $f: U^\bullet \to V^\bullet$, \emph{induced matching} \cite{induced_matching, PD_as_D, Bauer_2021} factorizes $f$ as $U^\bullet \twoheadrightarrow \left(\text{im } f\right)^\bullet \hookrightarrow V^\bullet$ and defines a matching between $\bc_k(U^\bullet)$ and $\bc_k(V^\bullet)$ via canonical injections through $\bc_k((\text{im }f)^\bullet)$. The canonical injections match bars only when their endpoints are aligned.
\emph{Cycle registration} \cite{cycle_registration} uses induced matching to find matchings between $\bc_k(U^\bullet)$ and $\bc_k(V^\bullet)$ given maps of the form $U^\bullet \to Y^\bullet \leftarrow V^\bullet$. \emph{Basis-independent partial matching} \cite{basis_indep_matching}, on the other hand, considers the map $f: U^\bullet \to V^\bullet$ as a commutative diagram and reports the dimension of the appropriate subspace that corresponds to given bars in $\bc_k(U^\bullet)$ and $\bc_k(V^\bullet)$. Lastly, if $X$ and $Y$ are topological spaces with a continuous map $g: X \to Y$ and $P$ is a finite sample of $X$, then one can use \emph{quiver-representations of correspondences} \cite{sampled_persistence} to approximate $g_* : H_k(X) \to H_k(Y)$ from the sampled data $g|_P$. We can adapt this construction to the context of persistent homology by considering maps of persistence modules $PH_k(X^\bullet) \leftarrow PH_k(G^\bullet) \to PH_k(Y^\bullet),$ where $G^\bullet$ represents a filtered simplicial complex associated to the graph of the sampled map $g|_P$.

When we have access to explicit maps on persistence modules or maps on finite samples of topological spaces, these methods provide the best possible comparison of topological features; they recapitulate the usual functoriality on homology in the context of barcodes. However, we often build our persistence modules using samples from unknown distributions on unknown spaces, so the required maps of persistence modules of the form $U^\bullet \to V^\bullet$ or $U^\bullet \rightarrow Y^\bullet \leftarrow V^\bullet$ are not known. Indeed, if the point clouds $P$ and $Q$ represent time series data on two different systems, there may not be an explicit map between $P$ and $Q$ at all. 

In simple cases, we might still attempt to impute the necessary maps from the data. If $P = \{p_1, \dots, p_n\}$ and $Q = \{q_1, \dots q_m\}$ are sampled from a common metric space, then the metric provides enough information to na\"ively construct a diagram of persistence modules as
\begin{equation}
\label{map_pm}
P\HH_k(X_P^\bullet) \rightarrow P\HH_k(X_{P\cup Q}^\bullet) \leftarrow P\HH_k(X_Q^\bullet).
\end{equation}
However, attempts to apply the above-listed methods to this diagram suffer from two major issues.

First, the lifetimes of the topological features in $P$ and $Q$ must be very closely aligned in the parameter range. Consider two probability distributions on an annulus illustrated in green on the top of Figure \ref{fig:union_barcodes}. Let $P$ and $Q$ be samples from the two probability distributions. Because they surround the hole in the middle of the annulus, we intuit that the two samples share a common topological feature. However, as the bottom panel in as Figure \ref{fig:union_barcodes} illustrates, the bars in $\bc_1(X_{P\cup Q}^\bullet)$ and $\bc_1(X_Q^\bullet)$ are not aligned because the scale of the two point clouds is different. As a result, existing methods will fail to identify the longest bar in $\bc_1(X_P^\bullet)$ with the longest bar in $\bc_1(X_Q^\bullet)$, even though intuition suggests they represent the same feature. When working with measurements taken from multiple sources, different calibrations of instruments or small changes in experimental conditions could easily result in these kinds of shifts in scale.

\begin{figure}[t]
\centering
\floatbox[{\capbeside\thisfloatsetup{capbesideposition={right,top},capbesidewidth=0.5\textwidth, capbesidesep=mysep}}]{figure}[\FBwidth]
{\caption{
\textbf{Complexities arise in applying induced matching with related point clouds.} (top) Point clouds $P$, $Q$, $R$ sampled on an annulus. $P$ is sampled from the distribution in green concentrated around the inner boundary, while $Q$ and $R$ are sampled from identical, centrally concentrated distributions. (bottom) Barcodes in dimension 1 for the Vietoris-Rips complexes of the point clouds $P$, $P\cup Q$, $Q$, $Q \cup R$, and $R$. Because the unique bar in $\bc_1(X_Q^\bullet)$ is misaligned with the others, induced matching through the unions will fail to identify the feature in $Q$ with non-trivial features in either $P$ or $R$. Blue bars indicate the only possible induced match between $Q \cup R$ and $R.$ }
\label{fig:union_barcodes}}
{\includegraphics[width=0.41\textwidth]{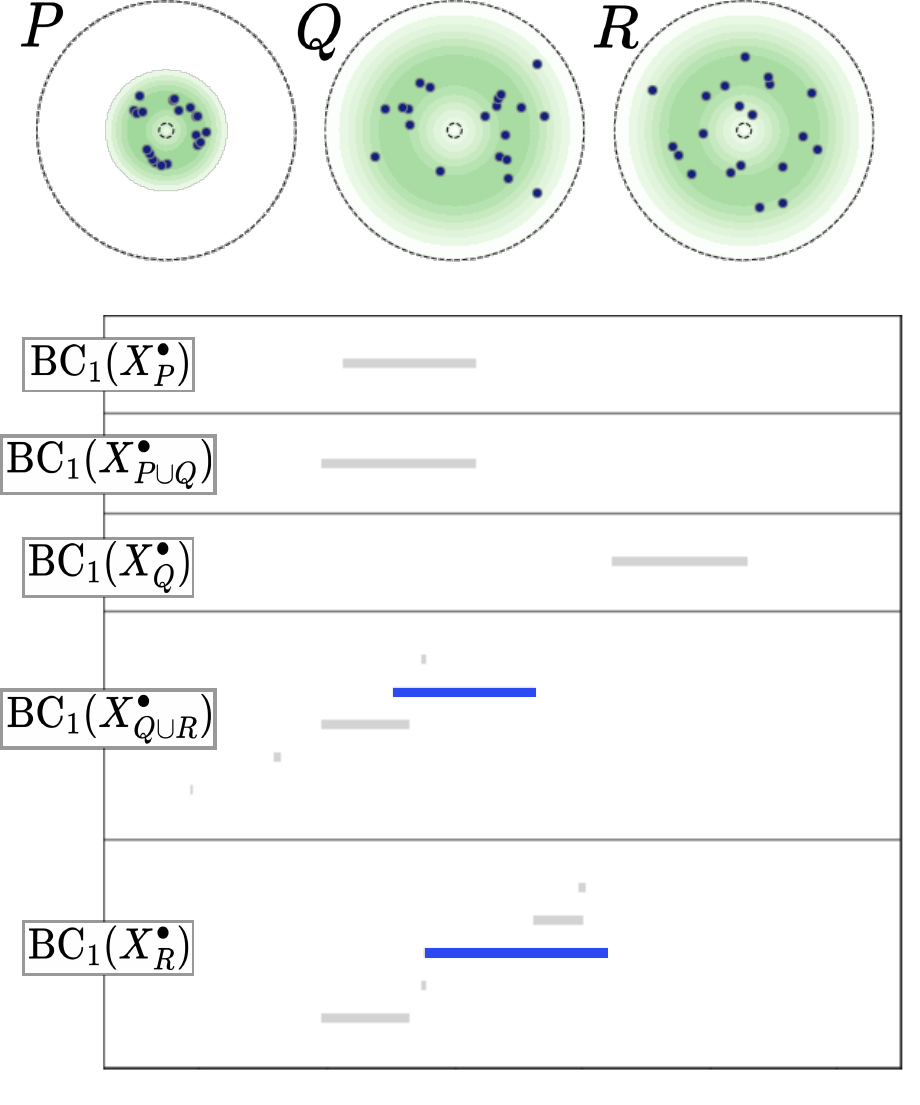}
}
\end{figure}

The second issue with the construction in Equation \ref{map_pm} is that we must have a dense sample of points to ensure that persistent features have similar scale. Consider the populations $Q$ and $R$ in Figure \ref{fig:union_barcodes} sampled from the same distribution. Each individual point cloud has a barcode that suggests a strong 1-dimensional feature. However, when we apply induced matching, only the highlighted blue bars in $\bc_1(X^\bullet_{Q \cup R})$ and $\bc_1(X^\bullet_R)$ can possibly match; the unique bar in $\bc_1(X^\bullet_Q)$ does not have any matching due to an apparent scale difference. Note, however, that $P$ and $R$ \emph{can} be matched using existing methods; thus, the existence or non-existence of a matching is unreliable even in this simple setting.

\subsection{The problem of comparing clique complexes and witness complex.}

Some of the earliest papers in applied topology suggest an alternative to  the union for investigating the relationship between two point clouds in a common metric space: the witness complex $W_{P, Q}^\bullet$ \cite{Witness}. Empirically,  witness complexes have been observed to mitigate many (though not all) of the issues with scale and density we describe above. However, we no longer have an inclusion map and cannot directly apply induced matching to compare persistent homology\footnote{One can construct a map $P\HH_k(W^\bullet_{P,Q}) \to P\HH_k(X^\bullet_P)$ by doubling the parameter. However, doubling the scale at which we consider a complex can be very destructive to homology.}.

Simple examples show that features in $\bc_k(W^\bullet_{P,Q})$ need not correspond in any meaningful way to those in $\bc_k(X^\bullet_P)$. Taking $P$ and $Q$ to be dense samples from complementary semi circles on the unit circle, the barcode $\bc_1(W_{P,Q}^\bullet)$ will have a single nontrivial bar, while $\bc_1(X^\bullet_P)$ and $\bc_1(X^\bullet_Q)$ will both be empty. On the other hand, taking $P$ to be concentrated along the equator of a sphere and at its poles, and $Q$ only at the poles, $\bc_1(W_{P,Q}^\bullet)$ will be empty while $\bc_1(X_P^\bullet)$ will have a persistent bar. To get traction, we will need a method for comparing homology classes between these complexes.

In Section \ref{Extension}, we propose a solution to this problem we call the  \emph{persistent extension} method. Persistent extension uses a zig-zag through the intersection of two simplicial complexes on the same vertex set to identify classes which contain identical representatives at the chain level for some range of parameters, thus identifying topological features one could reasonably consider to be ``the same." In Section \ref{analogous_intervals}, we apply this method and Dowker's theorem to develop the method of \emph{analogous bars}, which provides our desired method for comparing topological features of $X_P^\bullet$ and $X_Q^\bullet$ using only the data in the cross-dissimilarity measure $M_{P,Q}.$

\section{Persistent extension}    
\label{Extension}
In this section, we develop the persistent extension method for comparing persistent homology classes between two filtered simplicial complexes supported on the same vertex set $P$. In Section \ref{cycle_to_cycle_ext}, we introduce the cycle-to-cycles extension method, which, given a homology class in one simplicial complex, enumerates classes in the other that share its cycle representatives. In Section \ref{section:bar_to_bars_extension}, we extend this algorithm to a bar-to-bars extension method, which, given a bar in one barcode, enumerates representations in the other barcode. In Section \ref{variations} we discuss considerations for reducing the computational overhead of applying the persistent extension method. 

\subsection{Cycle-to-cycles extension}    
\label{cycle_to_cycle_ext} 

Suppose $\filtOne$ and $\filtTwo$ are filtered simplicial complexes on vertices $P$. Fix some parameter $\psi$ and take $[\tau] \in \HH_k(Z^\psi)$. The goal of the cycle-to-cycles extension is to find all parameters $\ell$ and classes $[y] \in \HH_k(Y^\ell)$ for which $[\tau]$ and $[y]$ share cycle representatives. We begin by setting some terminology with which to describe the steps in our algorithm. 

To simplify this section, we assume that the filtered simplicial complex $\filtTwo$ has the property that $\SCTwo^N$ is the complete simplex on its vertices, as is the case for both clique and witness complexes as defined in Section \ref{clique_witness}. It is straightforward but tedious to modify the following for settings where this is not the case. 

\begin{definition}
\label{restriction_def}
Let $\filtOne$ and $\filtTwo$ be filtered simplicial complexes on common vertex set $P$ and fix a parameter $\psi$ of $\filtOne$. Given a parameter $\ell$, let $\mapx_{\ell}$ and $ \mapy_{\ell}$ each denote the following induced maps
\[\HH_k(\SCOne^{\psi}) \xleftarrow{\mapx_{\ell}} \HH_k(\SCOne^{\psi} \cap \SCTwo^\ell) \xrightarrow{\mapy_{\ell}} \HH_k(\SCTwo^\ell).\]
Given a homology class $[\tau] \in \HH_k(Z^\psi)$, define the \emph{set of restrictions (of $[\tau]$) at} $\ell$ as 
\[
\restrictiondef_{\ell} = \{ [w]  \in \HH_k(\SCOne^{\psi} \cap \SCTwo^\ell) \: | \:  \: [\tau] = \mapx_{\ell} [w] \},
\]
and define the \emph{set of cycle extensions (of $[\tau]$) at} $\ell$ as 
\[ \cedef_{\ell} = \{ [y] \in \HH_k(\SCTwo^\ell) \: | \: [y] = \mapy_{\ell} [w] \text{ for } [w] \in \restrictiondef_{\ell}  \}. \]
We refer to any $[y] \in E_\ell$ for which $[y] = \mapy_{\ell} [w]$ as a \emph{cycle extension } (\emph{via}  $[w]$). We refer to $E = \bigcup_{\ell} E_{\ell}$ as the \emph{set of cycle extensions (of $[\tau]$)}.
\end{definition}

For a fixed parameter $\ell$, the set $\restrictiondef_\ell$ can be expressed as the solution to a chain-level linear equation, and $\cedef_\ell$ is then given by the solutions to a set of linear equations indexed by $\restrictiondef_\ell$. To naively compute $E$, we would have to iterate this process across every parameter $\ell$, which would be infeasible for even moderately sized data. However, the following two lemmas characterize how restrictions and cycle extensions at different parameters are related, allowing us to limit our computations to a much smaller set of linear systems. 

Given parameters ${\ell} \leq \ell'$, let
\begin{align*}
\eta^{\ell}_{\ell'}: & \quad \HH_k(\SCOne^{\psi} \cap Y^\ell) \to \HH_k(\SCOne^{\psi} \cap Y^{\ell'}), \\
\lambda^{\ell}_{\ell'}: & \quad \HH_k(\SCTwo^\ell) \to \HH_k(Y^{\ell'})
\end{align*}
denote the structure maps for the persistence modules, induced by inclusion.
\begin{lemma}
\label{extension_later}
Let $\filtOne, \filtTwo$ be filtered simplicial complexes on a common vertex set $P$. Given $[\tau] \in \HH_k(Z^\psi)$, fix parameter ${\ell}$ and let $[y] \in \HH_k(\SCTwo^\ell)$ be a cycle extension of $[\tau]$ at $\ell$ via some $[w] \in \HH_k(\SCOne^{\psi} \cap Y^\ell) $. If $\ell' > {\ell}$, then $\lambda^{\ell}_{\ell'}[y]$ is a cycle extension of $[\tau]$ at $\ell'$ via $\eta^{\ell}_{\ell'}[w]$.
\end{lemma}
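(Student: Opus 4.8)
The plan is to deduce the lemma entirely from the functoriality of reduced simplicial homology applied to commuting diagrams of inclusions of subcomplexes; no new idea is needed beyond carefully identifying every map in sight as one induced by an inclusion. First I would record the ambient inclusions. Since $\SCTwo^\ell \subseteq \SCTwo^{\ell'}$, intersecting with $\SCOne^\psi$ gives an inclusion of subcomplexes $\SCOne^\psi \cap \SCTwo^\ell \hookrightarrow \SCOne^\psi \cap \SCTwo^{\ell'}$, and by definition $\eta^\ell_{\ell'}$ is the map it induces on $\HH_k$; similarly $\lambda^\ell_{\ell'}$ is induced by $\SCTwo^\ell \hookrightarrow \SCTwo^{\ell'}$, while $\mapx_\ell, \mapy_\ell$ (resp. $\mapx_{\ell'}, \mapy_{\ell'}$) are induced by $\SCOne^\psi \hookleftarrow \SCOne^\psi \cap \SCTwo^\ell \hookrightarrow \SCTwo^\ell$ (resp. the analogous inclusions at $\ell'$).

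Next I would establish the two equalities that together say exactly that $\lambda^\ell_{\ell'}[y]$ is a cycle extension of $[\tau]$ at $\ell'$ via $\eta^\ell_{\ell'}[w]$, as unwound from Definition \ref{restriction_def}: namely (i) $\eta^\ell_{\ell'}[w] \in \restrictiondef_{\ell'}$, i.e.\ $\mapx_{\ell'}\bigl(\eta^\ell_{\ell'}[w]\bigr) = [\tau]$, and (ii) $\lambda^\ell_{\ell'}[y] = \mapy_{\ell'}\bigl(\eta^\ell_{\ell'}[w]\bigr)$. For (i), the triangle of spaces $\SCOne^\psi \cap \SCTwo^\ell \hookrightarrow \SCOne^\psi \cap \SCTwo^{\ell'} \hookrightarrow \SCOne^\psi$ commutes, so functoriality gives $\mapx_{\ell'} \circ \eta^\ell_{\ell'} = \mapx_\ell$; since $[w] \in \restrictiondef_\ell$ means $\mapx_\ell[w] = [\tau]$, we obtain (i). For (ii), the square with top row $\SCOne^\psi \cap \SCTwo^\ell \hookrightarrow \SCOne^\psi \cap \SCTwo^{\ell'}$, bottom row $\SCTwo^\ell \hookrightarrow \SCTwo^{\ell'}$, and vertical inclusions $\SCOne^\psi \cap \SCTwo^\ell \hookrightarrow \SCTwo^\ell$ and $\SCOne^\psi \cap \SCTwo^{\ell'} \hookrightarrow \SCTwo^{\ell'}$ commutes, so functoriality gives $\lambda^\ell_{\ell'} \circ \mapy_\ell = \mapy_{\ell'} \circ \eta^\ell_{\ell'}$; applying both sides to $[w]$ and using $[y] = \mapy_\ell[w]$ yields (ii).

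I do not expect a genuine obstacle: the whole argument is bookkeeping with commuting inclusions, and the only points worth a sentence of care are that $\SCOne^\psi \cap \SCTwo^\ell$ really is a subcomplex of $\SCOne^\psi \cap \SCTwo^{\ell'}$ (so that $\eta^\ell_{\ell'}$ makes sense as stated, using $\SCTwo^\ell \subseteq \SCTwo^{\ell'}$) and that the phrase ``cycle extension of $[\tau]$ at $\ell'$ via $[w']$'' should be expanded to its two defining conditions before we check them. If one prefers, both commuting diagrams can be viewed as subdiagrams of the poset of subcomplexes of $\SCTwo^{\ell'}$ together with $\SCOne^\psi$, so a single application of the functoriality of $\HH_k$ on that poset yields (i) and (ii) simultaneously.
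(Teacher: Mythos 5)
Your argument is correct and is exactly the route the paper intends: the paper states only that the lemma ``follows directly from commutativity of the relevant diagrams of structure maps and morphisms induced by inclusions'' and leaves the details to the reader, and your two commuting diagrams (the triangle into $\SCOne^\psi$ and the square relating $\eta^\ell_{\ell'}$ and $\lambda^\ell_{\ell'}$) are precisely those details.
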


That is, any cycle extension at $\ell$ passes to a cycle extension at any later parameter ${\ell'}$ via the structure maps. 

\begin{restatable}{lemma}{extensionearlier}
\label{extension_earlier}
Let $\filtOne, \filtTwo$ be filtered simplicial complexes on a common vertex set $P$. Given  $[\tau] \in \HH_k(Z^{\psi})$, let $[y'] \in \HH_k(Y^{\ell'})$ be a cycle extension of $[\tau]$ at $\ell'$ via some $[w'] \in \HH_k(Z^\psi \cap Y^{\ell'})$. Assume that there exists a parameter $\ell < \ell'$ and $[w] \in \HH_k(Z^\psi \cap Y^{\ell})$ such that $\eta^{\ell}_{\ell'}[w] = [w']$. Then, there exists a cycle extension $[y] \in \HH_k(Y^\ell)$ of $[\tau]$ at $\ell$ via $[w]$ satisfying $[y'] = \lambda^{\ell}_{\ell'}[y]$.
\end{restatable}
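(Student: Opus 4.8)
The plan is to unwind all the definitions at the chain level and chase a diagram of short exact sequences of chain complexes. Write $C_\bullet$ for simplicial chains with $\field$-coefficients. The key structural fact is that $Z^\psi \cap Y^\ell$ sits inside both $Z^\psi$ and $Y^\ell$ as a subcomplex, and the maps $\mapx_\ell, \mapy_\ell$ are induced by these inclusions. Start from the hypothesis: $[y'] = \mapy_{\ell'}[w']$ and $[\tau] = \mapx_{\ell'}[w']$, and $[w'] = \eta^\ell_{\ell'}[w]$ with $[w] \in \HH_k(Z^\psi \cap Y^\ell)$. I would first observe that, because $\eta^\ell_{\ell'}$ and the inclusion-induced maps commute with the maps out of the intersection, functoriality already gives
\[
\mapx_{\ell}[w] = \mapx_{\ell'} \circ \eta^\ell_{\ell'}[w] = \mapx_{\ell'}[w'] = [\tau],
\]
so $[w] \in R_\ell$, i.e. $[w]$ is a legitimate restriction of $[\tau]$ at $\ell$. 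Hence the candidate cycle extension is forced: set $[y] := \mapy_\ell[w] \in \HH_k(Y^\ell)$, which lies in $E_\ell$ by definition. It only remains to check $[y'] = \lambda^\ell_{\ell'}[y]$.

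For that last equality I would again use naturality of inclusion-induced maps: the square
\[
\begin{tikzcd}
\HH_k(Z^\psi \cap Y^\ell) \arrow[r, "\mapy_\ell"] \arrow[d, "\eta^\ell_{\ell'}"'] & \HH_k(Y^\ell) \arrow[d, "\lambda^\ell_{\ell'}"] \\
\HH_k(Z^\psi \cap Y^{\ell'}) \arrow[r, "\mapy_{\ell'}"'] & \HH_k(Y^{\ell'})
\end{tikzcd}
\]
commutes, since all four maps are induced by inclusions of subcomplexes of $Y^{\ell'}$ and the square of inclusions commutes on the nose. Therefore
\[
\lambda^\ell_{\ell'}[y] = \lambda^\ell_{\ell'}\mapy_\ell[w] = \mapy_{\ell'}\eta^\ell_{\ell'}[w] = \mapy_{\ell'}[w'] = [y'],
\]
which is exactly the claim.

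So in fact the whole statement reduces to two commuting-square arguments plus the observation that the restriction $[w]$ pulled back along $\eta^\ell_{\ell'}$ automatically maps to $[\tau]$. The only point requiring any care — and the place I expect the "main obstacle" to hide — is making sure the diagram of inclusions is genuinely commutative, i.e. that $Z^\psi \cap Y^\ell \hookrightarrow Y^\ell \hookrightarrow Y^{\ell'}$ and $Z^\psi \cap Y^\ell \hookrightarrow Z^\psi \cap Y^{\ell'} \hookrightarrow Y^{\ell'}$ are literally the same simplicial map (which they are, both being the inclusion $Z^\psi \cap Y^\ell \hookrightarrow Y^{\ell'}$), and similarly for the square involving $\mapx$. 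Given the paper's standing assumption that $Y^N$ is the full simplex, these intersections and inclusions are all unambiguous. No genuinely hard analysis is needed; the content is entirely bookkeeping of functoriality of $\HH_k$ applied to a lattice of subcomplexes of $Y^{\ell'}$. If one wanted a more conceptual phrasing, one could instead note that the whole family $\{\HH_k(Z^\psi \cap Y^\bullet)\}$, $\{\HH_k(Y^\bullet)\}$, and the maps $\mapx_\bullet$, $\mapy_\bullet$ assemble into a morphism of persistence modules over the constant module $\HH_k(Z^\psi)$, and the lemma is the statement that "being a cycle extension of $[\tau]$" is a condition closed under taking preimages along structure maps — but the direct diagram chase above is the cleanest route to a complete proof.
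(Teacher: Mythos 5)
Your argument is correct and is exactly the approach the paper intends: the paper omits the proof, stating only that it "follows directly from commutativity of the relevant diagrams of structure maps and morphisms induced by inclusions," and your two commuting squares (showing $[w]\in R_\ell$ via $\mapx_\ell = \mapx_{\ell'}\circ\eta^\ell_{\ell'}$, then $\lambda^\ell_{\ell'}\circ\mapy_\ell = \mapy_{\ell'}\circ\eta^\ell_{\ell'}$) are precisely those diagrams.
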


The proofs of both lemmas follows directly from commutativity of the relevant diagrams of structure maps and morphisms induced by inclusions. Details are left to the interested reader.

Lemma \ref{extension_earlier} states that if a restriction $[w']$ is in the image of the structure map $\eta^{\ell}_{\ell'},$ then any cycle extension at parameter ${\ell'}$ via $[w']$ is in the image of the structure map $\lambda^{\ell}_{\ell'},$ so this extension is induced by an extension at $\ell.$ 

In particular, suppose $[w] \in \HH_k(\SCOne^{\psi} \cap \SCTwo^\ell)$ is a restriction at $\ell$. Let $\hat{\beta}[w] \leq \ell$ denote the birth parameter of $[w]$ in $P\HH_k(Z^\psi \cap Y^\bullet)$. Then, there exists some $[\hat{w}] \in \HH_k(Z^\psi \cap Y^{\hat\beta[w]})$ satisfying $[w] = \eta^{\hat\beta[w]}_{\ell} [\hat{w}]$. Let $[\hat{y}]$ be a cycle extension of $[\tau]$ at $\hat\beta[w]$ via $[\hat{w}]$. Together, Lemmas \ref{extension_later} and \ref{extension_earlier} tell us that any cycle extension $[y]$ of $[\tau]$ at ${\ell}$ via $[w]$ must satisfy $[y] = \lambda^{\hat\beta[w]}_{\ell}[\hat{y}]$. That is, in order to find cycle extensions of $[\tau]$ via $[w]$ at all parameters, it suffices to find the cycle extensions via $[\hat{w}]$ at $\hat{\beta}[w]$. 

We leverage this observation to enumerate all possible cycle extensions while considering only a substantially restricted set of parameters. To do so, we  utilize the auxiliary filtration 
\begin{equation}
\label{auxfilt}
    \intFilt : \SCOne^{\psi} \cap \SCTwo^{1} \to \SCOne^{\psi} \cap \SCTwo^{2} \to \cdots \to \SCOne^{\psi} \cap \SCTwo^{N} = \SCOne^{\psi} 
    \end{equation}
along with its barcode ${(\bc_k(\intFilt), \intFiltBirth, \intFiltDeath)}$. The benefit of using this filtration is that if $\intFiltDeath(\rho) = \infty$ for $\rho \in \bc_k(\intFilt),$  then $\rho$ corresponds to a cycle in $\HH_k(\SCOne^{\psi})$ under any choice of interval decomposition of $P\HH_k(Z^\psi \cap Y^\bullet)$. 

We now refer the reader to Algorithm \ref{alg:cycle_to_cycle_extension}, where we describe the cycle-to-cycles extension method in full. In step (1), we compute the auxiliary filtration in Equation \ref{auxfilt}. In step (2), we find the collection of parameters that correspond to the birth times of all restrictions of $[\tau]$. In step (3), we find all restrictions at a given birth parameter and find the corresponding cycle extensions.

\begin{algorithm}[t]
\caption{Extension method (cycle-to-cycles extensions) }
\label{alg:cycle_to_cycle_extension}
\textbf{Input:} 
\begin{itemize}
    \item filtered simplicial complexes $\filtOne$, $\filtTwo$ on vertex set $P$, 
    \item a parameter $\psi,$ and
    \item a homology class $[\tau] \in \HH_k(\SCOne^{\psi})$
\end{itemize}
\textbf{Output:} 
\begin{itemize}
    \item a collection of parameters $p_Y$, and 
    \item a collection $\cealgo$ of cycle extensions of $[\tau]$ to $P\HH_k(\filtTwo)$.
\end{itemize}
\textbf{Steps:}
\begin{enumerate}
    \item Compute the persistent homology $P\HH_k(\intFilt)$, the barcode $(\bc_k(\intFilt), \intFiltBirth, \intFiltDeath)$, and an interval decomposition ${\IntDecDelta:\mathbb{I}_{\bc_k(\intFilt)} \to P\HH_k(\intFilt)}$ of the auxiliary filtration in Equation (\ref{auxfilt}),
    \item Find a sufficient collection of parameters $p_Y$ at which to look for cycle extensions of $[\tau]$.
	\begin{enumerate}
     \item Let  $S^{\IntDecDelta}_{[\tau]} =    \{\IntDecDeltaLower^*_1\intXdelta_1, \dots, \IntDecDeltaLower^*_m\intXdelta_m  \}$ be an $\IntDecDelta$-bar representation of $[\tau]$ at $N$. 
     \item Let $\Rho^{\IntDecDelta}_{\tau} = \{\intXdelta_1, \dots, \intXdelta_m \}$ be the set of bars in this $\IntDecDelta$-bar representation.%
\item Let $\ell_0 = \max_{\intXdelta \in \Rho^{\IntDecDelta}_{\tau}} \{ \intFiltBirth(\intXdelta) \}.$ 
\item Let $\omega = \{\mu \in \bc_k(\intFilt) \;|\; \ell_0 < \intFiltBirth(\mu) <  \intFiltDeath(\mu) < \infty\},$  
\item Let ${p_Y} = \{ \ell_0 \} \cup \{ \intFiltBirth(\mu) \;|\; \mu \in \omega\}$.
\end{enumerate}

\item Find a complete set of cycle extensions of $[\tau]$ to $P\HH_k(\filtTwo).$
\begin{enumerate}
\item Let $\Rho_{\text{short}} = \{ \intXdelta \in \bc_k(\intFilt) \; | \;  \ell_0 < \intFiltDeath(\intXdelta) < \infty \}.$
\item For each $\ell\in p_Y$,
\begin{enumerate}
    \item Let $\Rho^{\ell}_{\text{short}} = \{ \intXdelta \in \Rho_{\text{short}} \; | \; \intFiltBirth(\intXdelta) \leq \ell < \intFiltDeath(\intXdelta) < \infty \}.$
    \item Let $V^{\ell}_{\text{short}} = \{[\rho^{\mathcal{F}, \ell}] \;|\; \intXdelta \in \Rho^{\ell}_{\text{short}} \}$.
    \item Let $\restrictionalgo^{\IntDecDelta}_{\ell} =  \sum_{i=1}^m \IntDecDeltaLower^*_i[\rho_i^{\IntDecDelta, \ell}] + \text{span}_{\field}V_{\text{short}}^\ell.$ 
    \item Let $\cealgo_{\ell} = \{ \mapy_{\ell}([w])  \; | \; [w] \in \restrictionalgo_{\ell}^{\IntDecDelta} \}$
    where $\mapy_{\ell}: \HH_k(\SCOne^{\psi} \cap \SCTwo^{\ell}) \to \HH_k(\SCTwo^{\ell})$ is the map induced by inclusion.
 \end{enumerate}
\end{enumerate}
\item Return $p_Y$ and $\cealgo = \bigcup_{\ell \in p_Y} \cealgo_{\ell}.$

 \end{enumerate}
\end{algorithm}

 There are several facets of Algorithm \ref{alg:cycle_to_cycle_extension} that require justification. We need to show that the collections $p_Y$ and $\restrictionalgo^{\IntDecDelta}_{\ell}$ are independent of the choice of interval decomposition $\IntDecDelta$ of $P\HH_k(Z^\psi \cap Y^\bullet)$, that the affine subspace $\restrictionalgo^{\IntDecDelta}_{\ell}$ coincides with the set $\restrictiondef_{\ell}$ of restrictions\footnote{In particular, note that we avoid solving a linear system to extract the set of solutions.} as defined in Definition \ref{restriction_def}, and that the output $\cealgo$ of the algorithm provides the complete set $E$ of cycle extensions of $[\tau]$ from Definition \ref{restriction_def}, as stated in the following Theorem.
 
 \begin{restatable}{theorem}{mainthm}
 \label{main_thm}
 Let $\filtOne, \filtTwo$ be filtered simplicial complexes on a common vertex set $P$. Fix a parameter $\psi$ and homology class $[\tau] \in \HH_k(Z^\psi)$. The output of Algorithm \ref{alg:cycle_to_cycle_extension} suffices to recover all cycle extensions of $[\tau]$. That is, let $E$ be the set of all cycle extensions of $[\tau]$ per Definition \ref{restriction_def}. Given parameters $\ell < \ell'$, let $\lambda^{\ell}_{\ell'}: \HH_k(\SCTwo^{\ell}) \to \HH_k(\SCTwo^{\ell'})$ be the map induced by inclusion. Let 
\[\cealgo^* = \{[y] \in \HH_k(\SCTwo^{\ell'}) \; | \; \text{ there exists }\ell \in p_Y, [y_{\ell}] \in \cealgo_{\ell} \text{ such that } [y] = \lambda^{\ell}_{\ell'}([y_{\ell}]) \},\]
where $p_Y$ and $\cealgo = \cup_{\ell \in p_Y} \cealgo_\ell$ are the outputs of Algorithm \ref{alg:cycle_to_cycle_extension}. Then $E = \cealgo^*$.
 
 \end{restatable}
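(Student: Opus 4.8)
The plan is to prove the two inclusions $\cealgo^* \subseteq E$ and $E \subseteq \cealgo^*$ separately, in each case reducing the statement about arbitrary parameters to the finite collection $p_Y$ via Lemmas \ref{extension_later} and \ref{extension_earlier}. But before either inclusion can even be stated correctly, I must first verify the three ``facets'' flagged just before the theorem: that $p_Y$ and $\restrictionalgo^{\IntDecDelta}_\ell$ do not depend on the chosen interval decomposition $\IntDecDelta$, and that $\restrictionalgo^{\IntDecDelta}_\ell$ (as written in step 3(b)iii as an affine translate of $\mathrm{span}_\field V^\ell_{\text{short}}$) actually equals the set $\restrictiondef_\ell$ of restrictions from Definition \ref{restriction_def}. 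For the latter, the key point is that $\restrictiondef_\ell$ is the preimage $\mapx_\ell^{-1}([\tau])$, which is either empty or an affine subspace $[w_0] + \ker \mapx_\ell$ for any particular solution $[w_0]$; the claim is then that (i) $\sum_i \IntDecDeltaLower^*_i [\rho_i^{\IntDecDelta,\ell}]$ is such a particular solution whenever $\ell \geq \ell_0$ — this uses that the bars $\rho_i$ in the $\IntDecDelta$-bar representation of $[\tau]$ have infinite death (so they survive to $\HH_k(Z^\psi)$ and $\mapx_N$ sends the sum to $[\tau]$) together with commutativity of the zig-zag maps $\eta$ with $\mapx$ — and (ii) $\ker \mapx_\ell = \mathrm{span}_\field V^\ell_{\text{short}}$, i.e.\ the kernel of the map to $\HH_k(Z^\psi)$ is spanned precisely by the classes corresponding to bars of $\bc_k(\intFilt)$ alive at $\ell$ with \emph{finite} death (these are the classes that die before reaching $Z^\psi$). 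Basis-independence of $\restrictionalgo^{\IntDecDelta}_\ell$ then follows because an affine subspace of a vector space does not depend on how we coordinatize it; basis-independence of $p_Y$ follows because $\ell_0 = \max_\rho \intFiltBirth(\rho)$ over a bar representation of $[\tau]$ is independent of $\IntDecDelta$ by (the argument behind) Lemma \ref{lemma:class_birth_death} applied to $[\tau]$ in the module $P\HH_k(\intFilt)$, and $\omega$ depends only on $\ell_0$ and the barcode.

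For $\cealgo^* \subseteq E$: take $[y] = \lambda^\ell_{\ell'}([y_\ell])$ with $\ell \in p_Y$ and $[y_\ell] = \mapy_\ell([w])$ for some $[w] \in \restrictionalgo^{\IntDecDelta}_\ell = \restrictiondef_\ell$. Then $[y_\ell] \in \cedef_\ell$, and Lemma \ref{extension_later} (applied along $\ell \leq \ell'$) shows $[y] = \lambda^\ell_{\ell'}([y_\ell])$ is a cycle extension at $\ell'$ via $\eta^\ell_{\ell'}[w]$, hence $[y] \in \cedef_{\ell'} \subseteq E$. This direction is essentially immediate once $\restrictionalgo^{\IntDecDelta}_\ell = \restrictiondef_\ell$ is in hand.

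For the reverse inclusion $E \subseteq \cealgo^*$, which I expect to be the main obstacle: let $[y'] \in \cedef_{\ell'} \subseteq E$, so $[y'] = \mapy_{\ell'}([w'])$ for some restriction $[w'] \in \restrictiondef_{\ell'}$. Let $\hat\beta = \intFiltBirth[w']$ be the birth parameter of $[w']$ in $P\HH_k(\intFilt)$ and pick $[\hat w] \in \HH_k(Z^\psi \cap Y^{\hat\beta})$ with $\eta^{\hat\beta}_{\ell'}[\hat w] = [w']$; by functoriality of $\mapx$, $[\hat w]$ is a restriction at $\hat\beta$, and by Lemma \ref{extension_earlier} there is a cycle extension $[\hat y] \in \HH_k(Y^{\hat\beta})$ via $[\hat w]$ with $[y'] = \lambda^{\hat\beta}_{\ell'}[\hat y]$. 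It therefore suffices to show $\hat\beta \in p_Y$ (or that $[\hat y]$ is the $\lambda$-image of something in some $\cealgo_\ell$, $\ell \in p_Y$). Here is where the choice of $p_Y$ must be shown to be ``sufficient'': one must argue that the birth parameter of \emph{any} restriction of $[\tau]$ lies in $p_Y$. Decompose $[w']$ over an interval decomposition of $P\HH_k(\intFilt)$ into its infinite-death part and its finite-death part; by Lemma \ref{lemma:class_birth_death} the birth of $[w']$ is the max of the births of the bars appearing. The infinite-death bars appearing must be among those in the bar representation $\Rho^{\IntDecDelta}_\tau$ of $[\tau]$ (since $\mapx_{\ell'}[w']=[\tau]$ forces the surviving part to map onto $[\tau]$, and one shows the surviving part \emph{is} a $\IntDecDelta$-bar representation of $[\tau]$ — using the uniqueness consequences of Lemmas \ref{lemma:basis_change} and \ref{lemma:diagonal} under the standing assumption of unique birth-death pairs), contributing a birth $\leq \ell_0$; the finite-death bars appearing contribute births among $\{\intFiltBirth(\mu) : \mu \in \omega\}$ once we also observe the finite bar must be alive at $\hat\beta$ and have death $> \ell_0$ (if its birth were $\leq \ell_0$ we can absorb it, replacing $[\hat w]$ by a representative born at an earlier element of $p_Y$ — this is exactly the role of including $\ell_0$ itself in $p_Y$ and of taking $\restrictionalgo^{\IntDecDelta}_\ell$ to be an affine \emph{subspace} rather than a single class, so that adjusting by $\ker\mapx_\ell$ stays inside the computed set). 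Chasing this bookkeeping carefully — matching ``$\hat\beta$'' to the correct element of $p_Y$ and checking the resulting $[\hat y]$ lands in $\cealgo_{\hat\beta}$ via step 3(b)iv — is the delicate part; everything else is diagram-chasing with the commuting squares of inclusion-induced maps. Finally, combining both inclusions yields $E = \cealgo^*$.
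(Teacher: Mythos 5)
Your proposal is correct and follows essentially the same route as the paper's proof in Appendix \ref{proof_of_extension_algorithm}: the same three preliminary facts, the same two inclusions driven by Lemmas \ref{extension_later} and \ref{extension_earlier}, and the same crux for $E \subseteq \cealgo^*$, namely that the birth parameter of any restriction of $[\tau]$ must lie in $p_Y$ (which you actually justify in more detail than the paper's terse ``a new cycle is born at $\bar{\ell}$'' remark). The only local differences are organizational: you obtain $\restrictionalgo^{\IntDecDelta}_{\ell} = \restrictiondef_{\ell}$ via the particular-solution-plus-$\ker\mapx_{\ell}$ description and get basis-independence of $\restrictionalgo^{\IntDecDelta}_{\ell}$ for free from it, and you invoke Lemma \ref{lemma:class_birth_death} (stated but left as an exercise in the paper) for the well-definedness of $\ell_0$, where the paper instead carries out an explicit matrix computation.
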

 
The proof is technical, so we defer it to Appendix \ref{proof_of_extension_algorithm} where we break it down into a sequence of smaller theorems, and we instead proceed with an example that illustrates the method. 

\begin{example} 
\label{ex:cycle-to-cycles}
We visualize the key steps of Algorithm \ref{alg:cycle_to_cycle_extension} using a set of points $P$ sampled from a flattened torus and a set of points $Q$ sampled near a essential circle on that torus, as depicted in Figure \ref{fig:ex_alg_1}. We assume that homology is computed with $\field_2$ coefficients and that all bars have unique birth-death pairs. 

Let $\filtOne = W^\bullet_{P,Q}$, the Witness filtration with $P$ as landmarks and $Q$ as witnesses computed from the cross-dissimilarity matrix $M_{P,Q}$, and let $\filtTwo = X^\bullet_P,$ computed from the pairwise dissimilarity matrix $M_P$. Let $[\tau] \in \HH_1(W^\psi_{P,Q})$ be the homology class illustrated in Figure \ref{fig:ex_alg_1}.

We now execute Algorithm \ref{alg:cycle_to_cycle_extension}. The right panel in Figure \ref{fig:ex_alg_1} illustrates the barcode $\bc_1(W_{P,Q}^\psi \cap X^\bullet_P)$, and we will refer to it throughout. In step (1), we fix an interval decomposition $\IntDecDelta: \mathbb{I}_{\bc_1(W_{P,Q}^{\psi} \cap X^\bullet_P)} \to P\HH_1(W_{P,Q}^{\psi} \cap X^\bullet_P)$. In step (2), we gather information about this barcode. For our choice of $\IntDecDelta$, the unique $\IntDecDelta$-bar representation of $[\tau]$ at $N$ is $S^{\IntDecDelta}_{[\tau]} = \{ \rho_1\}$, where $\rho_1$ is the green bar. The parameter $\ell_0 = \hat{\beta}(\rho_1)$ at which this bar is born is indicated by the vertical dotted line. Note that $\ell_0$ is the minimum parameter at which there exists a restriction of $[\tau]$. The set $\omega$ consists of the blue bars above the green bar, each being born and dying within the interval $(\ell_0, \infty)$. Finally, take our list of parameters $p_Y$ to be the set of birth times of the green and blue bars. The collection $p_Y$ corresponds to the birth parameters of restrictions of $[\tau]$ in $P\HH_k(Z^\psi \cap Y^\bullet)$. 

For step (3), we take $\Rho_{\text{short}}$ to be the set of all bars that die after the green bar is born. In the first pass through the loop in step (3-b), taking $\ell_0 \in p_Y$ as our parameter, the purple bars in Figure \ref{fig:ex_alg_1} indicate the intervals $\Rho^{\ell_0}_{\text{short}} = \{ \intXdelta_2, \dots, \intXdelta_{15} \}$ selected in step (3-b-i). The restrictions at $\ell_0$ in step (3-b-iii) thus constitute the affine subspace \[ \restrictionalgo_{\ell_0} = \{  [\intXdelta_1^{\IntDecDelta, \ell_0}] +  c_2[\intXdelta_2^{\IntDecDelta, \ell_0}] +  \dots + c_{15} [\intXdelta_{15}^{\IntDecDelta, \ell_0}]  \: | \: c_2, \dots, c_{15} \in \field_2 \}.\]

The inset panel in Figure \ref{fig:ex_alg_1} illustrates cycle representatives of $[\intXdelta_1^{\IntDecDelta, \ell_0}]$ in green and $[\intXdelta_2^{\IntDecDelta, \ell_0}]$ in purple. Note that both $[\intXdelta_1^{\IntDecDelta, \ell_0}]$ and $[\intXdelta_1^{\IntDecDelta, \ell_0}] + [\intXdelta_2^{\IntDecDelta, \ell_0}]$ describe cycles whose representatives are similar to that of $[\tau]$; the latter is simply a deformation of the former to pass through a local cycle with short lifetime.  Indeed, all elements of $\restrictionalgo_{\ell_0}$ are such deformations. Finally, in step (3-b-iv), we push these into $\HH_k(Y^{\ell_0})$ as elements in $\cealgo_{\ell_0}$, which is the collection of cycle extensions at $\ell_0$. 

Step 3(b) repeats the above process for all $\ell \in p_Y$. For a fixed $\ell$, the set $\restrictionalgo_{\ell}$ corresponds to the set of restrictions at $\ell$, and $\cealgo_{\ell}$ corresponds to the set of cycle extensions at $\ell$.

\end{example}

\begin{figure}[t]
\centering
 
\includegraphics[width=0.9\textwidth]{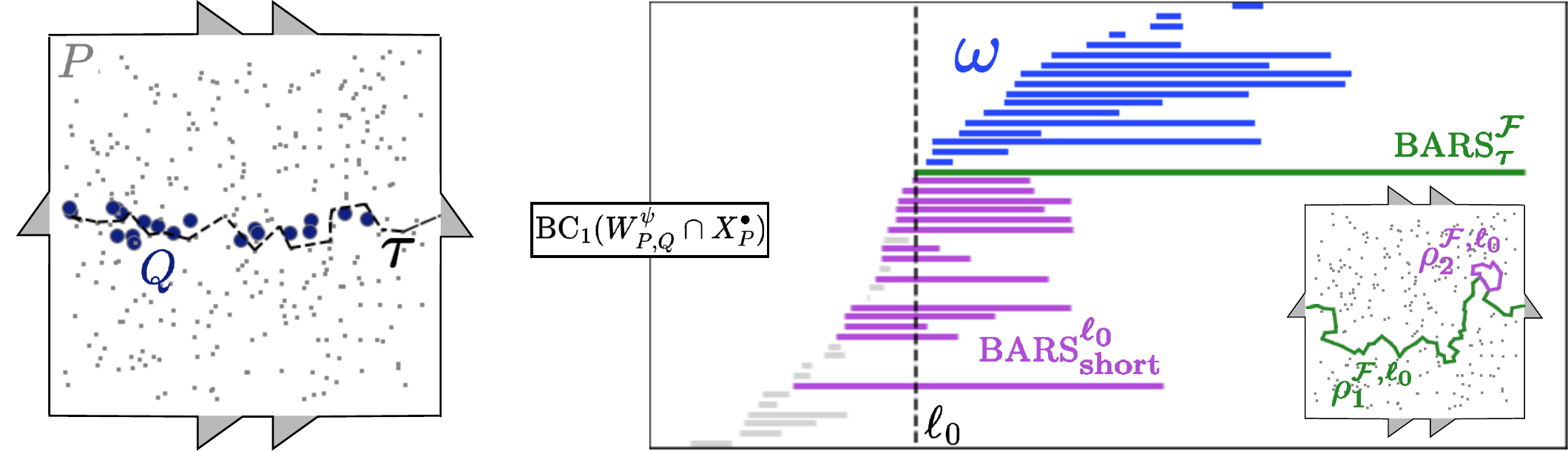}

\caption{\textbf{Elements of Algorithm 1 in Example \ref{ex:cycle-to-cycles}.} (left) Grey points $P$ sampled from a flattened torus and blue points $Q$ sampled near an essential circle of that torus, along with a representative $\tau$ of a class $[\tau]\in \HH_1(W^\psi_{P,Q})$ that represents the essential circle. (right) The barcode of the auxiliary filtration, $\bc_1(W^\psi_{P,Q}\cap X^\bullet_P)$. For a fixed interval decomposition $\IntDecDelta$ of $\bc_1(W^\psi_{P,Q}\cap X^\bullet_P)$, the long green bar corresponds to the only element $\rho_1 \in \Rho^\IntDecDelta_\tau,$ which is born at parameter $\ell_0.$ Short purple bars with lifetimes intersecting $\ell_0$ make up $\Rho^{\ell_0}_{\text{short}}$, and finite blue bars born after $\ell_0$ constitute the set $\omega$. (inset) Representatives of $[\rho_1^{\IntDecDelta,\ell_0}]$ in green and  $[\rho_2^{\IntDecDelta,\ell_0}]$ in purple corresponding to a bar in $\Rho^{\ell_0}_{\text{short}}.$ }
\label{fig:ex_alg_1}
\end{figure}

\subsection{Bar-to-bars extensions}
\label{section:bar_to_bars_extension}
It is common in applications to refer to bars, not cycles, as the features of a data set. Thus, for practitioners it may be of use to understand relations between bars rather than cycles. The bar-to-bars extension method enumerates ways in which a selected bar $\tau \in \bc_k(\filtOne)$ coincides at the chain level with elements of $P\HH_k(\filtTwo)$ and represents the corresponding cycles in terms of $\bc_k(Y^\bullet)$. To do so, we first find all homology class representations of $\tau$ in $P\HH_k(Z^\bullet)$. For each homology class representation, we find all cycle extensions, and, for each cycle extension, we find the corresponding bar representation in $\bc_k(\filtTwo)$. We discuss the notion of bar extensions before presenting the algorithm.

\medskip 
The first step is to find all homology classes that correspond to a given bar $\tau \in \bc_k(Z^\bullet)$. If we fix an interval decomposition $\mathcal{B}$ of $P\HH_k(\SCOne^\bullet)$ and some parameter $\psi$ for $\filtOne$, then we automatically have the homology class $[\tau^{\IntDecZ, \psi}]$ that $\mathcal{B}$-corresponds to $\tau$.

Which parameter $\psi$ should we choose? In the absence of further information, the
safest choice is to select $\psi$ which maximizes the size of our homology class of interest\footnote{In specific applications where we understand the complexes involved more completely, it may be sensible to select a different parameter at which to consider classes corresponding to $\tau$.} but avoiding introducing enough equivalences to trivialize our feature of interest. Thus, we consider the last parameter at which the bar $\tau$ is alive.

\begin{definition}
\label{def:terminal class}
Let $\IntDecZ: \mathbb{I}_{\bc_k(\filtOne)} \to P\HH_k(\filtOne)$ be an interval decomposition. The \emph{terminal class of  a bar $\tau$ in $\bc_k(\filtOne)$  (under $\IntDecZ$)} is the class $[\tau^\IntDecZ_\ast] = [\tau^{\IntDecZ, \delta(\tau)-1}]\in \HH_k(\SCOne^{\delta(\tau)-1})$. When $\IntDecZ$ is unambiguous, we will suppress it and write $[\tau_\ast]$.
\end{definition}

Recall that a bar $\tau \in \bc_k(\filtOne)$ corresponds to a 1-dimensional persistent subspace $V^\bullet$ of $P\HH_k(Z^\bullet)$ under $\mathcal{B}$. 

\begin{remark}
\label{remark:decompositions}
Without a preferred interval decomposition $\IntDecZ,$ it will be necessary to consider all possible choices of $\IntDecZ$ to obtain a comprehensive list of possible terminal classes of $\tau.$ In Section \ref{simplifying_assumptions}, we will show that the terminal class is independent of the interval decomposition $\IntDecZ$ if homology is computed with $\field_2$ coefficients and if the bars of $\bc_k(\filtOne)$ have unique death parameters. Thus, for readers interested in that very common case, it is reasonable to ignore the dependence on $\IntDecZ$ in what follows. 
\end{remark}
Once we fix the terminal class $[\tau_*^{\mathcal{B}}]$ of $\tau$, one can compute the cycle extensions of $[\tau_*^{\mathcal{B}}]$. In the context of comparing barcodes, we need to describe these cycle extensions in terms of the bars in $\bc_k(\filtTwo)$. Given an interval decomposition $\mathcal{D}:\mathbb{I}_{\bc_k(\filtTwo)} \to P\HH_k(\filtTwo)$ and a cycle extension $[y] \in \HH_k(Y^\ell)$ of $[\tau_*^{\mathcal{B}}]$ at $\ell$, recall from Definition \ref{def:bar_representation_of_class} that $S^{\mathcal{D}}_{[y]}$ denotes the $\mathcal{D}$-bar representation of $[y]$. Using this terminology, we can describe cycle extensions and bar extensions of a bar $\tau \in \bc_k(Z^\bullet)$. 

\begin{definition}
\label{def:extensions_of_bar}
Let $Z^\bullet, Y^\bullet$ be filtered simplicial complexes on a common vertex set $P$. Let $\tau \in \bc_k(Z^\bullet)$. Given an interval decomposition $\mathcal{B}$ of $P\HH_k(Z^\bullet)$, let $E_{\mathcal{B}}$ denote the set of cycle extensions of $[\tau^{\mathcal{B}}_*]$. Let $B$ and $D$ each denote the set of all possible interval decompositions of $P\HH_k(Z^\bullet)$ and $P\HH_k(Y^\bullet)$. Define the \emph{cycle extensions of} $\tau$ as 
\[ E(\tau, Y^\bullet) = \bigcup_{\mathcal{B} \in B} E_{\mathcal{B}}, \]
and define the \emph{bar extensions of} $\tau$ as 
\[S(\tau, Y^\bullet) = \{ S^{\mathcal{D}}_{[y]} \; | \; [y] \in E(\tau, Y^\bullet), \mathcal{D} \in D \}. \]
When given a homology class $[\tau] \in \HH_k(Z^\psi)$, we use the corresponding terminology \emph{bar extensions of} $[\tau]$ to refer to
\[ S([\tau], Y^\bullet) = \{S^{\mathcal{D}}_{[y]} \; | \; [y] \in E, \mathcal{D} \in D \}, \]
where $E$ denotes the set of cycle extensions of $[\tau]$ from Definition \ref{restriction_def}.
\end{definition}

\medskip
Now that we defined the bar extensions of $\tau$, we can discuss the overall structure of our bar-to-bars extension algorithm. Given $\tau \in \bc_k(Z^\bullet)$ we first find all terminal classes of $\tau$ by enumerating all possible interval decompositions of $P\HH_k(Z^\bullet)$. We then use the resulting homology classes as input for Algorithm \ref{alg:cycle_to_cycle_extension} and generate all cycle extensions of $\tau$. Finally, for each cycle extension, we enumerate all interval decompositions of $P\HH_k(Y^\bullet)$ to find the bar representations of the cycle extensions. 

In Sections \ref{sec:alt_decomp_one} and \ref{sec:alt_decomp_two}, we discuss details of performing this enumeration efficiently and summarize the process in Algorithm \ref{alg:bar_to_bar_extension}. Finally, in Section \ref{simplifying_assumptions}, we show that a much simpler process (Algorithm \ref{alg:bar_to_bar_extension_F2}) suffices when we compute persistent homology with $\mathbb{F}_2$ coefficients and when our bars have unique death parameters. We first discuss the computational details of enumerating the interval decompositions. 

\subsubsection{Finding all terminal classes of a bar}
\label{sec:alt_decomp_one}
Given a bar $\tau \in \bc_k(\filtOne)$, we must find an explicit parameter $\psi$ and homology class $[\tau] \in \HH_k(Z^\psi)$ to be used as input to the cycle-to-cycles extension method (Algorithm \ref{alg:cycle_to_cycle_extension}). As discussed in Definition \ref{def:terminal class}, we fix $\psi = \delta(\tau)-1$ and the terminal class $[\tau^{\IntDecZ}_*] \in \HH_k(Z^{\delta(\tau)-1})$ under some fixed interval decomposition $\IntDecZ: \mathbb{I}_{\bc_k(\filtOne)} \to P\HH_k(\filtOne)$. To avoid imposing an arbitrary choice of $\mathcal{B}$, it is necessary to find all terminal classes $[\tau^{\mathcal{C}}_*]$ under different interval decompositions $\mathcal{C}$ of $P\HH_k(\filtOne)$ and apply Algorithm \ref{alg:cycle_to_cycle_extension} to these terminal classes. Note that the iteration over all interval decompositions of $P\HH_k(\filtOne)$ allows us to find all 1-dimensional persistent subspaces of $P\HH_k(\filtOne)$ that can be represented by $\tau$. We then find all homology classes representing such persistent subspaces.

Recall from Section \ref{sec:class_correspondence} that any interval decomposition $\mathcal{C}$ of $P\HH_k(Z^\bullet)$ can be obtained from $\mathcal{B}$ as $\mathcal{C} = \IntDecZ \circ \mathcal{L}$ for some automorphism $\mathcal{L}$ of the barcode module  $\mathbb{I}_{\bc_k(\filtOne)}$. The terminal class under $\mathcal{C}$ can be computed by $[\tau^{\mathcal{C}}_*] = [\mathcal{C}^{\psi}(\vec{e}^{\,\psi}_\tau)] = [\mathcal{B}^\psi \circ \mathcal{L}^\psi(\vec{e}^{\, \psi}_{\tau})] =[B^\psi \cdot L^\psi \cdot \vec{e}^{\, \psi}_{\tau}],$ where $\psi = \delta(\tau)-1$, $\vec{e}^{\, \psi}_{\tau}$ is the basis vector for $\tau$ at $\psi$, and $B^\psi$ and $L^\psi$ are the matrix representations of the maps $\mathcal{B}^\psi$ and $\mathcal{L}^\psi$. Thus, to find all possible terminal class $[\tau^{\mathcal{C}}_*]$, it suffices to find all linear isomorphisms $\mathcal{L}^{\psi}: (\mathbb{I}_{\bc_k(\filtOne)})^\psi \to (\mathbb{I}_{\bc_k(\filtOne)})^\psi$ resulting from the restriction of the automorphism $\mathcal{L}:\mathbb{I}_{\bc_k(\filtOne)} \to \mathbb{I}_{\bc_k(\filtOne)}$. We use the following lemma for this purpose\footnote{Per Remark \ref{rem:other_work}, a generalization of this result was established indepdently in \cite{JNT_barcode_bases}. }. 

\begin{restatable}{lemma}{LZ}
\label{lemma:LZ}
Given persistence module $P\HH_k(\filtOne)$, let $\bc_k(Z^{\psi}) = \{ \tau_1, \dots, \tau_m \}$ be some ordering of bars alive at $\psi$. Let $L_Z$ be the collection of $m \times m$ matrices
\[L_Z = \{ L \in \field^{m \times m} \; |  \; L_{r,r} \neq 0, L_{r,c} = 0 \text{ if } \beta(\tau_r) > \beta(\tau_c) \text{ or } \delta(\tau_r) > \delta(\tau_c) \text{ for all } r,c \}. \]
The collection $L_Z$ is precisely the collection of matrices representing the map $\mathcal{L}^{\psi}: (\mathbb{I}_{\bc_k(\filtOne)})^{\psi} \to (\mathbb{I}_{\bc_k(\filtOne)})^{\psi}$ under all possible choices of barcode module automorphism $\mathcal{L} : \mathbb{I}_{\bc_k(\filtOne)} \to \mathbb{I}_{\bc_k(\filtOne)}$. 
\end{restatable}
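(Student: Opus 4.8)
\textbf{Proof proposal for Lemma \ref{lemma:LZ}.}

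The plan is to prove the two inclusions separately: first that every matrix arising as $L^\psi$ for some barcode module automorphism $\mathcal{L}$ lies in $L_Z$, and second that every matrix in $L_Z$ actually extends to such an automorphism. The first inclusion is essentially a repackaging of Lemmas \ref{lemma:basis_change} and \ref{lemma:diagonal}. Indeed, if $\mathcal{L}$ is any automorphism of $\mathbb{I}_{\bc_k(\filtOne)}$ and $L^\psi$ its matrix at parameter $\psi$ with respect to the chosen ordering $\tau_1, \dots, \tau_m$ of $\bc_k(Z^\psi)$, then Lemma \ref{lemma:diagonal} (applicable because all bars have unique birth-death pairs under our standing assumption) gives $L^\psi_{r,r} \neq 0$ for all $r$, and Lemma \ref{lemma:basis_change} forces $L^\psi_{r,c} = 0$ whenever the interval relations \eqref{interval_relations} fail for $\tau_r, \tau_c$; in particular $L^\psi_{r,c} = 0$ if $\beta(\tau_r) > \beta(\tau_c)$ or $\delta(\tau_r) > \delta(\tau_c)$. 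Hence $L^\psi \in L_Z$.

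For the reverse inclusion, I would take an arbitrary $L \in L_Z$ and build an automorphism $\mathcal{L}: \mathbb{I}_{\bc_k(\filtOne)} \to \mathbb{I}_{\bc_k(\filtOne)}$ whose parameter-$\psi$ matrix is $L$. The natural construction is to define, for each $\ell$, the matrix $L^\ell$ on the ordered basis $\{\vec{e}^{\;\ell}_{\tau} : \tau \in \bc_k(Z^\ell)\}$ by setting $L^\ell_{r,c}$ equal to the corresponding entry of $L$ whenever both $\tau_r$ and $\tau_c$ are alive at $\ell$, and extending by the identity otherwise — concretely, $(L^\ell)_{r,c}$ is the entry of $L$ indexed by the bars $\tau_r, \tau_c \in \bc_k(Z^\ell)$ (which are also alive at $\psi$ precisely when their interval contains both $\ell$ and $\psi$). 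One must check three things: (i) each $L^\ell$ is invertible; (ii) the collection $\{L^\ell\}$ is compatible with the structure maps of the barcode module, i.e. the diagram \eqref{diag:barcode_iso} commutes, so that they assemble into a genuine persistence module morphism $\mathcal{L}$; and (iii) the restriction of $\mathcal{L}$ to parameter $\psi$ is exactly $L$. Point (iii) is immediate from the construction, and point (i) follows because the submatrix defining $L^\ell$ on the bars alive at $\ell$ is, after reordering, block upper-triangular with nonzero diagonal — more precisely, one can order the bars so that $\tau_r$ precedes $\tau_c$ whenever $\beta(\tau_r) \le \beta(\tau_c)$ and $\delta(\tau_r) \le \delta(\tau_c)$ is the only way $L_{r,c}$ can be nonzero, and Lemma \ref{lemma:linear_independence} (or the triangularity directly) then gives invertibility.

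The main obstacle is point (ii): verifying that the chosen $L^\ell$'s commute with the structure maps $\phi^\ell$ of $\mathbb{I}_{\bc_k(\filtOne)}$, which are the block-diagonal maps that send $\vec{e}^{\;\ell}_\tau$ to $\vec{e}^{\;\ell+1}_\tau$ when $\tau$ is alive at both $\ell$ and $\ell+1$ and to $0$ otherwise. The condition that must be checked is that for each $\ell$ and each bar $\tau_c$ alive at both $\ell$ and $\ell+1$, applying $L^{\ell+1}$ after $\phi^\ell$ agrees with applying $\phi^\ell$ after $L^\ell$; the potential failure is that $L^\ell$ could mix in a contribution from a bar $\tau_r$ that is alive at $\ell$ but \emph{not} at $\ell+1$ (so $\phi^\ell$ kills it downstream) or alive at $\ell+1$ but not at $\ell$. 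The defining sparsity pattern of $L_Z$ — namely $L_{r,c} = 0$ unless $\beta(\tau_r) \le \beta(\tau_c)$ and $\delta(\tau_r) \le \delta(\tau_c)$ — is exactly what rules this out: if $L_{r,c} \neq 0$ and $\tau_c$ is alive on an interval, then $\beta(\tau_r) \le \beta(\tau_c)$ and $\delta(\tau_r) \le \delta(\tau_c)$ means $\tau_r$ is born no later and dies no later, so wherever a clash could occur one of the two maps already vanishes and both composites agree. I would organize this as a short case analysis on whether $\ell < \beta(\tau_c)-1$, $\beta(\tau_c)-1 \le \ell < \delta(\tau_c)-1$, or $\ell \ge \delta(\tau_c)-1$, tracking which summands survive; this is the only genuinely computational part and the rest is bookkeeping.
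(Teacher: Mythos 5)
Your proposal is correct and follows essentially the same route as the paper: the forward inclusion is exactly the paper's appeal to Lemmas \ref{lemma:basis_change} and \ref{lemma:diagonal}, and the reverse inclusion is the paper's argument that any $L \in L_Z$ is invertible (Lemma \ref{lemma:linear_independence}) and extends to an automorphism because $\mathbb{I}_{\bc_k(\filtOne)}$ is a direct sum of interval modules. The only difference is one of detail: where the paper compresses the extension step into a single sentence, you make the construction of the $L^\ell$ explicit and verify commutativity with the structure maps, which is a legitimate and welcome expansion rather than a different proof.
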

\begin{proof}
Deferred to Appendix \ref{Appendix:pf_LZ}.
\end{proof}

Since $L_Z$ contains all of the pertinent linear isomorphisms $\mathcal{L}^{\psi}$, we can enumerate the terminal  classes of interest via $T = \{[B^{\psi} \cdot L \cdot \vec{e}^{\;\psi}_{\tau}] \in \HH_k(Z^{\psi}) \; | \; L \in L_Z \} $, where $B^{\psi}$ denotes the matrix representation of $\mathcal{B}^{\psi}$. Step (2) of Algorithm \ref{alg:bar_to_bar_extension} performs this process\footnote{Since we are working with finite fields, it is possible to enumerate $L_Z$.}.

\medskip 
Once we find the collection $T$, we apply the cycle-to-cycles extension method (Algorithm \ref{alg:cycle_to_cycle_extension}) to each terminal class in $T$, which results in a collection of cycle extensions $[y] \in \HH_k(Y^\ell)$ at various parameters $\ell$. The final step we need to discuss is finding all bar representations of cycle extensions.

\subsubsection{Finding all bar-representations of a cycle extension.}
\label{sec:alt_decomp_two}
Let $[y] \in \HH_k(Y^{\ell})$ be a cycle extension output from Algorithm \ref{alg:cycle_to_cycle_extension}. Fix some interval decomposition $\mathcal{D}$ of $P\HH_k(\filtTwo)$, and let $S^{\mathcal{D}}_{[y]}$ denote the $\mathcal{D}$-bar representation of $[y]$. Our goal is to find all $\mathcal{G}$-bar representation of $[y]$ for all possible interval decompositions $\mathcal{G}$ of $P\HH_k(Y^\bullet)$. 

Any interval decomposition $\IntDecYTwo$ of $P\HH_k(\filtTwo)$ can be obtained
by $\IntDecYTwo= \IntDecY \circ \mathcal{L}^{-1},$ where $\mathcal{L}$ is an automorphism of $ \mathbb{I}_{\bc_k(\filtTwo)}.$ Given $\mathcal{L}^{\ell}$, we can find the $\mathcal{G}$-bar representation of $[y]$ from $S^{\mathcal{D}}_{[y]}$ as follows. Let $D^{\ell}$, $G^{\ell}$, and $L^{\ell}$ each denote the matrix representing the linear isomorphism  $\mathcal{D}^{\ell}$, $\mathcal{G}^{\ell}$, and $\mathcal{L}^{\ell}$. Let $S^{\IntDecY}_{[y]} = \{ \IntDecYLower_1 \intY_{j_1}, \dots, \IntDecYLower_t \intY_{j_t} \}$. That is, $d_1[\gamma_{j_1}^{\mathcal{D},\ell}] + \dots +  d_t[\gamma_{j_t}^{\mathcal{D},\ell}] = [y]$ in $\HH_k(Y^\ell)$ for nonzero $d_1, \dots, d_t$, i.e., 
\[D^{\ell}(\IntDecYLower_1\vec{e}^{\; \ell}_{{j_1}} + \dots + \IntDecYLower_t \vec{e}^{\; \ell}_{{j_t}}) = [y] \text{ in } \HH_k(Y^\ell) \text{ for nonzero }d_1, \dots, d_t,\]
where $\vec{e}^{\;\ell}_j$ is the basis vector for $\gamma_j$ at parameter $\ell$. Note that $S^{\IntDecYTwo}_{[y]} = \{ c_{1} \intY_{i_1}, \dots, c_{s} \intY_{i_s} \}$ if $G^{\ell}(c_{1} \vec{e}^{\; \ell}_{i_1} + \dots + c_{s} \vec{e}^{\; \ell}_{i_s}) = [y]$ for nonzero $c_1, \dots, c_s$. Since $L^{\ell} = ({G^{\ell}})^{-1} \circ D^{\ell}$, this means that $L^{\ell}(\IntDecYLower_1 \vec{e}^{\; \ell}_{j_1} + \dots + \IntDecYLower_t \vec{e}^{\; \ell}_{j_t}) = c_{1} \vec{e}_{i_1}^{\, \ell} + \dots + c_{s} \vec{e}_{i_s}^{\, \ell}$. Thus, given the matrix $L^{\ell}$ and $S^{\IntDecY}_{[y]}$, we can directly compute $S^{\IntDecYTwo}_{[y]}$.

\begin{algorithm}[t]
\caption{Extension method (bar-to-bars extensions)}
\label{alg:bar_to_bar_extension}
\textbf{Input}:
\begin{itemize}
    \item filtered simplicial complexes $\filtOne$, $\filtTwo$ on vertex set $P$, 
    \item a bar $\tau \in \bc_k(\filtOne)$
    \end{itemize}
\textbf{Output}:
\begin{itemize}
    \item a collection $E(\tau, \filtTwo)$ of all cycle extensions of $\tau$ to $P\HH_k(\filtTwo),$ and
    \item a collection $S(\tau, \filtTwo)$ of all bar extensions of $\tau$ to $\bc_k(\filtTwo).$
\end{itemize}
\textbf{Steps:}
\begin{enumerate}
\item Fix interval decompositions $\IntDecZ$ of $P\HH_k(\filtOne)$ and $\IntDecY$ of $P\HH_k(\filtTwo).$
\item 
Fix an ordering on $\bc_k(Z^{\delta(\tau)-1}) = \{ \tau_1, \dots, \tau_m \}$. 
\item Enumerate the collection of matrices
\[L_Z = \{ L \in \field^{m \times m} \; | \; L_{r,r} \neq 0, L_{r,c} = 0 \text{ if } \beta(\tau_r) > \beta(\tau_c) \text{ or } \delta(\tau_r) > \delta(\tau_c) \text{ for all } r,c \}.  \]
\item 
Enumerate the collection of possible terminal classes $$T = \{[B^{\delta(\tau)-1} \cdot L \cdot \vec{e}^{\; \delta(\tau)-1}_{\tau}] \in \HH_k(Z^{\delta(\tau)-1}) \; | \; L \in L_Z \} ,$$ where $ \vec{e}^{\; \delta(\tau)-1}_{\tau}$ is the basis vector for $\tau$ at parameter $\delta(\tau)-1$ (Definition \ref{def:basis_vector_for_bar}) and $B^{\delta(\tau)-1}$ denotes the matrix representation of $\mathcal{B}^{\delta(\tau)-1}$.
\item For each $[\tau] \in T$,
\begin{enumerate}
    \item Let $p_{\SCTwo, [\tau]}$ and $\cealgo_{[\tau]}$ be the output of cycle-to-cycles extension (Algorithm \ref{alg:cycle_to_cycle_extension}) with inputs $\filtOne$, $\filtTwo$, $\delta(\tau)-1$ and $[\tau].$  
\item For each $\ell \in p_{Y, [\tau]}$,
    \begin{enumerate}
    \item Fix an ordering on $\bc_k(Y^{\ell}) = \{ \gamma_1, \dots, \gamma_n\}$.
    \item Enumerate the collection of matrices
    \[L_Y = \{ L \in \field^{n \times n } \mid  L_{r,r} \neq 0, L_{r,c} = 0 \text{ if } \beta(\gamma_r) > \beta(\gamma_c) \text{ or } \delta(\gamma_r) > \delta(\gamma_c) \text{ for all } r,c \} \]
    \item For each $[y] \in (\cealgo_{[\tau]})_{\ell}$, compute the $\mathcal{D}$-bar representation $S^{\mathcal{D}}_{[y]} = \{ d_1 \gamma_{j_1},  \cdots, d_t \gamma_{j_t} \}$. 
    \item For each $L \in L_Y$, compute 
    \[S^{\mathcal{D} \circ L^{-1}}_{[y]} = \{ c_{1} \intY_{i_1}, \dots, c_{s} \intY_{i_s} \;|\;L(d_1 \vec{e}_{j_1}^{\; \ell} + \dots + d_t \vec{e}_{j_t}^{\; \ell}) = c_{1} \vec{e}_{i_1}^{\; \ell} + \dots + c_{s} \vec{e}_{i_s}^{\; \ell} \},\]
    where $\vec{e}^{\;\ell}_j$ is the basis vector for $\gamma_j$ at parameter $\ell$ (Definition \ref{def:basis_vector_for_bar}).
    \end{enumerate}     
    
\end{enumerate}
\item Return $E(\tau, \filtTwo) = \bigcup_{[\tau] \in T} \cealgo_{[\tau]}$ and $S(\tau, \filtTwo) = \{S_{[y]}^{\IntDecY\circ L^{-1}} \;|\; [\tau] \in T, \,\ell \in p_{Y, [\tau]}, \,
[y] \in (\cealgo_\mathcal{[\tau]})_\ell,\, L \in L_Y\}.$
\end{enumerate}
\end{algorithm}

\begin{algorithm}[t]
\caption{Extension method (bar-to-bars extension, $\field_2$-coeffs and unique death times)}
\label{alg:bar_to_bar_extension_F2}
\textbf{Input and Output:}\\
\hspace{0.6cm} As bar-to-bars extension (Algorithm \ref{alg:bar_to_bar_extension}.)

\textbf{Steps:}
\begin{enumerate}

\item Fix interval decompositions $\IntDecZ$ of $P\HH_k(\filtOne)$ and $\IntDecY$ of $P\HH_k(\filtTwo).$
\item Perform step (5) of bar-to-bars extension (Algorithm \ref{alg:bar_to_bar_extension}) with $T = \{ [\tau_\ast^\IntDecZ] \}.$


\item Return $E(\tau, \filtTwo) = \cealgo_{[\tau_*^{\mathcal{B}}]}$ and $S(\tau, \filtTwo) = \{S^{\;\IntDecY \circ L^{-1}}_{[y]} \;|\; \ell \in p_Y, [y] \in (\cealgo_{[\tau_*^{\mathcal{B}}]} )_{\ell}, L \in L_Y \}.$
\end{enumerate}
\end{algorithm}

As in the previous case, we can enumerate  $L^{\ell}$. Let $n = \dim \HH_k(Y^\ell)$ and fix an order of the bars $\bc_k(Y^\ell) = \{ \gamma_1, \dots, \gamma_n \}$. Enumerate $L_Y$:
\begin{align}
L_Y = \{ L \in \field^{n \times n} \; | & \; L_{r,r} \neq 0, L_{r,c} = 0 \text{ if } \beta(\gamma_r) > \beta(\gamma_c) \text{ or } \delta(\gamma_r) > \delta(\gamma_c) \nonumber \\
& \text{ for all } r,c \}. \label{eq:L_Y}
\end{align}
Lemma \ref{lemma:LZ}  showed that $L_Y$ finds all $\mathcal{L}^{\ell}$ that are restrictions of a persistence module automorphism $\mathcal{L}$ to the parameter $\ell$. For each $L \in L_Y$, we can compute the bar representation of $[y]$ under a new interval decomposition as 
\begin{align*}
S^{\mathcal{D} \circ L^{-1}}_{[y]} = \{ c_{1} \intY_{i_1}, \dots, c_{s} \intY_{i_s} \;| & \;L(d_1 \vec{e}_{j_1}^{\; \ell} + \dots + d_t \vec{e}_{j_t}^{\; \ell}) = c_{1} \vec{e}_{i_1}^{\; \ell} + \dots + c_{s} \vec{e}_{i_s}^{\; \ell} \\
& \text{ for } c_1, \dots, c_s \neq 0 \}.  
\end{align*}
We return the collection of all bar representations of $[y]$ 
\[S_{[y]} = \{S^{\mathcal{D} \circ L^{-1}}_{[y]} \; | \; L \in L_Y \}. \]
Step (5-b) of Algorithm \ref{alg:bar_to_bar_extension} performs this process.

\medskip

Lemma \ref{lemma:LZ} guarantees that the enumerations in Sections \ref{sec:alt_decomp_one} and \ref{sec:alt_decomp_two} find all terminal classes and bar-representations. Combining the two, we obtain Algorithm \ref{alg:bar_to_bar_extension}, which provides a complete enumeration of bar extensions of $\selectIntX \in \bc_k(\filtOne)$ to $\bc_k(\filtTwo)$. In step (1), we fix initial interval decompositions $P\HH_k(Z^\bullet)$ and $P\HH_k(Y^\bullet)$. In steps (2) to (4), we compute all terminal classes of bar $\tau$. In step (5), we compute the cycle extensions of each terminal class and find their bar representations in $\bc_k(Y^\bullet)$.

\subsubsection{The case of $\field_2$-coefficients and bars with unique death parameters}
\label{simplifying_assumptions}
Algorithm \ref{alg:bar_to_bar_extension} can be substantially simplified when we make two assumptions that are often satisfied when working with real data: that homology is computed with $\field_2$ coefficients, and that all bars in $\bc_k(\filtOne)$ have unique death parameters. Under such assumptions, the following lemma says we can omit the loop over interval decompositions of $P\HH_k(\filtOne)$. 

\begin{restatable}{lemma}{welldefinedcycles}
\label{lemma:well_defined_cycles} 
Assume that $\field = \field_2$ and that all bars of $\bc_k(\filtOne)$ have unique death parameters. Let $\mathcal{B}, \mathcal{C}: \mathbb{I}_{\bc_k(\filtOne)} \to P\HH_k(\filtOne)$ be two different interval decompositions. Given a bar $\tau \in\bc_k{(\SCOne^\bullet)}$, let $[\tau^{\mathcal{B}}_*]$ and $[\tau^{\mathcal{C}}_*]$ denote the terminal class for $\tau$ under $\mathcal{B}$ and $\mathcal{C}$ respectively. Then, $[\tau^{\mathcal{B}}_*] = [\tau^{\mathcal{C}}_*]$ in $H_k(\SCOne^{\delta(\tau)-1})$. 
\end{restatable}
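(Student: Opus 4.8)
The plan is to use the change-of-basis lemmas (Lemmas~\ref{lemma:basis_change} and \ref{lemma:diagonal}) to control the matrix $L^\psi$ at the parameter $\psi = \delta(\tau)-1$, and then to exploit the $\field_2$ hypothesis together with the uniqueness of death parameters to show that this matrix acts trivially on the $\tau$-component modulo classes that have already died out at $\psi$. Write $\mathcal{C} = \IntDecZ \circ \mathcal{L}$ for the barcode-module automorphism $\mathcal{L}$ relating the two decompositions, and let $L^\psi$ be the matrix of $\mathcal{L}^\psi$ with respect to the basis $\{\vec{e}^{\;\psi}_{\sigma}\}_{\sigma \in \bc_k(Z^\psi)}$. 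Then $[\tau^{\mathcal{C}}_*] = [\IntDecZ^\psi(L^\psi \vec{e}^{\;\psi}_\tau)] = \sum_{\sigma \in \bc_k(Z^\psi)} L^\psi_{\sigma,\tau}\, [\sigma^{\IntDecZ,\psi}]$, so it suffices to show that $L^\psi_{\tau,\tau} \neq 0$ and that $L^\psi_{\sigma,\tau} = 0$ for every $\sigma \neq \tau$ alive at $\psi$.

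The first point is immediate: by Lemma~\ref{lemma:diagonal} (applicable since all bars have unique birth--death pairs, which follows from unique death parameters here, or rather from our standing assumption) every diagonal entry $L^\psi_{j,j}$ is nonzero, so $L^\psi_{\tau,\tau} \neq 0$; with $\field_2$ coefficients this forces $L^\psi_{\tau,\tau} = 1$. For the off-diagonal entries, suppose $\sigma \neq \tau$ is alive at $\psi = \delta(\tau)-1$ and $L^\psi_{\sigma,\tau} \neq 0$. By Lemma~\ref{lemma:basis_change}, the bars $\sigma$ (playing the role of $\tau_r$) and $\tau$ ($=\tau_c$) must satisfy $\beta(\sigma) \leq \beta(\tau) < \delta(\sigma) \leq \delta(\tau)$. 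Here is where the unique-death hypothesis enters: since $\sigma \neq \tau$, we cannot have $\delta(\sigma) = \delta(\tau)$, so the inequality is strict, $\delta(\sigma) < \delta(\tau)$, i.e.\ $\delta(\sigma) \leq \delta(\tau)-1 = \psi$. But then $\sigma$ is \emph{not} alive at $\psi$ (a bar $\sigma$ is alive at $\ell$ only when $\beta(\sigma) \leq \ell < \delta(\sigma)$), contradicting our assumption. Hence $L^\psi_{\sigma,\tau} = 0$ for all $\sigma \neq \tau$ alive at $\psi$, and the sum above collapses to $[\tau^{\mathcal{C}}_*] = [\tau^{\IntDecZ,\psi}] = [\tau^{\IntDecZ}_*]$.

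The only subtlety to handle carefully is the bookkeeping around which bars index the basis of $(\mathbb{I}_{\bc_k(\filtOne)})^\psi$ — namely only those in $\bc_k(Z^\psi)$ — so that when we expand $L^\psi \vec{e}^{\;\psi}_\tau$ we really are summing only over bars alive at $\psi$, and the argument above eliminates every term but $\tau$ itself. I expect the main obstacle, such as it is, to be purely notational: making sure that the matrix $L^\psi$ appearing in the restriction of $\mathcal{L}$ to level $\psi$ is indexed consistently with Definition~\ref{def:basis_vector_for_bar} and that the application of Lemma~\ref{lemma:basis_change} at level $\psi$ is legitimate (it is, since that lemma applies at every $\ell$). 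No genuinely hard step arises; the $\field_2$ assumption is only needed to conclude $L^\psi_{\tau,\tau} = 1$ rather than merely nonzero, and the uniqueness of death parameters is exactly what upgrades the weak inequality $\delta(\sigma) \leq \delta(\tau)$ from Lemma~\ref{lemma:basis_change} to the strict inequality that kills the off-diagonal contributions.
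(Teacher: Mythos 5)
Your proof is correct and follows essentially the same route as the paper's: both reduce the claim to showing that the $\tau$-column of the change-of-basis matrix $L^{\psi}$ at $\psi=\delta(\tau)-1$ is the standard basis vector, using Lemma~\ref{lemma:diagonal} (plus $\field_2$) for the diagonal entry and Lemma~\ref{lemma:basis_change} together with uniqueness of death parameters to kill the off-diagonal entries. The only difference is cosmetic — the paper works with $\mathcal{L}=\mathcal{C}^{-1}\circ\mathcal{B}$ and reads off the first column of $L^\psi$ in coordinates, whereas you expand $[\tau^{\mathcal{C}}_*]$ directly in the $\mathcal{B}$-basis — so no further comment is needed.
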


\begin{proof}
Deferred to Appendix \ref{Pf_well_defined_cycles}.
\end{proof}

Algorithm \ref{alg:bar_to_bar_extension_F2} describes the modified bar-to-bars extension method under these assumptions.

\begin{example}
\label{ex:bars-to-bars}
Recall the point clouds $P$ and $Q$ on the torus from Example \ref{ex:cycle-to-cycles} and Figure \ref{fig:ex_alg_1}, and again let $Z^\bullet = W^\bullet_{P,Q}$, and $Y^\bullet = X^\bullet_P$. Let $\tau$ be the highlighted bar in $\bc_1(W^\bullet_{P,Q})$, as shown in Figure \ref{fig:ex_alg_3}. We will apply Algorithm \ref{alg:bar_to_bar_extension_F2} to determine how the selected bar $\tau$ can be related to bars in $\bc_1(X^\bullet_P)$. 

In step (1), we fix the interval decompositions of $P\HH_k(Z^\bullet)$ and $P\HH_k(Y^\bullet)$. In step (2), we find the terminal class $[\tau_*]$ of $\tau$; the cycle representative depicted in the left panel of Figure \ref{fig:ex_alg_1} is a representative of this $[\tau_*]$. We apply Algorithm \ref{alg:cycle_to_cycle_extension} to this terminal class and find all cycle extensions. Finally, we enumerate possible bar representations of the resulting cycle extensions in $\bc_1(\filtTwo)$. The bottom pane of Figure \ref{fig:ex_alg_3} illustrates two possible bar-representations of some cycle extension $[y]$. The union of the green and purple bars illustrates the $\mathcal{D}$-bar representation $S^{\mathcal{D}}_{[y]}$ under some fixed interval decomposition $\mathcal{D}$. The collection of green bars represent an alternative bar-representation $S^{\mathcal{D}\circ L^{-1}}_{[y]}$ for some $L \in L_Y$.

\begin{figure}
\centering
\floatbox[{\capbeside\thisfloatsetup{capbesideposition={right,top},capbesidewidth=0.5\textwidth,capbesidesep=mysep}}]{figure}[\FBwidth]
{\caption{\textbf{Input and output of the bar-to-bars extension in Example \ref{ex:bars-to-bars}.} (top) Barcode $\bc_1(W^\bullet_{P,Q})$ for the witness complex of point clouds $P$ and $Q$ from Figure \ref{fig:ex_alg_1}. We apply the bar-to-bars extension method to the blue long bar. (bottom) Barcode for the clique complex of the point cloud $P$ from Figure \ref{fig:ex_alg_1}. Green and purple bars together are the $\mathcal{D}$-bar representative $S^{\mathcal{D}}_{[y]}$ produced by Algorithm 3. Omitting the purple bar produces the alternative  bar representation $S^{\mathcal{D}\circ L^{-1}}_{[y]}$ for one choice of $L \in L_Y.$}
\label{fig:ex_alg_3}
}{
    \includegraphics[width=0.5\textwidth]{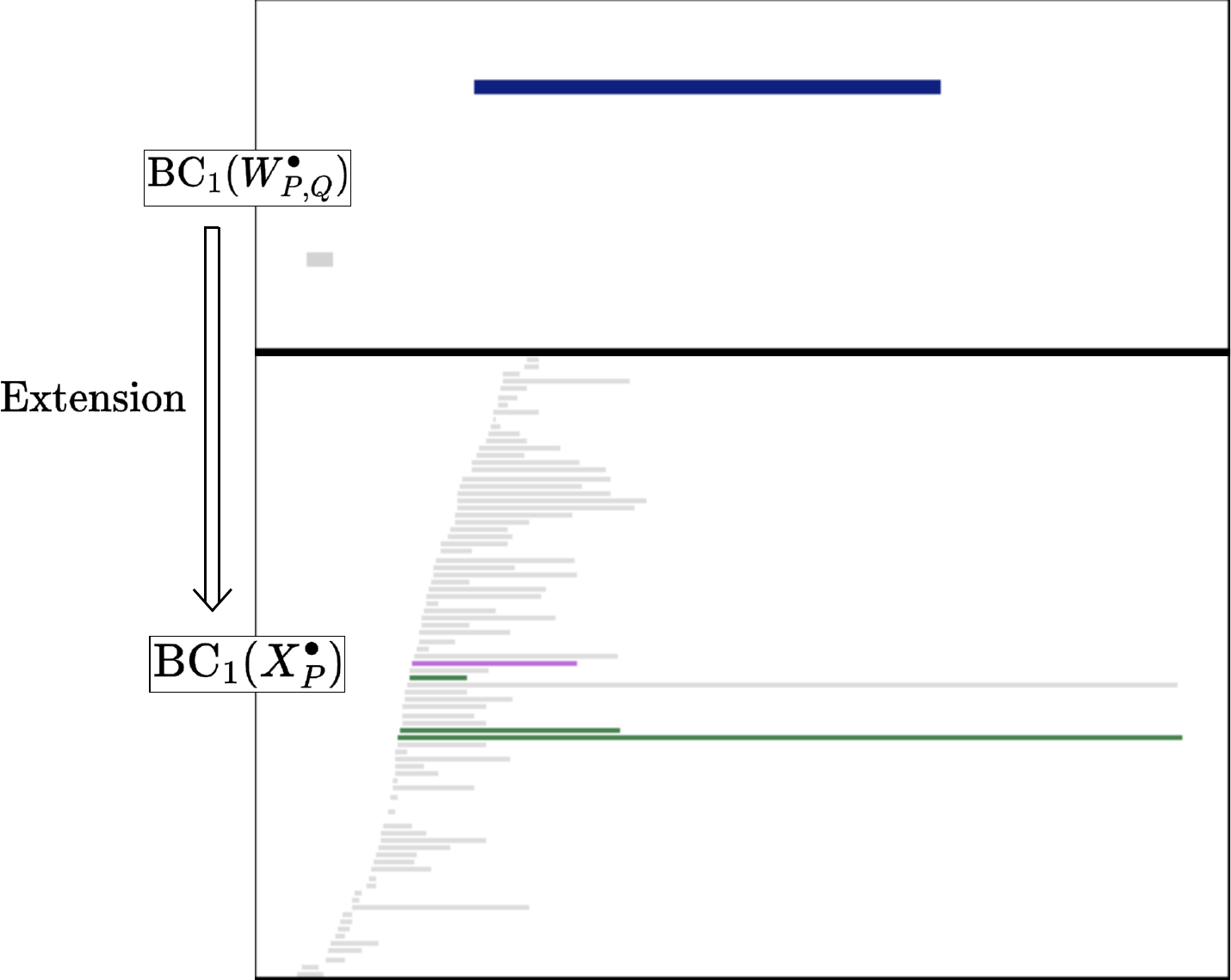}}
 \end{figure}

\end{example}

\subsection{Component-wise extension}
\label{variations}
The process of computing the $\mathcal{D}$-bar representation $S^\IntDecY_{[y]}$ can be  computationally intensive. In step (5-b-iii) of Algorithm \ref{alg:bar_to_bar_extension}, we find the bar extension $S^{\IntDecY}_{[y]}$ for every $[y] \in (\cealgo_{[\tau]})_{\ell}$. From steps (3-b-iii) - (3-b-iv) of Algorithm \ref{alg:cycle_to_cycle_extension}, we know that any $[y] \in (\cealgo_{[\tau]})_{\ell}$ has the form
\begin{equation}
[y] = \mapy_{\ell} \Bigg(  \sum_{i=1}^m\IntDecDeltaLower^*_i[\intXdelta_i^{\IntDecDelta, \ell}] \Bigg) + \sum_{[\rho^{\mathcal{F}, \ell}_j] \in 
V_{\text{short}}^{\ell}}  \mapy_{\ell} \Bigg(\IntDecDeltaLower_j [\rho^{\mathcal{F}, \ell}_j] \Bigg)
\end{equation}
for some $\IntDecDeltaLower_j \in \field$. Note that $f_1^*, \dots, f_m^* \in \field$ are fixed from $S^{\mathcal{F}}_{[\tau]} = \{f_1^* \rho_1, \dots, f_m^* \rho_m \}$ in step (2-a) of Algorithm \ref{alg:cycle_to_cycle_extension}. Thus, the number of cycle extensions $[y] \in (\cealgo_{[\tau]})_{\ell}$ we must consider grows exponentially as $|\field|^{|\Rho^{\ell}_{\text{short}}|}$.

Fortunately, we can leverage the vector space structure on homology to mitigate the resulting explosion in computation time. Suppose that $[y_1], [y_2] \in \HH_k(\SCTwo^\ell)$ and define 
$S^{\IntDecY}_{[y_1]} \oplus S^{\IntDecY}_{[y_2]} = S^{\IntDecY}_{[y_1+y_2]}.$ From Definition \ref{def:bar_representation_of_class}, we see that this operation defines a $\field$-vector space structure on the set $\{S^{\IntDecY}_{[y]} \;|\; [y] \in \HH_k(\SCTwo^\ell)\}$, and that the collection of $\mathcal{D}$-bar representations of $(\cealgo_{[\tau]})_{\ell}$ can be regarded as an affine subspace of this space.

Therefore, instead of computing bar representations for every $[y] \in (\cealgo_{[\tau]})_{\ell}$, we can compute bar representations $S^{\IntDecY}_{\mapy_{\ell}(\sum_{i=1}^m\IntDecDeltaLower^*_i[\intXdelta_i^{\IntDecDelta, \ell}] )}$ and $S^{\IntDecY}_{\mapy_{\ell}[\rho^{\mathcal{F},\ell}]}$ for each $[\rho^{\mathcal{F}, \ell}] \in V^{\ell}_{\text{short}}$. We'll refer to 
$$ B = \mapy_{\ell} \Big(\sum_{i=1}^m\IntDecDeltaLower^*_i[\intXdelta_i^{\IntDecDelta, \ell}] \Big) \text{ and }O =  \{  \mapy_{\ell} \Big( [\rho^{\mathcal{F},\ell}] \Big) \; | \; [\rho^{\mathcal{F},\ell}] \in V^{\ell}_{\text{short}}\} $$ as the \emph{baseline} and \emph{offset cycle extensions}, respectively, with $(\cealgo_{[\tau]})_{\ell} = B \oplus \text{span}_\field O$. We then take 
$$B^{\mathcal{D}} =  S^{\IntDecY}_{\mapy_{\ell}(\sum_{i=1}^m\IntDecDeltaLower^*_i[\intXdelta_i^{\IntDecDelta, \ell}] )} \text{ and  } O^{\mathcal{D}} = \{S^{\IntDecY}_{\mapy_{\ell}([\rho^{\mathcal{F},\ell}])} \;|\; [\rho^{\mathcal{F},\ell}] \in V^{\ell}_{\text{short}}\} $$
to be the collections of   \emph{baseline} and  \emph{offset bar extensions}. Note that $\{S^{\mathcal{D}}_{[y]} \; | \; [y] \in (\cealgo_{[\tau]})_{\ell} \} = B^{\mathcal{D}} \oplus \text{span}_\field O^{\mathcal{D}}$.
Perhaps unsurprisingly, we have found it simpler and faster to visualize the sets $B^{\mathcal{D}}$ and $O^{\mathcal{D}}$ for understanding extensions in examples. Note that the individual members of the baseline and offset cycle extensions depend in on the interval decomposition $\mathcal{F}$. However, when the baseline and offset cycle extensions are considered collectively, the collection of cycle extensions is independent of $\mathcal{F}$ according to Theorem \ref{restrictalgo_basis_independence}. Similarly, the collection of baseline and offset bar extensions is independent of $\mathcal{F}$. 

\begin{remark}
When the number of bars in the barcode is large, the number of linear systems we must solve to carry out the extension method can be prohibitive, even when we consider only these basis elements. Therefore, it may be useful to omit classes of bars based on length statistics for a null model or domain knowledge. We have found this to be a useful strategy in practice, but leave a detailed discussion for future work.
\end{remark}

\section{Analogous bars}
\label{analogous_intervals}

With the  persistent extension method developed in Section \ref{Extension} in hand, we can move on to addressing the motivating question for this paper. Let $Q = \{q_1, \dots, q_m \}$ and $P= \{p_1, \dots, p_n \}$ be two populations equipped with dissimilarity matrices $M_Q$ and $M_P,$ as well as a cross-dissimilarity matrix $M_{Q,P}$. Let $X^\bullet_Q$ and $X^\bullet_P$ each denote the Vietoris-Rips complexes on $Q$ and $P$. Given a bar  $\selectIntX\in \bc_k(X^\bullet_Q)$, our goal is to find all persistent 1-dimensional subspaces of $P\HH_k(X^\bullet_P)$, represented as collections of bars in $\bc_k(X^\bullet_P),$ which could correspond to features similar to $\tau$ under the information provided by the  cross-dissimilarity measure.
 

As discussed in Section \ref{approaches}, we will encode the information in $M_{Q,P}$ as a witness complex and utilize the isomorphism in Dowker's theorem to identify $\bc_k(W_{Q,P}^\bullet)$ and $\bc_k(W_{P,Q}^\bullet).$ We then apply the persistent extension method to associate bars in $\bc_k(X_Q^\bullet)$ and $\bc_k(X_P^\bullet)$ to those in the witness barcodes. There are two perspectives that we can take. 
\begin{enumerate}
    \item First, a \emph{feature-centric} approach, where the goal is to study how a particular bar $\selectIntX \in \bc_k(X_Q^\bullet)$ is represented in $\bc_k(X_P^\bullet)$. To streamline this approach, we give two truncated variations of Algorithm \ref{alg:bar_to_bar_extension}. Algorithm \ref{alg:bar_to_cycle_extension}, the bar-to-cycle extension method, returns only the collection of cycle extensions, thereby eliminating the need to iterate over the interval decompositions of $P\HH_k(\filtTwo)$. Algorithm \ref{alg:cycle_to_bar_extension}, the cycle-to-bar extension method, takes as input an explicit cycle $[\tau] \in \HH_k(Z^{\psi})$, which eliminates the iteration over interval decompositions of $P\HH_k(\filtOne)$. 
    
    Given a bar $\tau \in \bc_k(X^\bullet_Q)$, we first apply Algorithm \ref{alg:bar_to_cycle_extension} to $\selectIntX$, producing a family $E(\selectIntX, W_{Q, P}^\bullet)$ of cycle extensions in $P\HH_k(W^\bullet_{Q,P})$. We find the corresponding collection of cycles $E(\tau, W^\bullet_{P,Q})$ in $P\HH_k(W^\bullet_{P,Q})$ by applying Dowker's Theorem to elements of $E(\selectIntX, W_{Q,P}^\bullet)$. Finally, to each cycle extension $[\sigma] \in E(\selectIntX, W^\bullet_{P,Q})$, we apply Algorithm \ref{alg:cycle_to_bar_extension} to find a family of bar extensions $S(\selectIntX, X^\bullet_P)  = \bigcup_{[\sigma] \in E(\selectIntX, W_{P, Q}^\bullet)} S( [\sigma], X_P^\bullet)$ in $\bc_k(X^\bullet_P)$. Any bar extension in $S([\sigma], X^\bullet_P)\subseteq S( \selectIntX, X^\bullet_P)$ is \emph{analogous to $\tau$ through $[\sigma].$} Algorithm \ref{alg:feature_centric_analogous_bars} provides a complete account of this approach.

    \item Second, a \emph{similarity-centric} approach, where the goal is to understand what relationship a particular bar $\selectIntX \in \bc_k(W^\bullet_{Q,P})$ indicates between bars in $\bc_k(X_Q^\bullet)$ and $\bc_k(X_P^\bullet).$ In this case, we apply Dowker's Theorem to find the corresponding bar $\tau' \in \bc_k(W^\bullet_{P,Q})$, and then we apply the persistent extension method to $\tau$ and $\tau'$ respectively. We end up with a collections of bar extensions $S(\selectIntX, X^\bullet_Q)$ of $\selectIntX$ to $\bc_k(X^\bullet_Q)$ and $S(\selectIntX', X_P^\bullet)$ of $\selectIntX'$ to  $\bc_k(X^\bullet_P)$. Any pair of extensions in $S(\selectIntX, X^\bullet_Q) \times S(\selectIntX', X^\bullet_P)$ can be viewed as \emph{analogous through} $\selectIntX.$ This approach is described in Algorithm \ref{alg:similarity_centric_analogous_bars}.
\end{enumerate}

\setcounter{topnumber}{3}

\begin{algorithm}[H]
\caption{Extension method (bar-to-cycle extension)}
\label{alg:bar_to_cycle_extension}
\textbf{Input}:
\begin{itemize}
   \item filtered simplicial complexes $\filtOne$ and $\filtTwo$ on vertex set $P$, 
   \item a bar $\selectIntX \in \bc_k(\filtOne),$
   \end{itemize}
\textbf{Output}:
\begin{itemize}
    \item 
a family $E(\tau, \filtTwo)$ of collections of cycle extensions of $\tau$ to $P\HH_k(Y^\bullet)$.
\end{itemize}
\textbf{Steps}:
\begin{enumerate}
\item Fix an interval decomposition $\IntDecZ$ of $P\HH_k(\filtOne)$.
\item Perform steps (2) to (5-a) of bar-to-bars extension (Algorithm \ref{alg:bar_to_bar_extension}).
\item Return $E(\tau, \filtTwo) = \bigcup_{[\tau] \in T} \cealgo_{[\tau]}$
\end{enumerate}
\end{algorithm}
\begin{algorithm}[b]
\caption{Extension method (cycle-to-bar extension)}
\label{alg:cycle_to_bar_extension}
\textbf{Input}:
\begin{itemize}
   \item filtered simplicial complexes $\filtOne$ and $\filtTwo$ on vertex set $P$,
   \item a parameter $\psi$, and
   \item a homology class $[\tau] \in \HH_k(Z^{\psi})$.
   \end{itemize}
\textbf{Output}: 
\begin{itemize}
    \item a collection $S([\tau], \filtTwo)$ of bar extensions of $[\tau]$ to $\bc_k(\filtTwo)$.
\end{itemize}
\textbf{Steps}:
\begin{enumerate}
\item Let $p_Y$ and $\cealgo$ be the outputs of cycle-to-cycles extension (Algorithm \ref{alg:cycle_to_cycle_extension}).
\item Fix interval decomposition $\IntDecY$ of $P\HH_k(\filtTwo)$
\item Perform step (5-b) of bar-to-bars extension (Algorithm \ref{alg:bar_to_bar_extension}) with $p_{Y, [\tau]} = p_Y$ and $\cealgo_{[\tau]} = \cealgo$.
\item Return $S([\tau], \filtTwo) = \{S^{\;\IntDecY \circ L^{-1}}_{[y]} \;|\; \ell \in p_Y, [y] \in \cealgo_{\ell}, L \in L_Y \}.$ 
\end{enumerate}
\end{algorithm}
Both methods use Dowker's Theorem to compare $W^\bullet_{Q,P}$ and $W^\bullet_{P,Q}$ and use the extension method to compare the Witness complexes and Vietoris-Rips complexes. Because the involved algorithms enumerate different collections of cycles, in general we expect that the relationship between these two approaches may be quite intricate. 
Nonetheless, we believe it is useful to study how they relate in a  simple example. In the following section, we provide an empirical comparison of the two perspectives on a slightly modified version of our earlier torus example which demonstrates that, in simple settings, the outputs of the two methods can coincide in a reasonable way.
\begin{algorithm}[t!]
\caption{Analogous bars method (feature-centric)}
\label{alg:feature_centric_analogous_bars}
\textbf{Input}:
\begin{itemize}
   \item filtered clique complexes $X_Q^\bullet$ and $X_P^\bullet$, 
   \item cross-dissimilarity witness complexes $W_{Q,P}^\bullet$ and $W_{P,Q}^\bullet$ and
   \item a bar $\selectIntX \in \bc_k(X_Q^\bullet).$
   \end{itemize}
\textbf{Output}:
\begin{itemize}
    \item 
a family $S(\tau, X_P^\bullet)$ of collections of weighted bars  in $\bc_k(X^\bullet_P)$ related to $\tau$ 
\end{itemize}
\textbf{Steps}:
\begin{enumerate}
\item Apply bar-to-cycle extension (Algorithm \ref{alg:bar_to_cycle_extension}) with inputs $\filtOne = X^\bullet_Q$, $\filtTwo = W^\bullet_{Q,P}$, and $\tau$. Denote by $E(\tau, W^\bullet_{Q,P})$ the resulting collection of cycle extensions of $\tau$ in $P\HH_k(W^\bullet_{Q,P}).$
\item Apply Dowker's Theorem to each cycle in $E(\tau, W^\bullet_{Q,P})$ to find the collection
\[E(\tau, W^\bullet_{P,Q}) = \{ [x_P] \in W^\epsilon_{P, Q} \; | \; [x_P] \text{ is Dowker dual to } [x_Q] \in W^\epsilon_{Q,P}, \text{ where } [x_Q] \in E(\tau, W^\bullet_{Q,P}) \}. \]
\item For each $[\sigma] \in E(\tau, W^\bullet_{P, Q})$, 
\begin{enumerate}
    \item Find the death time $\delta([\sigma])$.
    \item Apply cycle-to-bar extension (Algorithm \ref{alg:cycle_to_bar_extension}) with $Z^\bullet = W^\bullet_{P,Q}$, $Y^\bullet = X^\bullet_P$, $\psi = \delta([\sigma]) - 1$, and $[\sigma] \in \HH_k(W^{\psi}_{P,Q})$ as input to find a family of bar extensions $S([\sigma], X^\bullet_P)$ in $\bc_k(X^\bullet_P)$.
\end{enumerate} 
\item Return $S(\tau, X^\bullet_P) = \bigcup_{[\sigma] \in E(\tau, W^\bullet_{P, Q})} S([\sigma], X^\bullet_P)$.
\end{enumerate}
\end{algorithm}

\begin{algorithm}[t!]
\caption{Analogous bars method (similarity-centric)}
\label{alg:similarity_centric_analogous_bars}
\textbf{Input}:
\begin{itemize}
   \item filtered clique complexes $X_Q^\bullet$ and $X_P^\bullet$, 
   \item cross-dissimilarity witness complexes $W_{Q,P}^\bullet$ and $W_{P,Q}^\bullet$, and
   \item a bar $\selectIntX \in \bc_k(W_{Q,P}^\bullet).$
   \end{itemize}
\textbf{Output}:
\begin{itemize}
    \item 
collections $S(\tau, X_Q^\bullet)$ and $S(\tau', X_P^\bullet)$ of weighted bars in $\bc_k(X^\bullet_Q)$ and $\bc_k(X^\bullet_P)$ respectively, for which pairs $(\sigma_Q, \sigma_P) \in S(\tau, X_Q^\bullet) \times S(\tau', X_P^\bullet)$ are related through $\tau$ 
\end{itemize}
\textbf{Steps}:

\begin{enumerate}
\item Apply Dowker's Theorem to $\tau$ to find the corresponding bar $\tau' \in \bc_k(W_{P,Q}^\bullet).$
\item Apply bar-to-bars extension (Algorithm \ref{alg:bar_to_bar_extension}) with $\filtOne = W^\bullet_{Q,P}$, $\filtTwo = X^\bullet_Q$, and $\tau$ as input and output a family $S(\tau, X^\bullet_Q)$ of bar extensions of $\tau$ in $\bc_k(X^\bullet_Q).$
\item Apply bar-to-bars extension (Algorithm \ref{alg:bar_to_bar_extension}) with $\filtOne = W^\bullet_{P,Q}$, $\filtTwo = X^\bullet_P$, and $\tau'$ as input and output a family $S(\tau', X^\bullet_P)$ of bar extensions of $\tau'$ in $\bc_k(X^\bullet_P).$
\item Return $S(\tau, X^\bullet_Q)$ and $S(\tau', X^\bullet_P).$
\end{enumerate}
\end{algorithm}
\begin{example}

Take $P$ to be the collection of points sampled uniformly from a flat torus considered in Examples \ref{ex:cycle-to-cycles} and \ref{ex:bars-to-bars}, and let $Q$ be a set of points sampled along a deformation of one of its essential circles, as illustrated in Figure \ref{fig:feature_centric_example}. We have chosen $Q$ to take a substantial detour from the usual ``taut" essential circle so that both the Vietoris-Rips complex $X_Q^\bullet$ and the witness complex $W^\bullet_{Q,P}$ will, for some range of parameters, contain homology classes representing both of the essential circles in the torus, but that the primary feature will be the vertical circle. The left panel in Figure \ref{fig:feature_centric_example} shows the three relevant barcodes $\bc_1(X^\bullet_Q)$, $\bc_1(W^\bullet_{Q,P}) = \bc_1(W^\bullet_{P,Q})$, and $\bc_1(X^\bullet_Q)$. Note that all bars have unique death times.\\

\setcounter{topnumber}{1}

\begin{figure}[t]
\centering
\includegraphics[width=0.9\textwidth]{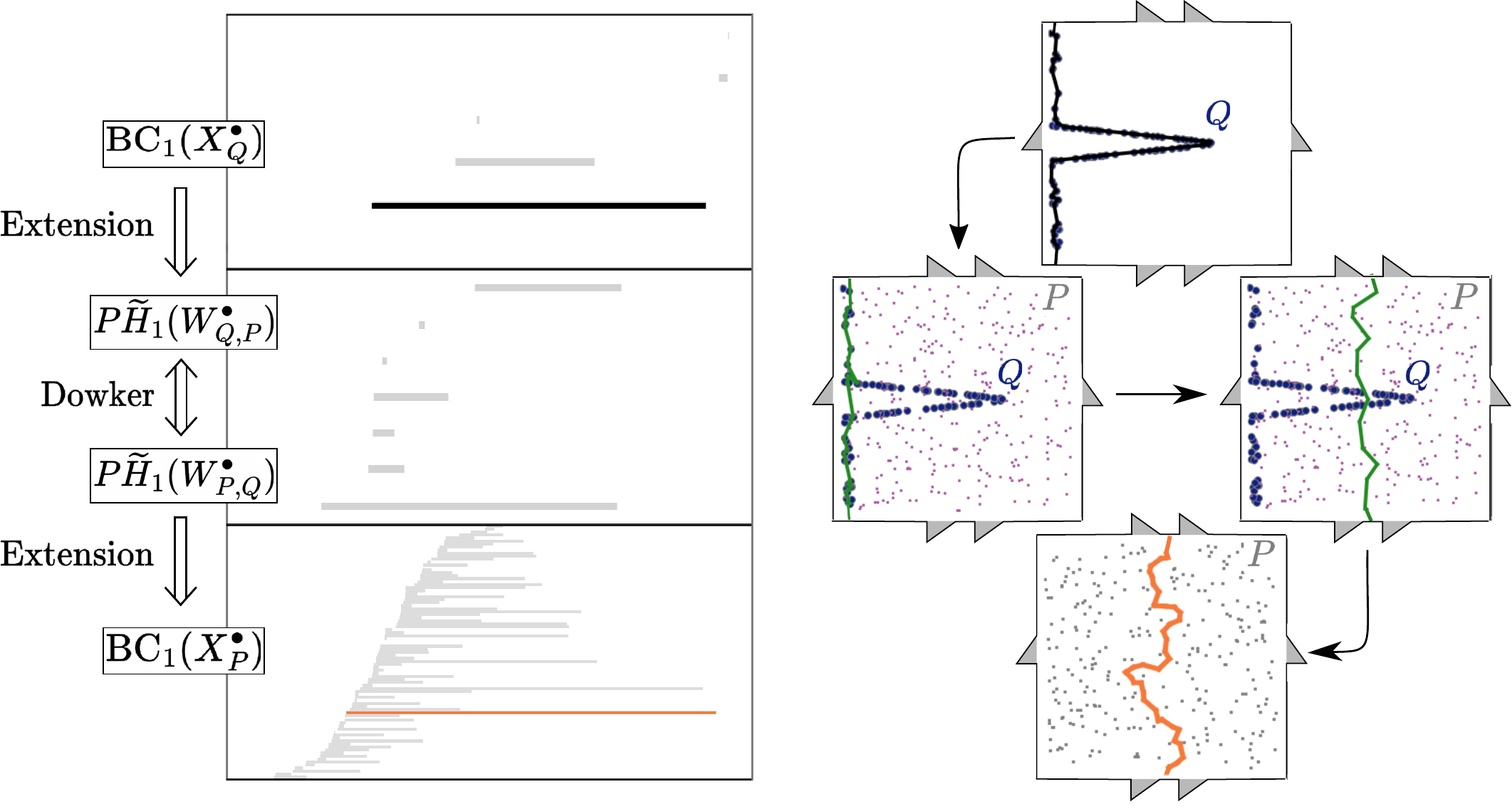}
\caption{\textbf{Illustration of the feature-centric analogous bars method.} Let $P$ be a point cloud sampled from a torus from Example \ref{ex:cycle-to-cycles}, and let $Q$ be a deformation of an essential circle that witnesses both essential circles of the torus. (left column) Beginning with the black bar in $\bc_1(X^\bullet_Q)$, apply persistent extension to obtain cycle extensions in $W^\bullet_{Q,P}$. Using the isomorphism in Dowker's theorem, find the corresponding cycle extension in $W^\bullet_{P,Q}.$ Apply persistent extension again to obtain the orange bar in $\bc_1(X^\bullet_P)$. The highlighted bars in $\bc_1(X^\bullet_Q)$ and $\bc_1(X^\bullet_P)$ are analogous bars. (right column) Cycle representatives illustrating the classes obtained in this example. (top) Chain in $X^\bullet_Q$ representing the long bar in $\bc_1(X^\bullet_Q).$ (middle-left) Cycle representative in $W_{Q,P}^\bullet$ obtained by persistent extension of the bar in the top row is transformed into (middle-right) a cycle in $W_{P,Q}^\bullet,$ supported on $P,$ via Dowker's theorem. (bottom) Applying persistent extension to this cycle, we obtain a corresponding cycle in $X^\bullet_P.$ }
\label{fig:feature_centric_example}
\end{figure}

\noindent \textit{Feature-centric analogous bars:} We will apply the feature-centric analogous bars method (Algorithm \ref{alg:feature_centric_analogous_bars}) to investigate the representation of the longest bar $\tau \in \bc_1(X^\bullet_Q)$ in $\bc_1(X^\bullet_P)$. 
Step (1) is to apply the bar-to-cycle extension (Algorithm \ref{alg:bar_to_cycle_extension}) to find all cycle extensions $E(\tau, W^\bullet_{Q,P})$. Since we are working with $\field_2$ coefficients, the terminal class $[\tau_*] \in \HH_1(X^{\delta(\tau)-1})$ is independent of the interval decomposition of $P\HH_1(X^\bullet_Q)$ by Lemma \ref{lemma:well_defined_cycles}. Algorithm \ref{alg:bar_to_cycle_extension} simplifies since $T = \{ [\tau_*] \}$. The left-middle panel in the right column in Figure \ref{fig:feature_centric_example} illustrates an example cycle in the cycle extension $[x_Q] \in E(\tau, W^\bullet_{Q,P})$. 

In step (2) of Algorithm \ref{alg:feature_centric_analogous_bars}, we find the family of cycles that are dual to $E(\tau, W^\bullet_{Q,P})$ via Dowker's Theorem. The right-middle panel in the right column of Figure \ref{fig:feature_centric_example} illustrates the dual $[x_P] \in E(\tau, W^\bullet_{P,Q})$ to the previously illustrated $[x_Q]$.

In step (3) of Algorithm \ref{alg:feature_centric_analogous_bars}, for each cycle extension $[\sigma] \in E(\tau, W^\bullet_{P, Q})$, we apply the cycle-to-bar extension method (Algorithm \ref{alg:cycle_to_bar_extension}) to find the bar extensions $S([\sigma], X^{\bullet}_P)$ in $\bc_1(X^\bullet_P)$. The baseline bar extension is illustrated on $\bc_1(X^\bullet_P)$ in the left column of Figure \ref{fig:feature_centric_example}. The highlighted bars in $\bc_1(X^\bullet_Q)$ and $\bc_1(X^\bullet_P)$ are analogous through $[\sigma]$. The bottom panel in the right column of Figure \ref{fig:feature_centric_example} illustrates the corresponding cycle representative of the baseline bar extension.

\medskip

\noindent \textit{Similarity-centric analogous bars:} We now illustrate the similarity-centric analogous bars method on the same dataset. The presence of a long bar in $\bc_1(W^\bullet_{P,Q})$ in Figure \ref{fig:similarity_centric_example} suggests that there is a significant feature present in both $P$ and $Q$. Thus, we select the long interval $\tau \in \bc_1(W^\bullet_{Q,P})$ to which to apply the similarity-centric analogous bars method (Algorithm \ref{alg:similarity_centric_analogous_bars}).

We apply Dowker's Theorem to identify the corresponding bar $\tau' \in \bc_1(W^\bullet_{P,Q})$, though morally these are identical. Computationally, we need to change representations to proceed. We then apply the bar-to-bar extension method (Algorithm \ref{alg:bar_to_bar_extension}) to find bar extensions of $\tau$ in $\bc_1(X^\bullet_Q)$ and $\tau'$ in $\bc_1(X^\bullet_P)$. The left column of Figure \ref{fig:similarity_centric_example} illustrates the selected bar $\tau \in \bc_1(W^\bullet_{Q,P})$ and the baseline bar extensions in $\bc_1(X^\bullet_Q)$ and $\bc_1(X^\bullet_P)$ respectively. The highlighted bars of $\bc_1(X^\bullet_Q)$ and $\bc_1(X^\bullet_P)$ are analogous through $\tau$. The cycle representatives show that these analogous bars, indeed, trace qualitatively similar cycles. Comparing Figures \ref{fig:feature_centric_example} and \ref{fig:similarity_centric_example}, we see that at the level of barcodes on simple examples, the feature- and similarity-centric methods produce comparable and reasonable results.

\begin{figure}[t]
\centering
\includegraphics[width=0.9\textwidth]{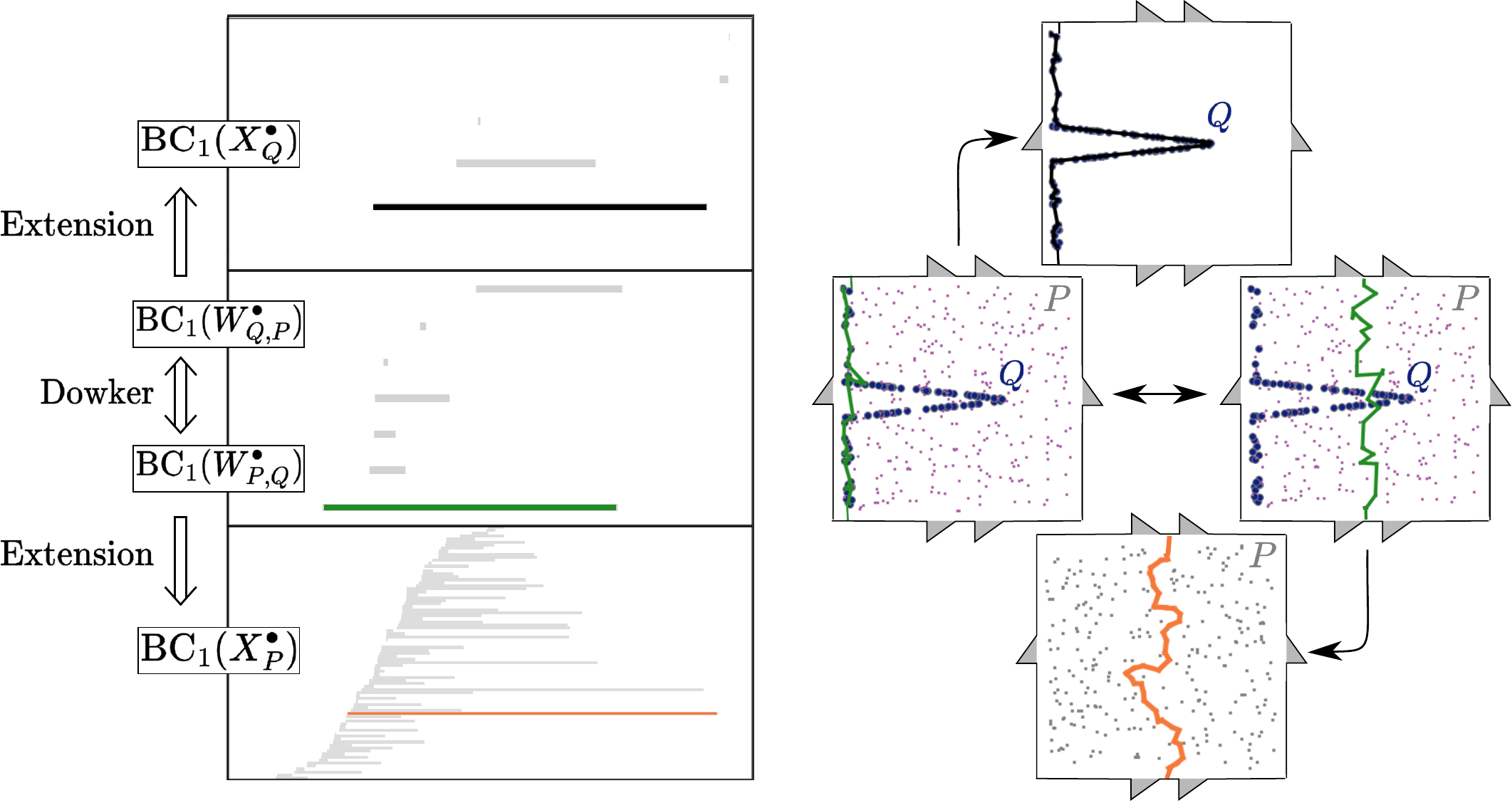}
\caption{\textbf{Illustration of the similarity-centric analogous bars method.} Let $P$ be a point cloud sampled from a torus from in Examples \ref{ex:cycle-to-cycles}, and let $Q$ be a deformation of an essential circle that witnesses both essential circles of the torus. (left column) Beginning with the green bar in $\bc_1(W^\bullet_{P,Q}) = \bc_1(W^\bullet_{Q,P}),$ we apply persistent extension twice to obtain baseline black bar extension in $\bc_1(X^\bullet_{Q})$ and orange bar extension in $\bc_1(X^\bullet_P)$. The highlighted bars in $\bc_1(X^\bullet_Q)$ and $\bc_1(X^\bullet_P)$ are analogous bars. (right column) Cycle representatives illustrating the classes obtained in this example. (middle) Cycle in $W_{Q,P}^\bullet$ representing the selected bar in the left column and a corresponding representative in $W_{P,Q}^\bullet$ obtained via Dowker's theorem. (top, bottom) Cycles in $X^\bullet_Q$ and $X^\bullet_P$ respectively representing the corresponding bars in $\bc_1(X^\bullet_Q)$ and $\bc_1(X^\bullet_P)$.}
\label{fig:similarity_centric_example}
\end{figure}
\end{example}

\section{Applications}
\label{applications} 

Finally, we demonstrate the use of the methods we have developed on some simple data sets: we apply the analogous bars method to identify corresponding features from two samples of the same space and to determine whether topological features are retained under clustering; then, we use persistent extension on its own to investigate how topological features are transformed by dimensionality reduction.

\subsection{Two samples from the same distribution}

\begin{figure}[t]
\centering
\tabskip=0pt
\includegraphics[width=0.9\textwidth]{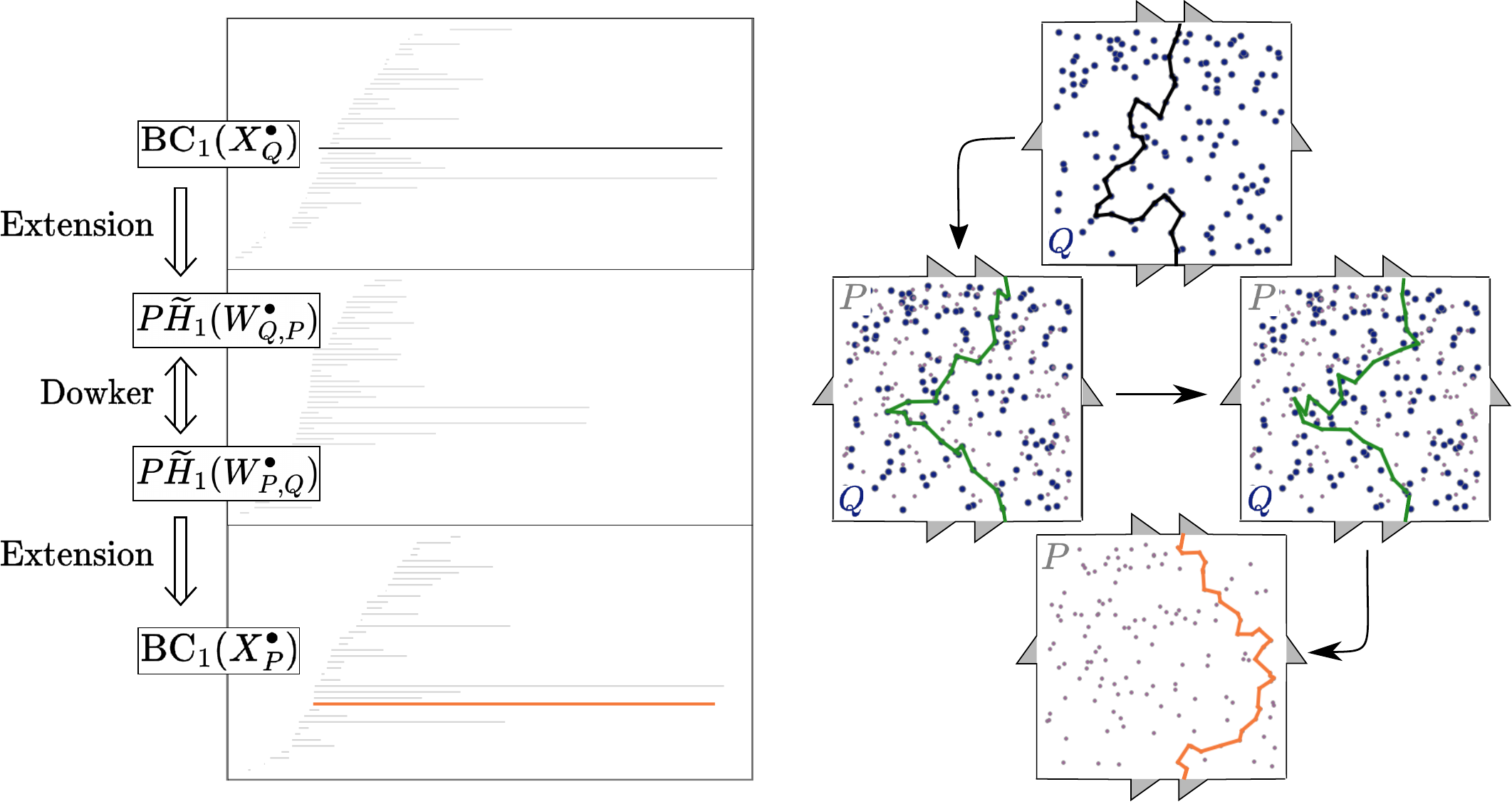}
\caption{\textbf{Using the feature-centric analogous bars method to identify features from independent samples of a torus.} 
Grey points $P$ and blue points $Q$ are independently sampled from the uniform distribution on the torus. (left column) Dimension-one barcodes for $X^\bullet_Q$, $W^\bullet_{Q,P}$ and $X^\bullet_P.$ The black bar in $\tau \in \bc_1(X^\bullet_Q)$ is the input to the method, and the orange bar in $\bc_1(X^\bullet_P)$  is the baseline bar extension. The highlighted bars in $\bc_1(X^\bullet_Q)$ and $\bc_1(X^\bullet_P)$ are analogous bars. No bars in $\bc_1(W^\bullet_{P,Q})$ are highlighted as Algorithm \ref{alg:feature_centric_analogous_bars} bypasses translation through this barcode. (right column) Cycle representatives at each stage of the feature-centric analogous bars method. (top) Cycle representative   of $\tau$ supported on $Q$. (middle-left) Representatives of a cycle $[x_Q] \in E(\tau, W^\bullet_{Q,P})$ produced by step (1) of Algorithm \ref{alg:feature_centric_analogous_bars} and (middle-right)
its Dowker dual $[x_P] \in E(\tau, W^\bullet_{P,Q}).$ (bottom) Representative of $\tau' \in S(\tau, X^\bullet_P)$ produced by step (3) of Algorithm \ref{alg:feature_centric_analogous_bars} applied to the cycle in (middle-right). }
\label{fig:torus_two_samples}
\end{figure}

One of the most common tasks in topological data analysis is the recovery of topological features from a finite sample of a probability distribution on a metric space. If we have multiple samples, for example, from different measurements of a system, understanding how the features in those samples are related would provide utility in understanding what these features mean in terms of the system being studied. This is one of the simplest settings in which we can apply the analogous bars method, using the block sub-matrix of the distance matrix that measures distance between the two samples as a measure of cross-dissimilarity.

In Figure \ref{fig:torus_two_samples}, we illustrate this approach using two point clouds, $P$ and $Q$, sampled uniformly and independently from a square torus. We consider a choice of feature $\tau \in \bc_1(X^\bullet_Q)$, and locate the corresponding feature in  $\bc_1(X^\bullet_P)$ using the feature-centric analogous bars method with $\tau$ as input. The highlighted bars of $\bc_1(X^\bullet_Q)$ and $\bc_1(X^\bullet_P)$ are analogous bars that represent the same essential circle.


\subsection{Clustering}
\label{sec:clustering}

The analogous bars method can similarly be applied to determine if topological features of a point cloud $Q$ are preserved by clustering. In Figure \ref{fig:clustering_results}, we demonstrate such a comparison using the distance matrix between points $Q$ and cluster centroids $P$ as our cross-dissimilarity matrix. 
Because the witness barcode $\bc_1(W^\bullet_{P,Q})$ has a long bar, we expect that the two point clouds share a feature. Applying the similarity-centric analogous bars method, we find the analogous bars in $\bc_1(X^\bullet_Q)$ and $\bc_1(X^\bullet_P)$ and conclude that the major topological feature of $Q$ is, in fact, retained by $P$.

\begin{figure}[ht!]
\centering
\includegraphics[width=0.9\textwidth]{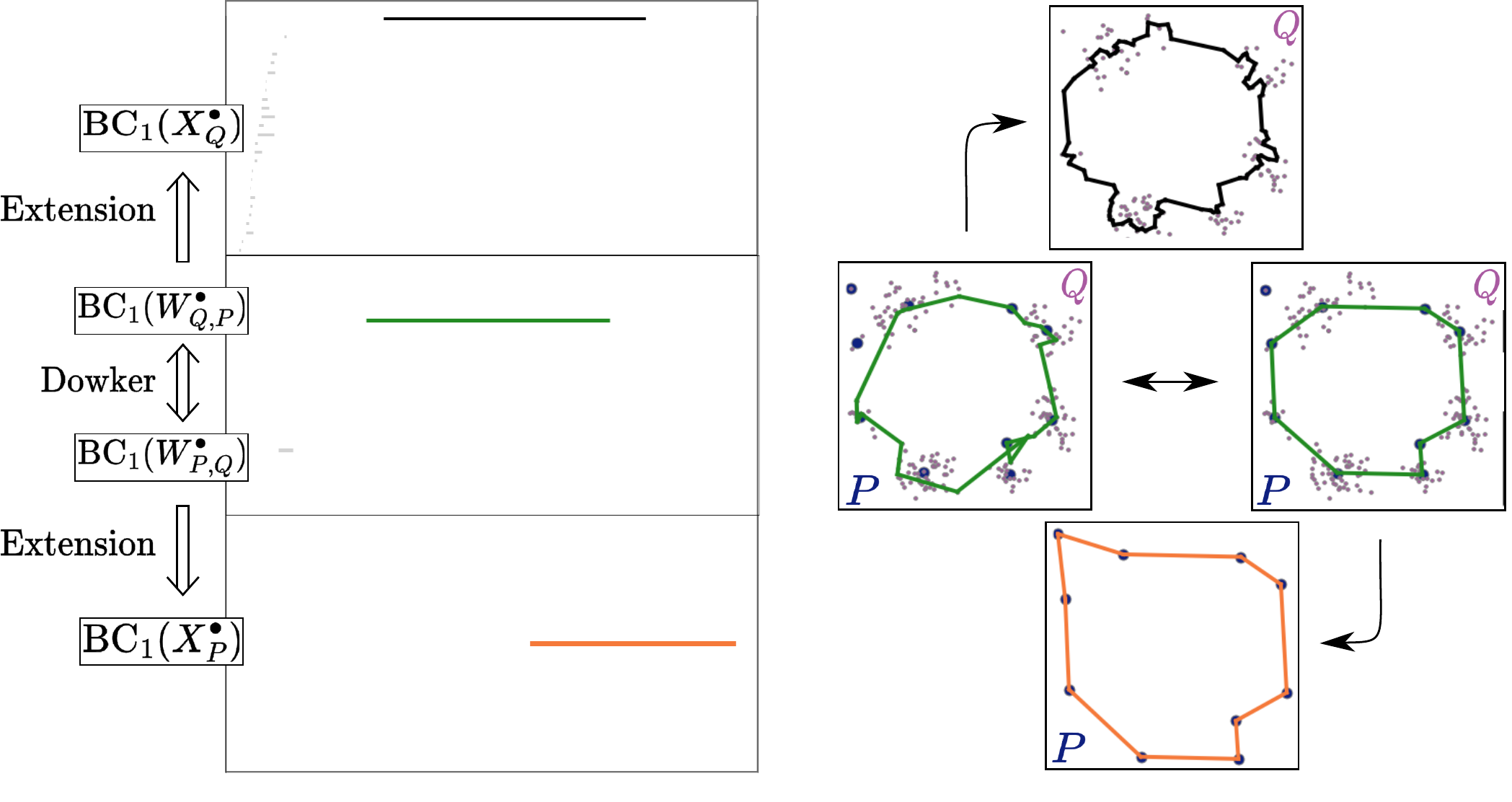}
\caption{\textbf{Application of similarity-centric analogous bars to detect topological features preserved by clustering.} 
Grey points $Q$ sampled from a multimodal distribution in the plane, and blue centroids $P$ induced by clustering. (left column) Barcodes for $X^\bullet_Q$, $W^\bullet_{Q,P}$ and $X^\bullet_P.$ The green bar $\tau \in \bc_1(W^\bullet_{Q,P})$ is the input to the method, and the black bar in $\bc_1(X^\bullet_Q)$ and orange bar in $\bc_1(X^\bullet_P)$ are the baseline bar extensions. These highlighted bars in $\bc_1(X^\bullet_Q)$ and $\bc_1(X^\bullet_P)$ are analogous bars. (right column) Cycle representatives for the input, its Dowker dual, and the outputs. (middle-left) Representative of the cycle in $W^\bullet_{Q,P}$ corresponding to the selected bar and (middle-right) its Dowker dual in $W^\bullet_{P,Q}$ . (top) Cycle representatives in the original data $Q$ and (bottom) in the centroids of the clusters $P$ confirm that this topological feature of $Q$ is preserved by clustering. }
\label{fig:clustering_results}
\end{figure}

\subsection{Dimensionality reduction}
\label{sec:dim_red}
Most dimensionality reduction techniques retain the identities of the projected points. Thus, we can apply the persistent extension method to the Vietoris-Rips complexes of the original point cloud and the projected point cloud to study whether, and how, the resulting lower-dimensional point cloud retains topological features of interest. Here, we provide two toy examples using the extension method in the context of dimensionality reduction via principal component analysis. 

\subsubsection{Trefoil knot}
First, we consider a collection of points $Q \subseteq \mathbb{R}^3$ sampled from the trefoil knot depicted in Figure \ref{fig:trefoil_knot_example} and take $\pi(Q)\subseteq\mathbb{R}^2$ to be its 2-dimensional projection via PCA. We apply the persistent extension method to these two point clouds to identify how the dominant 1-dimensional homological feature of the trefoil knot is represented in its two dimensional projection. 

The 1-dimensional barcodes $\bc_1(X^\bullet_Q)$ and $\bc_1(X^\bullet_{\pi(Q)})$ are depicted in the middle panes of Figure \ref{fig:trefoil_knot_example}. We apply the persistent extension method to the long bar in $\bc_1(X^\bullet_Q)$. The vertical line in the bottom pane indicates the $\ell_0$ parameter, and the highlighted bars in $\bc_1(X^\bullet_{\pi(Q)})$ correspond to one of the bar extensions; in fact, the top bar of $\bc_1(X^\bullet_{\pi(Q)})$ corresponds to the baseline bar extension, and the three bars whose interiors intersect the $\ell_0$-line independently make up the offset bar extensions and so can be included in any combination. The pictured extension indicates that the $S^1$ described by the 3-dimensional trefoil knot can be represented by a combination of the four features detected in $\bc_1(X^\bullet_{\pi(Q)})$. Examining the cycle representatives of the highlighted bars of $\bc_1(X^\bullet_{\pi(Q)})$ depicted on the right of Figure \ref{fig:trefoil_knot_example} illustrates why this is a reasonable answer: the various combinations enclose regions ``internal" to the knot projection.

\setcounter{topnumber}{2}
\begin{figure}[t]
\centering
\includegraphics[width=0.9\textwidth]{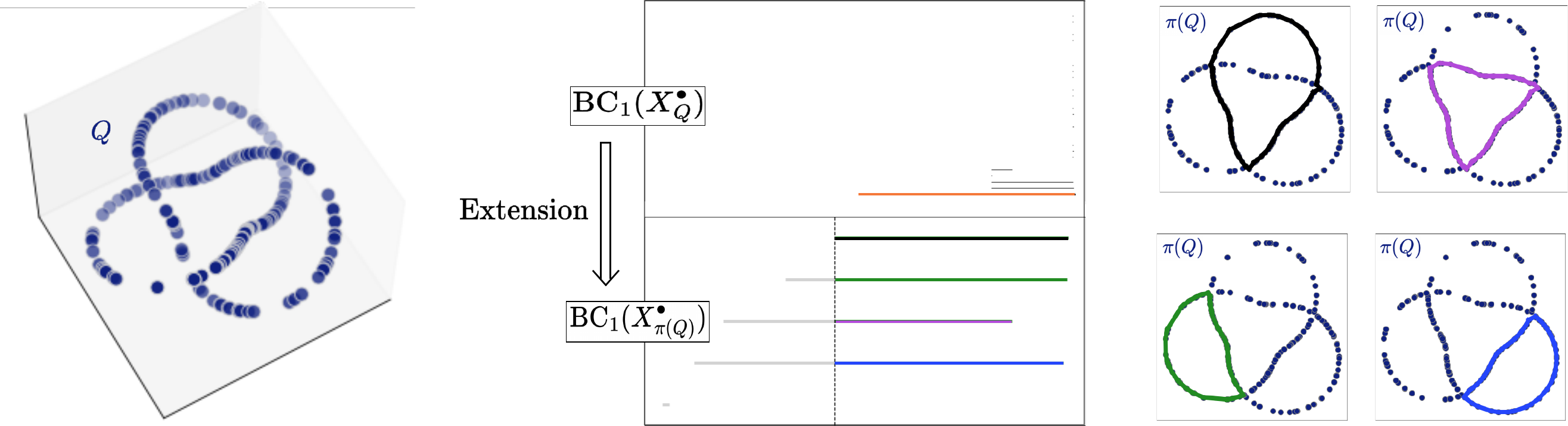}
\caption{\textbf{Application of persistent extension method to planar projection of the trefoil knot via PCA.} (left) Blue points  $Q$ sampled from a standard trefoil knot in three dimensions. (middle) One-dimensional barcodes for the sample $Q$ and its projection $\pi(Q)$ to the first two loading vectors via PCA. Selecting the long orange bar $\tau \in \bc_1(X^\bullet_Q)$ and applying bar-to-bar persistent extension (Algorithm \ref{alg:bar_to_bar_extension_F2}), we find that the top black bar in $\bc_1(X^\bullet_{\pi(Q)})$ is our baseline bar extension and the remainder are individually offsets. (right) Cycle representatives illustrating the four bars appearing in the set of possible extensions.}
\label{fig:trefoil_knot_example}
\end{figure}


\subsubsection{Cylinder}
\begin{figure}[b]
\centering
\tabskip=0pt
\includegraphics[width=0.9\textwidth]{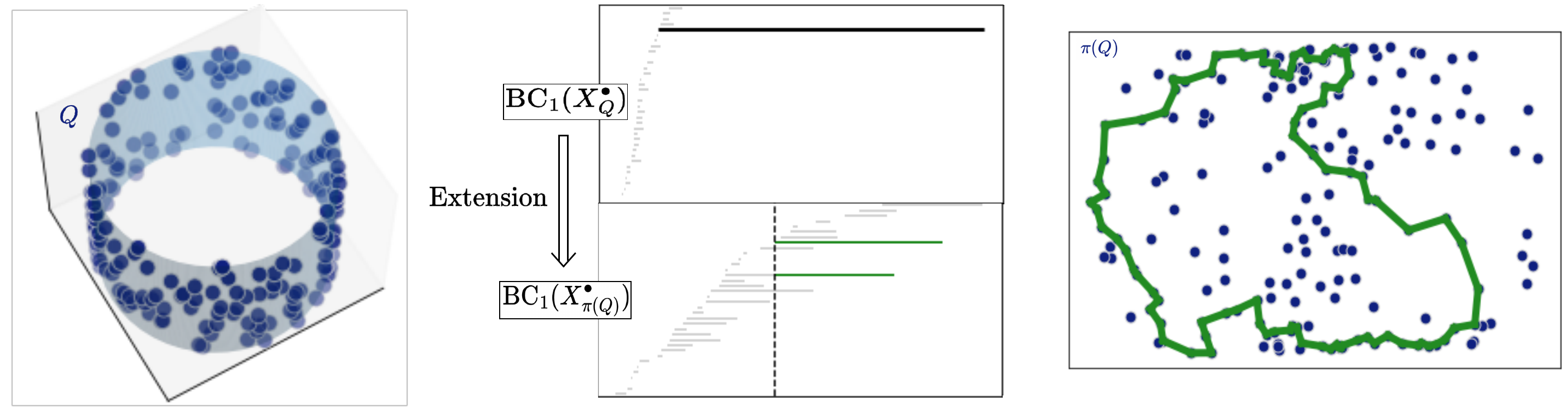}
\caption{\textbf{Application of persistent extension method to planar projection of a sampled cylinder via PCA.} (left) Blue points $Q$ sampled from a cylinder in three dimensions. (middle) One-dimensional barcodes for the sample $Q$ and its projection $\pi(Q)$ to the first two loading vectors via PCA. Selecting the long bar $\tau \in \bc_1(X^\bullet_Q)$ and applying bar-to-bar persistent extension (Algorithm \ref{alg:bar_to_bar_extension_F2}), we find that the green bars in $\bc_1(X^\bullet_{\pi(Q)})$ form our baseline bar extension. (right) Cycle representative for the baseline cycle extension, illustrating how the cycle in the cylinder is flattened. }
\label{fig:PCA_example}
\end{figure}

Sometimes, applying dimensionality reduction to a point cloud flattens the object too much to easily visualize what happens to the topological features. Let $Q$ be a set of points sampled from a cylinder in $\mathbb{R}^3$, as shown in the left panel of Figure \ref{fig:PCA_example}, and let $\pi(Q)$ be the result of projecting the points onto their first two loading vectors. As before, we consider $X^\bullet_Q$ and $X^\bullet_{\pi(Q)},$ with 1-dimensional barcodes as shown in the middle pane of Figure \ref{fig:PCA_example}. Selecting the highlighted long bar in $\bc_1(X^\bullet_Q)$ and applying the persistent extension method produces the highlighted bars in $\bc_1(X^\bullet_{\pi(Q)})$ as our baseline bar extensions. The right panel in Figure \ref{fig:PCA_example} shows the cycle representative of the baseline cycle extension corresponding to the baseline bar extension in $\bc_1(X^\bullet_{\pi(Q)})$. Other choices of extension and representatives will recover how paths ``around" the cylinder project onto this plane.

\section{Conclusion}
\label{conclusion} 
In this paper, we addressed the problem of comparing  topological features computed from a pair of finite systems equipped with internal and cross-dissimilarity measures. We focused on the case where a triple of matrices provides a summary of the observations about such systems, a setting that commonly arises in applications. To leverage this information we developed two methods: 
\begin{enumerate}
    \item the \emph{persistent extension} method, a zig-zag computation which provides a means for testing potential relationships between two complexes built on the same vertex set; and, 
    \item the \emph{analogous bars} method, which leverages the Dowker complex of a cross-dissimilarity matrix to bridge between complexes supported on two distinct vertex sets.
\end{enumerate} 
Our examples in this paper focused on point cloud data, a common setting for topological data analysis. However, there are a variety of settings where these tools are likely to be of use. In particular, there is no explicit requirement that we use clique complexes; we focused on this example because it is the most commonly studied and easily implemented with existing software packages.

For some time there has been discussion of using zig-zag based analyses for time-varying simplicial complexes. The persistent extension method provides a concrete approach for studying how features evolve within a single system. It can also be applied to study how different filtrations affect the topology of a complex. Indeed, both this proposd time series analysis and the dimensionality reduction examples in Section \ref{sec:dim_red} can be cast in this light. We can also envision using this approach to investigate two-dimensional persistence modules, studying how features in given one-dimensional submodules manifest in others. 

The analogous bars method provides a tool for studying simultaneously observed systems and determining whether their structure and dynamics capture common features. It is worth noting that while many potential applications will involve an underlying function, the Dowker complex was originally conceived as a topological method for studying relations \cite{Dowker}, and so our method does not require such an assumption to provide sensible answers. For example, it could be used to study a pair of systems driven by a common topologically interesting input signal and detect signatures of that signal which are preserved in each. In forthcoming work, the authors apply these methods for precisely this kind of analysis.

\medskip

While we have laid out the theoretical and computational foundation for using these tools, we have not attempted to address a large number of immediate questions that arose while we were writing this paper. We close with a selection of these that we find compelling.
\begin{enumerate}
    \item Can we systematically choose the parameter $\psi$ used in Algorithm \ref{alg:cycle_to_cycle_extension} using features of the data? That is, given a homology class $[\tau]$ and its death time $\delta([\tau])$, what is the optimal parameter $\psi$?    
    Empirically, we have found that when the homology class $[\tau]$ is represented by multiple bars, for example, as $S^{\mathcal{B}}_{[\tau]} = \{ \tau_1, \dots, \tau_n\}$, then the cycle extension that one intuitively expects may arise only when we consider a parameter $\psi$ that is smaller than $\delta([\tau])$. This appears to be the result of homology classes becoming too large as the parameter increases, so that the intuitive cycle extension becomes homologous to unexpected cycles. 
    \item How do we characterize the quality of the output of these methods? For example, suppose we consider Vietoris-Rips complexes for samples from some distribution on underlying metric spaces $X$ and $Y$, where our cross-dissimilarity measure is given by $d(f(x),y) + \eta$ for a noise distribution $\eta$ and some underlying map $f :X  \to Y.$ Can we analytically or computationally give a measure of confidence that a pair of analogous bars is correct?
    \item If we consider a sequence of Vietoris-Rips complexes built from increasingly large samples from a distribution on a metric space, do the sequence of analogous bars obtained from these methods stabilize in some manner, as we might hope they would? 
    \item Under what conditions do the feature-centric and similarity-centric analogous bars methods produce comparable results? As the feature-centric method is more intuitive and the similarity-centric method can be much more computationally efficient, understanding how they compare would be useful.
    \item How stable are the analogous bars under deformation of the original complexes and the cross-dissimilarity matrix? We expect the standard stability theorems of persistent homology \cite{stability_PD} and of Dowker persistence diagrams \cite{functorial_dowker} will provide a first approximation to this answer, but can we say more?
    \item There is a form of inverse problem suggested by this work: can we use information about topological features to register objects, such as samples from the same distribution, when no cross-dissimilarity measure is available? A first step could be understanding how information about analogous bars would constrain the cross-dissimilarity measure, allowing Dowker duality to provide the desired relations.
    \item Recent work by Gregory Henselman-Petrusek and the second author \cite{ghrist2021saecular} develops a framework for  persistence over more general (non-field) coefficient systems. Can we leverage this to generalize the algorithms developed in the current paper to, for example, the setting of lattices? 
\end{enumerate}


\appendix
\renewcommand{\thesection}{
\Alph{section}}


\section{Proof of Lemma \ref{lemma:basis_change}}
\label{pf:lemma:basis_change}
\basischange*

\begin{proof}
We will show that if $\tau_r$ and $\tau_c$ do not satisfy Equation \ref{interval_relations}, then $L^\ell_{r,c} = 0$. We consider the two cases $\beta(\tau_r) > \beta(\tau_c)$ and $\delta(\tau_r) > \delta(\tau_c)$ separately. Given a fixed ordering of bars in $\bc_k(Z^\ell)$, let $\{ \vec{e}^{\; \ell}_{\tau_i} \;|\; i=1, \dots, |\bc_k(Z^\ell)|\}$ be the corresponding basis of $I^\ell = (\mathbb{I}_{\bc_k(\filtOne)})^\ell$. 

First, assume that $\beta(\tau_c) < \beta(\tau_r) \leq \ell$. Let $\psi = \beta(\tau_r)-1$. Consider the following commutative diagram, which is obtained from Diagram \ref{diag:barcode_iso} by composing maps from $I^{\psi}$ to $I^\ell$. Let $\iota, L^\ell$, and $L^{\psi}$ denote the matrices representing the maps. 
	\[ 
	\begin{tikzcd}
	I^{\psi} \arrow[d, "L^{\psi}"] \arrow[r, "\iota"] & I^\ell \arrow[d,"L^\ell"] \\
	I^{\psi} \arrow[r, "\iota"] & I^\ell 
	\end{tikzcd}
	\]
Recall that $\vec{e}^{\; \ell}_r$ and $\vec{e}^{\; \ell}_c$ are the basis vectors of $I^\ell$ corresponding to $\tau_r$ and $\tau_c$. Let $\vec{e}^{\; \psi}_{c_*} \in I^{\psi}$ be the basis vector of $I^{\psi}$ corresponding to $\tau_c$. Let's consider $(L^\ell \cdot \iota)_{r, c_*} = (\iota \cdot L^{\psi})_{r, c_*}$. On one hand, since $\iota$ maps $\vec{e}^{\; \psi}_{c_*}$ to $\vec{e}^{\; \ell}_c$, the $c_*^{\text{th}}$ column of matrix $\iota$ has $1$ at component $c$ and $0$ elsewhere. Then, $(L^\ell \cdot \iota)_{r,c_*} = (L^\ell)_{r,c}$. On the other hand, since the bar $\tau_r$ does not exist at parameter $\psi$, the $r^{\text{th}}$ row of matrix $\iota$ is the zero vector, and $(\iota \cdot L^{\psi})_{r, c_*} = 0$. Thus, $L^\ell_{r,c} = (L^\ell \cdot \iota)_{r,c_*} = (\iota \cdot L^{\psi})_{r, c_*} = 0$.

We now assume that $\ell < \delta(\tau_c) < \delta(\tau_r)$. Consider the following commutative diagram obtained from Diagram \ref{diag:barcode_iso}. Again, let $\iota, L^\ell$, and $L^{\delta(\tau_c)}$ denote matrices.
\[ 
\begin{tikzcd}
I^{\ell} \arrow[d, "L^{\ell}"] \arrow[r, "\iota"] & I^{\delta(\tau_c)} \arrow[d,"L^{\delta(\tau_c)}"] \\
I^{\ell} \arrow[r, "\iota"] & I^{\delta(\tau_c)} 
\end{tikzcd}
\]
Let $\vec{e}^{\; \delta(\tau_c)}_{r_*}$ be the basis vector of $I^{\delta(\tau_c)}$ corresponding to $\tau_r$. Proceeding similarly as above, one can show that $L^\ell_{r,c} = (\iota \cdot L^\ell)_{r_*, c} = (L^{\delta(\tau_c)} \cdot \iota)_{r_*,c} = 0$.

\end{proof}

\section{Proof of Lemma \ref{lemma:diagonal}}
\label{pf:lemma:diagonal}
\diagonal*
\begin{proof}
Fix $\ell \in \mathbb{Z}$ and $1 \leq j \leq |\bc_k(Z^\ell)|$. Order the bars of $\bc_k(Z^\ell)$ that are alive at $\ell$ by decreasing birth parameters. If multiple bars share the same birth parameter, then order them by decreasing death parameters. Let $\tau_1, \dots, \tau_{|\bc_k(Z^\ell)|}$ be the resulting order. Without loss of generality, assume that each bar corresponds to the the basis vectors $\vec{e}^{\; \ell}_1, \dots, \vec{e}^{\; \ell}_{|\bc_k(Z^\ell)|}$ of $I^\ell = (\mathbb{I}_{\bc_k(Z^\bullet)})^\ell$. For every $r < c$, either $\beta(\tau_r) > \beta(\tau_c)$ or $\delta(\tau_r) > \delta(\tau_c)$. Thus, by Lemma \ref{lemma:basis_change},  $L^{\ell}_{r, c} =0$ for every $r < c$.

Proceed by induction. From Lemma \ref{lemma:basis_change}, we know that $L^{\ell}_{1, c} = 0$ for all $c > 1$. Since $L^{\ell}$ is a linear isomorphism, it cannot have a row of zeros. Thus, $L^{\ell}_{1,1} \neq 0$.

For the inductive step, assume that $L^{\ell}_{j-1, j-1} \neq 0$. We will show that $L^{\ell}_{j,j} \neq 0$ by assuming the contrary. Assume that $L^{\ell}_{j,j} = 0$. Let $\vec{r}_1, \dots \vec{r}_j$ each denote the first $j$ row vectors of $L^{\ell}$. Let $\vec{r}^{\; *}_i$ denote the vector in $\field^{j-1}$ that consists of the first $j-1$ components of $\vec{r}_i$ for $i=1, \dots, j$. Since $\vec{r}^{\; *}_1, \dots, \vec{r}^{\;*}_j$ are $j$ vectors in $\field^{j-1}$, the collection is linearly dependent. By construction, each $\vec{r}_i$ can be obtained from $\vec{r}^{\; *}_i$ by appending zeros. Thus, the collection $\vec{r}_1, \dots, \vec{r}_j$ is also linearly dependent. This contradicts the fact that $L^{\ell}$ is a linear isomorphism. Thus, it must be the case that $L^\ell_{j,j} \neq 0$.
\end{proof}

\section{Proof of Lemma \ref{lemma:linear_independence}}

\linindep*
\label{pf:lemma:linear_independence}

\begin{proof}
Without loss of generality, assume that the bars of $\bc_k(\SCOne^{\ell})$ are ordered by decreasing birth parameter, and if multiple bars share the same birth parameters, assume that the bars are ordered by decreasing death parameters. So if $r<c$, then $\beta(\tau_r) > \beta(\tau_c)$ or $\delta(\tau_r) > \delta(\tau_c)$. Fix $1 \leq t \leq |\bc_k(Z^\ell)|$. Let $L$ be any matrix satisfying Equation \ref{eq:matrixL}. By construction, $L_{r,c} = 0$ for all $r < c$. Let $\{ L_{*,1}, \dots, L_{*,t} \}$ denote the column vectors of $L$. Assume that 
\begin{equation}
\label{lin_ind_eq}
c_1 L_{*,1} + \dots + c_t L_{*,t} = \vec{0}
\end{equation}
Considering the first component of Equation \ref{lin_ind_eq}, we know that 
\[c_1 L_{1,1} + \dots + c_t L_{1,t} = 0\]
By construction, we know that $L_{1,1} \neq 0$, and $L_{1,2} = \dots = L_{1,t} = 0$. So $c_1 = 0$.

Proceed by induction. Assume $c_1 = \dots = c_{j-1} = 0$. The $j^{\text{th}}$ component of Equation \ref{lin_ind_eq} is
\[c_j L_{j,j} + \dots + c_t L_{j, t}= 0 \]
Again, by construction, we know that $L_{j,j} \neq 0$, and $L_{j, j+1} = \dots = L_{j, t} = 0$. So $c_j = 0$. By induction, $c_1 = \dots = c_t = 0$, and the column vectors of $L$ are linearly independent. 
\end{proof}

\section{Proof of Theorem \ref{main_thm} }

\label{proof_of_extension_algorithm}
To prove Theorem \ref{main_thm}, we need to show the following facts:
\begin{itemize}
    \item the collections $p_Y$ and $\restrictionalgo^{\IntDecDelta}_{\ell}$ are independent of the choice of interval decomposition $\IntDecDelta,$
    \item the collection $\restrictionalgo^{\IntDecDelta}_{\ell}$ coincides with $\restrictiondef_{\ell}$, the set of restrictions as defined in Definition \ref{restriction_def}, and
    \item the output of Algorithm \ref{alg:cycle_to_cycle_extension} suffices to find the set of cycle extensions  $E$ of $[\tau]$ as given in Definition \ref{restriction_def}.
\end{itemize} 
We prove each of these facts in turn.

\subsection{The collection of parameters $p_Y$ constructed in Algorithm \ref{alg:cycle_to_cycle_extension} is well-defined.}


\begin{restatable}{lemma}{pyindep}
\label{lemma:p_Y independence}
The collection $p_Y$ is independent of the choice of interval decomposition $\IntDecDelta.$
\end{restatable}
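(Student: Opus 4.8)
The plan is to trace through the construction of $p_Y$ in step (2) of Algorithm \ref{alg:cycle_to_cycle_extension} and show at each stage that the relevant objects either do not depend on $\IntDecDelta$ at all, or depend on it only through data that is itself $\IntDecDelta$-invariant. Recall that $p_Y = \{\ell_0\} \cup \{\intFiltBirth(\mu) \mid \mu \in \omega\}$, where $\omega = \{\mu \in \bc_k(\intFilt) \mid \ell_0 < \intFiltBirth(\mu) < \intFiltDeath(\mu) < \infty\}$ and $\ell_0 = \max_{\intXdelta \in \Rho^{\IntDecDelta}_{\tau}} \intFiltBirth(\intXdelta)$. The set $\omega$ is defined purely in terms of the barcode $(\bc_k(\intFilt), \intFiltBirth, \intFiltDeath)$, which by Theorem \ref{thm:structure} is unique up to isomorphism and in particular independent of the chosen interval decomposition; so the entire question reduces to showing that $\ell_0$ is independent of $\IntDecDelta$.

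First I would recall that $\ell_0 = \max\{\intFiltBirth(\intXdelta) \mid \intXdelta \in \Rho^{\IntDecDelta}_\tau\}$, where $\Rho^{\IntDecDelta}_\tau$ is the set of bars appearing in the $\IntDecDelta$-bar representation $S^{\IntDecDelta}_{[\tau]}$ of $[\tau]$ at $N$. By Lemma \ref{lemma:class_birth_death} (applied with $[w] = [\tau]$ at $\ell = N$), the birth parameter $\beta([\tau])$ of the class $[\tau]$ in $P\HH_k(\intFilt)$ equals $\max\{\intFiltBirth(\intXdelta) \mid \intXdelta \in \bc_k((\SCOne^\psi \cap Y^N)) \text{ with nonzero coefficient in } [\tau]\}$, and this maximum is precisely $\ell_0$ — moreover Lemma \ref{lemma:class_birth_death} explicitly asserts that this value is independent of the interval decomposition used to express $[\tau]$. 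Thus $\ell_0 = \beta([\tau])$, the birth parameter of the class $[\tau] \in \HH_k(\SCOne^\psi) = \HH_k(\SCOne^\psi \cap Y^N)$ viewed inside $P\HH_k(\intFilt)$, which is manifestly a property of the class and the persistence module, not of any chosen basis.

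With $\ell_0$ shown to be $\IntDecDelta$-independent, the conclusion follows immediately: $\omega$ depends only on $\ell_0$ and the isomorphism class of the barcode of $\intFilt$, and $\{\intFiltBirth(\mu) \mid \mu \in \omega\}$ is then a well-defined subset of $\mathbb{Z}$, so $p_Y = \{\ell_0\} \cup \{\intFiltBirth(\mu) \mid \mu \in \omega\}$ is independent of $\IntDecDelta$. The main obstacle here is not any deep computation but rather bookkeeping: one must be careful that the "$[\tau]$" appearing in step (2-a) is the same class regardless of $\IntDecDelta$ (it is — it is the fixed input class, re-expressed in whatever basis $\IntDecDelta$ provides), and that Lemma \ref{lemma:class_birth_death}'s hypothesis of unique birth-death pairs is in force, which it is by the standing assumptions of the paper. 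I would also note in passing that the argument uses only the birth half of Lemma \ref{lemma:class_birth_death}; the death data is irrelevant to $\ell_0$.
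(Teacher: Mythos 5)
Your proof is correct, but it takes a genuinely different route from the paper's. The paper argues the $\IntDecDelta$-independence of $\ell_0$ by hand: it takes two interval decompositions $\IntDecDeltaOne$ and $\IntDecDeltaTwo$, forms the change-of-basis matrix $L^{N}=(G^{N})^{-1}F^{N}$ at the final parameter of the auxiliary filtration, and uses the support constraints on that matrix from Lemmas \ref{lemma:basis_change} and \ref{lemma:diagonal} to show directly that the bar realizing $\max_{\rho\in\Rho^{\IntDecDeltaOne}_{\tau}}\intFiltBirth(\rho)$ also appears in $\Rho^{\IntDecDeltaTwo}_{\tau}$ and that no bar of $\Rho^{\IntDecDeltaTwo}_{\tau}$ is born later, whence the two maxima agree. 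You instead observe that $\ell_0$ is, by Lemma \ref{lemma:class_birth_death} applied to $[\tau]$ at parameter $N$ in the auxiliary module $P\HH_k(\SCOne^{\psi}\cap\SCTwo^\bullet)$, exactly the intrinsic birth parameter of Definition \ref{def:cycle_birth_death}, which is defined via the structure maps alone and so cannot see $\IntDecDelta$; the remainder of $p_Y$ depends only on $\ell_0$ and on the barcode, which is canonically identified across decompositions under the standing unique-birth-death assumption. Both arguments are sound and your bookkeeping (that the input class $[\tau]$ is fixed independently of $\IntDecDelta$, and that only the birth half of Lemma \ref{lemma:class_birth_death} is needed) is the right thing to check. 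What each approach buys: yours is shorter and more conceptual, reducing the lemma to the statement that ``birth of a class'' is basis-free; its only cost is that it leans on Lemma \ref{lemma:class_birth_death}, whose proof the paper omits and whose content --- that the max-of-births formula computes the intrinsic birth parameter independently of the decomposition --- is essentially the same change-of-basis fact the appendix proof establishes explicitly. The paper's version is self-contained modulo Lemmas \ref{lemma:basis_change} and \ref{lemma:diagonal}, at the price of a longer coordinate computation.
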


\begin{proof}
We first show that $\ell_0$ is well-defined. Let $\IntDecDeltaOne, \IntDecDeltaTwo: \mathbb{I}_{\bc_k(\intFilt)} \to P\HH_k(\intFilt)$ be interval decompositions consisting of the following diagram. Recall that $\HH_k(\SCOne^{\psi} \cap Y^N) = \HH_k(\SCOne^{\psi})$.
\begin{equation}
\label{PM_morphisms}
\begin{tikzcd}
I^{1} \arrow[r, rightarrow]   \arrow[d, shift right, swap, "\IntDecDeltaOne^{1}"]  \arrow[d, shift left, "\IntDecDeltaTwo^{1}"]  &  \cdots \arrow[r, rightarrow] & I^{\ell} \arrow[r, rightarrow] \arrow[d, shift right, swap, "\IntDecDeltaOne^{\ell}"]  \arrow[d, shift left, "\IntDecDeltaTwo^{\ell}"] &\cdots \arrow[r, rightarrow]  & I^{N} \arrow[d, shift right, swap, "\IntDecDeltaOne^{N}"]  \arrow[d, shift left, "\IntDecDeltaTwo^{N}"] \\
\HH_k(\SCOne^{\psi} \cap \SCTwo^{1})  \arrow[r, rightarrow]& \cdots \arrow[r, rightarrow]& \HH_k(\SCOne^{\psi} \cap \SCTwo^{\ell})  \arrow[r, rightarrow] & \cdots \arrow[r, rightarrow] & \HH_k(\SCOne^{\psi}) 
\end{tikzcd}\end{equation}

Let $M$ be the dimension of $\HH_k(Z^\psi)$, and let $\rho_1, \dots, \rho_M$ denote some ordering of the bars of $\bc_k(\intFilt)$ at parameter $N$. Let $\{\vec{e}^{\; N}_1, \dots, \vec{e}^{\; N}_M \}$ denote the corresponding basis vectors of $I^{N}$. Then, the collections $\IntDecDeltaOneLetter = \{ \IntDecDeltaOne^{N}(\vec{e}^{\; N}_1), \dots, \IntDecDeltaOne^{N}(\vec{e}^{\; N}_M) \}$ and $\IntDecDeltaTwoLetter = \{ \IntDecDeltaTwo^{N}(\vec{e}^{\; N}_1), \dots, \IntDecDeltaTwo^{N}(\vec{e}^{\; N}_M) \}$ each form a basis of $\HH_k(\SCOne^{\psi})$. 

Without loss of generality, let $S^{\IntDecDeltaOne, N}_{[\tau]} = \{ \IntDecDeltaOneLower_1 \intXdelta_1, \dots, \IntDecDeltaOneLower_m \intXdelta_m\}$ and $S^{\IntDecDeltaTwo, N}_{[\tau]} = \{ \IntDecDeltaTwoLower_{i_1} \intXdelta_{i_1}, \dots, \IntDecDeltaTwoLower_{i_t} \intXdelta_{i_t}\}$ be the $\IntDecDeltaOne$-bar and $\IntDecDeltaTwo$-bar representations of $[\tau]$ at $N$. That is,
\begin{align*}
[\tau] & = 
\IntDecDeltaOneLower_1[\intXdelta^{\IntDecDeltaOne, N}_1] + \dots + \IntDecDeltaOneLower_m [\intXdelta^{\IntDecDeltaOne, N}_m]  \\
[\tau] &= 
\IntDecDeltaTwoLower_{i_1} [\intXdelta^{\IntDecDeltaTwo, N}_{i_1}] + \dots + \IntDecDeltaTwoLower_{i_t} [\intXdelta^{\IntDecDeltaTwo, N}_{i_t}],
\end{align*}
and all coefficients are nonzero. It follows that 
\[ \Rho^{\IntDecDeltaOne}_{\tau} = \{ \rho_1, \dots, \rho_m \} \text{ and } \Rho^{\IntDecDeltaTwo}_{\tau} = \{\rho_{i_1}, \dots, \rho_{i_t} \}. \]
Let $[\tau]_{\IntDecDeltaOneLetter}$ and $[\tau]_{\IntDecDeltaTwoLetter}$ each denote the coordinate vectors relative to the basis $\IntDecDeltaOneLetter$ and $\IntDecDeltaTwoLetter$. Given a vector $\vec{v}$, we'll use $(\vec{v}\,)_r$ to refer to its $r^{\text{th}}$ coordinate. Then,
$([\tau]_{\IntDecDeltaOneLetter})_r$ is $\IntDecDeltaOneLower_r$ for $1 \leq r \leq m$ and $0$ otherwise. Similarly, $([\tau]_{\IntDecDeltaTwoLetter})_r$ is $\IntDecDeltaTwoLower_r$ for $r \in \{ i_1, \dots, i_t\}$ and 0 otherwise. 

Let $\mathcal{L}= (\IntDecDeltaTwo)^{-1} \circ \IntDecDeltaOne$. Let $F^{N}, G^{N}, L^{N}$ each denote the matrix representation of $\IntDecDeltaOne^{N}, \IntDecDeltaTwo^{N}$, and $\mathcal{L}^{N}$.
Consider the matrix $L^{N} = ({\IntDecDeltaTwoLetter}^{N})^{-1} \circ {\IntDecDeltaOneLetter}^{N}$. Note that $[\tau]_{\IntDecDeltaTwoLetter} = L^{N} [\tau]_{\IntDecDeltaOneLetter}$. Then, $[\tau]_{\IntDecDeltaTwoLetter}$ is a linear combination of the first $m$ column vectors of $L^{N}$. That is, for any $r$, we have
\begin{equation}
\label{coordinates}
([\tau]_{\IntDecDeltaTwoLetter})_r = \IntDecDeltaOneLower_1 (L^{N})_{r, 1} + \dots + \IntDecDeltaOneLower_m (L^{N})_{r,m}.
\end{equation}
We will now show that $\max_{\rho \in \Rho^{\IntDecDeltaOne}_{\tau}} \{\intFiltBirth(\rho)\} = \max_{\rho \in \Rho^{\IntDecDeltaTwo}_{\tau}} \{\intFiltBirth(\rho)\}$. Without loss of generality, assume that $ \intFiltBirth(\intXdelta_{1}) = \max_{\rho \in \Rho^{\IntDecDeltaOne}_{\tau}} \{\intFiltBirth(\rho)\}$. That is,
\begin{equation}
\label{birth_rho1_assumption}
\intFiltBirth(\rho_j) \leq \intFiltBirth(\rho_1) \text{ for all } 1 \leq j \leq m.
\end{equation}

We first show that $\rho_1 \in \Rho^{\IntDecDeltaTwo}_{\tau}$ by showing $([\tau]_{\IntDecDeltaTwoLetter})_{1} \neq 0$. From Equation \ref{coordinates}, we know that 
\[ ([\tau]_{\IntDecDeltaTwoLetter})_{1} = \IntDecDeltaOneLower_1 (L^{N})_{1, 1} + \dots + \IntDecDeltaOneLower_m (L^{N})_{1,m}. \]
By Equation \ref{birth_rho1_assumption}, $\intFiltBirth(\intXdelta_j) < \intFiltBirth(\intXdelta_{1})$ for all $2 \leq j \leq m $. From Lemmas \ref{lemma:basis_change} and \ref{lemma:diagonal}, we know that $(L^{N})_{1,1} \neq 0$  and $(L^{N})_{1,j} = 0$ for all $2 \leq j \leq m$. Thus, $([\tau]_{\IntDecDeltaTwoLetter})_{1} = \IntDecDeltaOneLower_1 (L^{N})_{1, 1} \neq 0 $. In particular, this shows that $\intXdelta_1 \in \Rho^{\IntDecDeltaTwo}_{\tau}$.

We now show that $\intFiltBirth(\intXdelta_i) \leq \intFiltBirth(\intXdelta_{1})$ for every $\intXdelta_i \in \Rho^{\IntDecDeltaTwo}_{\tau}$. Let $\intXdelta_i \in \Rho^{\IntDecDeltaTwo}_{\tau}$. If $\intXdelta_i \in \Rho^{\IntDecDeltaOne}_{\tau}$ as well, then $\intFiltBirth(\intXdelta_{i}) \leq \intFiltBirth(\intXdelta_{1})$ by assumption. If $\rho_i \notin \Rho^{\IntDecDeltaOne}_{\tau}$, recall from Equation \ref{coordinates} that 
		\[ ([\tau]_{\IntDecDeltaTwoLetter})_{i} = \IntDecDeltaOneLower_1 (L^{N})_{i, 1} + \dots + \IntDecDeltaOneLower_m (L^{N})_{i, m} \]
	Since $([\tau]_{\IntDecDeltaTwoLetter})_i \neq 0$, there must be some $ 1 \leq j \leq m$ such that $(L^{N})_{i,j} \neq 0$. By Lemma \ref{lemma:basis_change}, this implies that $\intFiltBirth(\intXdelta_{i}) \leq \intFiltBirth(\intXdelta_j)$, and by  Equation \ref{birth_rho1_assumption}, we have $\intFiltBirth(\intXdelta_{i}) \leq \intFiltBirth(\rho_j) \leq \intFiltBirth(\intXdelta_{1})$. 

So far, we showed that $\intFiltBirth(\intXdelta_i) \leq \intFiltBirth(\intXdelta_{1})$ for every $\intXdelta_i \in \Rho^{\IntDecDeltaTwo}_{\tau}$, and that $\intXdelta_1 \in \Rho^{\IntDecDeltaTwo}_{\tau}$. Then, $\intFiltBirth(\rho_1) =  \max_{\rho \in \Rho^{\IntDecDeltaTwo}_{\tau}} \{\intFiltBirth(\rho)\}$, which coincides with $\max_{\rho \in \Rho^{\IntDecDeltaOne}_{\tau}} \{\intFiltBirth(\rho)\}$. Thus, $\ell_0$ is independent of the choice of the interval decomposition $\IntDecDeltaOne: \mathbb{I}_{\bc_k(\intFilt)} \to P\HH_k(\intFilt)$. 

The rest of the parameters of $p_Y$ are determined from the birth parameter of various bars, which is independent of the interval decomposition. Thus, $p_Y$ is independent of the interval decomposition $\IntDecDeltaOne: \mathbb{I}_{\bc_k(\intFilt)} \to P\HH_k(\intFilt)$. 
\end{proof}

Note that by construction, $\ell_0$ is the smallest parameter at which a restriction of $[\tau]$ can be found.

\begin{restatable}{lemma}{minezero}
\label{min_e0}
Given parameter $\ell$, let $\chi_{\ell}: \HH_k(Z^{\psi} \cap Y^{\ell}) \to \HH_k(Z^\psi)$ be the map induced by inclusion. Given $[\tau] \in \HH_k(\SCOne^{\psi})$, if there exists parameter $\ell$ and $[w] \in \HH_k(\SCOne^{\psi} \cap \SCTwo^{\ell})$ such that $[\tau]= \chi_{\ell}[w]$, then $\ell \geq \ell_0$
\end{restatable}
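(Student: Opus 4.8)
The plan is to recognize $\ell_0$ as the birth parameter of $[\tau]$ inside the auxiliary persistence module $P\HH_k(\intFilt)$, and then to invoke the characterization of birth parameters from Definition \ref{def:cycle_birth_death}. Throughout I assume $[\tau] \neq 0$, since otherwise $\Rho^{\IntDecDelta}_\tau$ is empty and there is nothing to prove.

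First I would observe that $[\tau]$ is a class in $\HH_k(\SCOne^\psi \cap \SCTwo^N) = \HH_k(\SCOne^\psi)$, the top level of the auxiliary filtration in Equation (\ref{auxfilt}). Expressing $[\tau]$ in terms of the bars alive at $N$, the bars appearing with nonzero coefficient are by definition exactly the elements of the $\IntDecDelta$-bar representation $S^{\IntDecDelta}_{[\tau]}$ at $N$, namely $\Rho^{\IntDecDelta}_\tau$. Applying Lemma \ref{lemma:class_birth_death} to the persistence module $P\HH_k(\intFilt)$ then shows that the birth parameter $\beta([\tau])$, in the sense of Definition \ref{def:cycle_birth_death}, equals $\max\{\intFiltBirth(\rho) \mid \rho \in \Rho^{\IntDecDelta}_\tau\}$, which is precisely $\ell_0$ by its construction in step (2-c) of Algorithm \ref{alg:cycle_to_cycle_extension}. (Incidentally this gives a second proof that $\ell_0$ is independent of $\IntDecDelta$, since $\beta([\tau])$ is; cf.\ Lemma \ref{lemma:p_Y independence}.)

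Next I would unwind Definition \ref{def:cycle_birth_death} for this particular class. Since $[\tau]$ lives at the top parameter $N$ of the auxiliary filtration, its maximal persistent subspace $W^\bullet$ satisfies $W^i = \text{span}_\field\{[x] \in \HH_k(\SCOne^\psi \cap \SCTwo^i) \mid \eta^i_N([x]) = [\tau]\}$ for $i < N$, with $W^N = \text{span}_\field\{[\tau]\}$, and $\beta([\tau]) = \min\{i \leq N \mid W^i \neq 0\}$. The one identification to make is that the structure map $\eta^i_N : \HH_k(\SCOne^\psi \cap \SCTwo^i) \to \HH_k(\SCOne^\psi \cap \SCTwo^N) = \HH_k(\SCOne^\psi)$ is exactly the inclusion-induced map $\chi_i$ of Definition \ref{restriction_def}. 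Hence the hypothesis $[\tau] = \chi_\ell[w]$ for some $[w] \in \HH_k(\SCOne^\psi \cap \SCTwo^\ell)$ says precisely that $[w] \in W^\ell$; since $[\tau] \neq 0$ forces $[w] \neq 0$, we conclude $W^\ell \neq 0$, and therefore $\ell \geq \min\{i \leq N \mid W^i \neq 0\} = \beta([\tau]) = \ell_0$.

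I do not anticipate a genuine obstacle: the statement is a repackaging of the definition of $\ell_0$ together with Lemma \ref{lemma:class_birth_death}. The only points deserving care are the bookkeeping of indices — in Definition \ref{def:cycle_birth_death} the distinguished parameter is the one at which the given class lives, here $N$ rather than the $\ell$ in the lemma statement — and the boundary case $\ell = N$, where the claim is immediate since $\ell_0 \leq N$.
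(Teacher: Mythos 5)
Your proof is correct, but it is packaged differently from the paper's. The paper argues by contradiction with a direct diagram chase: assuming $\ell < \ell_0$ and $[\tau]=\mapx_\ell[w]$, it pulls $[w]$ back through the isomorphism $\IntDecDelta^\ell$ to an element $w'\in I^\ell$, pushes forward to $I^N$, and uses injectivity of $\IntDecDelta^N$ to conclude that $\iota(w')$ must equal $\sum_i \IntDecDeltaLower_i\,\vec e^{\,N}_{\rho_i}$; this forces every bar $\rho_i\in\Rho^{\IntDecDelta}_\tau$ to be alive at $\ell$, contradicting $\ell<\ell_0=\max_i\intFiltBirth(\rho_i)$. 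You instead identify $\ell_0$ with the birth parameter of $[\tau]$ in $P\HH_k(\intFilt)$ via Lemma \ref{lemma:class_birth_death}, and then read the conclusion off Definition \ref{def:cycle_birth_death}: the hypothesis $[\tau]=\mapx_\ell[w]$ with $[\tau]\neq 0$ gives $W^\ell\neq 0$, hence $\ell\geq\beta([\tau])=\ell_0$. Both arguments rest on the same underlying fact — the coefficients of $[\tau]$ in the interval-decomposition basis at $N$ determine the earliest parameter at which a preimage can exist — but the paper's version is self-contained, whereas yours delegates that fact to Lemma \ref{lemma:class_birth_death}, whose proof the paper explicitly omits. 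What your route buys is brevity and a bonus observation (a second proof that $\ell_0$ is independent of $\IntDecDelta$); what it costs is that the real content of the lemma is now hidden inside an unproved citation, so in a self-contained write-up you would still need to supply essentially the paper's diagram chase to justify Lemma \ref{lemma:class_birth_death}. Your bookkeeping is otherwise careful and correct: the identification of the structure map $\eta^\ell_N$ with $\mapx_\ell$, the nonvanishing of $[w]$, and the degenerate case $[\tau]=0$ are all handled properly.
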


\begin{proof}
Assume that $S^{\IntDecDeltaOne}_{[\tau]} = \{ \IntDecDeltaOneLower_1 \intXdelta_1, \dots, \IntDecDeltaOneLower_m \intXdelta_m\}$ is the $\IntDecDeltaOne$-bar representation of $[\tau]$ at $N$. That is,
\begin{equation}
\label{Int_x2}
[\tau] = 
\IntDecDeltaOneLower_1[\intXdelta^{\IntDecDeltaOne, N}_1] + \dots + \IntDecDeltaOneLower_m [\intXdelta^{\IntDecDeltaOne, N}_m] = \IntDecDeltaOneLower_1 [\IntDecDeltaOne^N(\vec{e}^{\; N}_{\intXdelta_1})] + \dots + \IntDecDeltaOneLower_m [\IntDecDeltaOne^N(\vec{e}^{\; N}_{\intXdelta_m})]
\end{equation}
in $\HH_k(\SCOne^{\psi})$ for nonzero $f_1, \dots, f_m$.

We prove via contradiction. Assume that there exists some $\ell < \ell_0$ and $[w] \in \HH_k(\SCOne^{\psi} \cap \SCTwo^{\ell})$ such that $[\tau] = \mapx_{\ell}([w])$. Consider the following portion of the diagram for the interval decomposition of $P\HH_k(\intFilt).$

\[
\begin{tikzcd}
I^{\ell} \arrow[r, "\iota"] \arrow[d,"\IntDecDelta^{\ell}"] & I^{N} \arrow[d, "\IntDecDelta^{N}"] \\
\HH_k(\SCOne^{\psi} \cap \SCTwo^{\ell}) \arrow[r, "\chi_{\ell}" ]& \HH_k(\SCOne^{\psi})
\end{tikzcd}
\]
Since $\IntDecDelta^{\ell}$ is an isomorphism, there exists $w' \in I^{\ell}$ such that $[w] = [\IntDecDelta^{\ell}(w')]$. By commutativity, $\IntDecDelta^{N} \circ \iota (w') = [\tau]$. By Equation \ref{Int_x2} and the fact that $\IntDecDelta^{N}$ is an isomorphism, $\iota(w') = \IntDecDeltaOneLower_1 \vec{e}^{\; N}_{\intXdelta_1} + \dots + \IntDecDeltaOneLower_m \vec{e}^{\; N}_{\intXdelta_m}$. In particular, this implies that bars $\intXdelta_1, \dots, \intXdelta_m$ are present at parameter $\ell$. That is, $\intFiltBirth(\intXdelta_1), \dots, \intFiltBirth(\intXdelta_m) \leq \ell$. Recall that $\ell_0 = \max \{ \intFiltBirth(\intXdelta_1), \dots, \intFiltBirth(\intXdelta_m) \} $.
This contradicts $\ell < \ell_0$.
\end{proof}

\subsection{ $\restrictionalgo^{\IntDecDelta}_{\ell}$ is well-defined and is the desired set of restrictions.}

We now show that $\restrictionalgo^{\IntDecDelta}_{\ell}$ from Algorithm \ref{alg:cycle_to_cycle_extension} step (3-b-iii) is independent of the choice of interval decomposition $\IntDecDelta:\mathbb{I}_{\bc_k(\intFilt)} \to P\HH_k(\intFilt)$ and that $\restrictionalgo_{\ell}^{\IntDecDelta}$ coincides with the set of restrictions $\restrictiondef_{\ell}$ defined in Definition \ref{restriction_def}.

We first set the notations. Let $\intFilt$, $\IntDecDelta$, $\Rho^{\IntDecDelta}_{\tau}$, $\Rho_{\text{short}}$, and $p_Y$ be as in Algorithm \ref{alg:cycle_to_cycle_extension}. Let
\[
\Rho^{\IntDecDelta}_{\text{long}} = \{ \rho \in \bc_k(\intFilt) \; | \; \intFiltDeath(\rho) = \infty \text{ and } \rho \notin \Rho^{\IntDecDelta}_{\tau} \}. \]
At any parameter $\ell$, we partition the bars of $\bc_k(\intFilt)$ at parameter $\ell$ into  following three sets
\begin{align}
\label{eq:Rho}
\Rho^{\ell, \IntDecDelta}_{\tau} &= \{ \intXdelta \in \Rho^{\IntDecDelta}_{\tau} \; | \; \intFiltBirth(\intXdelta) \leq \ell  \} \nonumber \\
\Rho^{\ell, \IntDecDelta}_{\text{long}} &= \{ \intXdelta \in \Rho^{\mathcal{F}}_{\text{long}} \; | \; \intFiltBirth(\intXdelta) \leq \ell \}
\\
\Rho^{\ell}_{\text{short}} &= \{ \intXdelta \in \Rho_{\text{short}} \; | \; \intFiltBirth(\intXdelta) \leq \ell < \intFiltDeath(\intXdelta) < \infty \}.  \nonumber
\end{align}

Finally, we state a lemma that is frequently used.

\begin{lemma}
\label{lemma:restriction_to_tau}
Let $\mapx_{\ell}: \HH_k(\SCOne^{\psi} \cap \SCTwo^{\ell}) \to \HH_k(\SCOne^{\psi})$ be the map induced by inclusion. 
Then, 
\begin{align*}
\chi_{\ell} \big( [\intXdelta^{\IntDecDelta, \ell}] \big) &= [\intXdelta^{\IntDecDelta, N}] \text{ for } \intXdelta \in \Rho^{\ell, \IntDecDelta}_{\tau} \cup \Rho^{\ell, \IntDecDelta}_{\emph{long}}, \\
\chi_{\ell} \big([\intXdelta^{\IntDecDelta, \ell}] \big) &= 0 \text{ for } \intXdelta \in \Rho_{\emph{short}}^{\ell}
\end{align*} 
In particular, if $[w] \in \restrictionalgo^{\mathcal{F}}_{\ell}$, then $\chi_{\ell}([w]) = [\tau]$.
\end{lemma}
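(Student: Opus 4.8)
The plan is to reduce everything to the single commutative square of the interval decomposition $\IntDecDelta$ of $P\HH_k(\intFilt)$ linking the parameter $\ell$ to the terminal parameter $N$, using the fact that $\mapx_\ell$ is exactly the structure map of $P\HH_k(\intFilt)$ from level $\ell$ to level $N$, since $\SCOne^\psi \cap \SCTwo^N = \SCOne^\psi$. Writing $I^\ell, I^N$ for the graded pieces of the barcode module $\mathbb{I}_{\bc_k(\intFilt)}$ and $\iota$ for the composite of its structure maps from $\ell$ to $N$, commutativity of the interval-decomposition diagram gives
\[
\begin{tikzcd}
I^\ell \arrow[r, "\iota"] \arrow[d, "\IntDecDelta^\ell"] & I^N \arrow[d, "\IntDecDelta^N"] \\
\HH_k(\SCOne^\psi \cap \SCTwo^\ell) \arrow[r, "\mapx_\ell"] & \HH_k(\SCOne^\psi)
\end{tikzcd}
\]
so that for any bar $\intXdelta$ alive at $\ell$, $\mapx_\ell\big([\intXdelta^{\IntDecDelta,\ell}]\big) = \mapx_\ell\big(\IntDecDelta^\ell(\vec{e}^{\,\ell}_\intXdelta)\big) = \IntDecDelta^N\big(\iota(\vec{e}^{\,\ell}_\intXdelta)\big)$. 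The whole computation thus reduces to evaluating $\iota(\vec{e}^{\,\ell}_\intXdelta)$ inside the barcode module, where $\iota$ acts summand by summand as the structure maps of the interval module $I[\intFiltBirth(\intXdelta),\intFiltDeath(\intXdelta))$, i.e.\ as the identity on $[\intFiltBirth(\intXdelta),\intFiltDeath(\intXdelta))$ and as zero once the parameter leaves that interval.

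Next I would split into the two cases of the statement. If $\intXdelta \in \Rho^{\ell,\IntDecDelta}_\tau \cup \Rho^{\ell,\IntDecDelta}_{\text{long}}$, I claim $\intFiltDeath(\intXdelta) = \infty$: bars of $\Rho^{\IntDecDelta}_{\text{long}}$ have $\intFiltDeath = \infty$ by definition, while bars of $\Rho^{\IntDecDelta}_\tau$ occur in an $\IntDecDelta$-bar representation of $[\tau]$ at parameter $N$ and hence, by Definition \ref{def:bar_representation_of_class} applied at the terminal filtration level, are alive at $N$, so again $\intFiltDeath = \infty$. Together with $\intFiltBirth(\intXdelta) \le \ell$ this means $[\ell,N] \subseteq [\intFiltBirth(\intXdelta),\intFiltDeath(\intXdelta))$, so $\iota(\vec{e}^{\,\ell}_\intXdelta) = \vec{e}^{\,N}_\intXdelta$ and $\mapx_\ell([\intXdelta^{\IntDecDelta,\ell}]) = \IntDecDelta^N(\vec{e}^{\,N}_\intXdelta) = [\intXdelta^{\IntDecDelta,N}]$. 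If instead $\intXdelta \in \Rho^\ell_{\text{short}}$, then $\intFiltBirth(\intXdelta) \le \ell < \intFiltDeath(\intXdelta) < \infty$, so $\intFiltDeath(\intXdelta) \le N$; the composite $\iota$ passes through the zero space $I[\intFiltBirth(\intXdelta),\intFiltDeath(\intXdelta))^{\intFiltDeath(\intXdelta)}$, hence $\iota(\vec{e}^{\,\ell}_\intXdelta)=0$ and $\mapx_\ell([\intXdelta^{\IntDecDelta,\ell}]) = \IntDecDelta^N(0) = 0$.

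Finally, for the ``in particular'' clause, $\restrictionalgo^{\IntDecDelta}_\ell$ is only defined for $\ell \in p_Y$, for which $\ell \ge \ell_0 = \max_{\intXdelta \in \Rho^{\IntDecDelta}_\tau}\intFiltBirth(\intXdelta)$; since every bar of $\Rho^{\IntDecDelta}_\tau$ has infinite death, this forces $\Rho^{\ell,\IntDecDelta}_\tau = \Rho^{\IntDecDelta}_\tau = \{\intXdelta_1,\dots,\intXdelta_m\}$. Then for $[w] = \sum_{i=1}^m \IntDecDeltaLower^{\ast}_i [\intXdelta_i^{\IntDecDelta,\ell}] + \sum_{\intXdelta \in \Rho^\ell_{\text{short}}} c_\intXdelta [\intXdelta^{\IntDecDelta,\ell}] \in \restrictionalgo^{\IntDecDelta}_\ell$, linearity of $\mapx_\ell$ together with the two computations above yields $\mapx_\ell([w]) = \sum_{i=1}^m \IntDecDeltaLower^{\ast}_i [\intXdelta_i^{\IntDecDelta,N}] = [\tau]$, the last equality being the defining property of the $\IntDecDelta$-bar representation $S^{\IntDecDelta}_{[\tau]} = \{\IntDecDeltaLower^{\ast}_1 \intXdelta_1,\dots,\IntDecDeltaLower^{\ast}_m \intXdelta_m\}$ of $[\tau]$ at $N$. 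The argument is essentially bookkeeping around one square, and the only point requiring care — the main obstacle, such as it is — is the justification that bars in $\Rho^{\IntDecDelta}_\tau$ are alive at the terminal parameter $N$, which is what makes the ``short'' versus ``long'' dichotomy coincide with ``dies before $N$'' versus ``survives to $N$.''
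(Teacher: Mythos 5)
Your proposal is correct and takes the same route as the paper, whose entire proof is the observation that the square
$\iota\colon I^\ell \to I^N$ over $\mapx_\ell\colon \HH_k(\SCOne^\psi\cap\SCTwo^\ell)\to\HH_k(\SCOne^\psi)$ commutes; you have simply filled in the details the paper leaves implicit (that $\iota$ acts summand-wise as identity or zero according to whether $\intFiltDeath(\intXdelta)=\infty$, and that bars in $\Rho^{\IntDecDelta}_\tau$ have infinite death because they appear in a bar representation at $N$).
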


\begin{proof}
Follows from the commutativity of the following portion of the diagram for the interval decomposition of $P\HH_k(\intFilt).$ 
\[
\begin{tikzcd}
I^{\ell} \arrow[r, "\iota"] \arrow[d,"\IntDecDelta^{\ell}"] & I^{N} \arrow[d, "\IntDecDelta^{N}"] \\
\HH_k(\SCOne^{\psi} \cap \SCTwo^{\ell}) \arrow[r, "\chi_{\ell}" ]& \HH_k(\SCOne^{\psi})
\end{tikzcd}
\]
\end{proof}

\begin{restatable}{theorem}{restrictalgobasisindep}
\label{restrictalgo_basis_independence}
Let $\IntDecDeltaOne, \IntDecDeltaTwo: \mathbb{I}_{\bc_k(\intFilt)} \to P\HH_k(\intFilt)$ be two different interval decompositions. Given $\ell \in p_Y$, let $\restrictionalgo^{\IntDecDeltaOne}_{\ell}$ and $\restrictionalgo^{\IntDecDeltaTwo}_{\ell}$ each denote the set of restrictions in step (3-b-iii) of Algorithm \ref{alg:cycle_to_cycle_extension}. Then, $\restrictionalgo^{\IntDecDeltaOne}_{\ell} = \restrictionalgo^{\IntDecDeltaTwo}_{\ell}$.
\end{restatable}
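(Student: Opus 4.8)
The plan is to exhibit $\restrictionalgo^{\IntDecDeltaOne}_\ell$ and $\restrictionalgo^{\IntDecDeltaTwo}_\ell$ (for a fixed $\ell\in p_Y$) as one and the same affine subspace of $\HH_k(\SCOne^\psi\cap\SCTwo^\ell)$: I will identify their common direction with $\ker\mapx_\ell$, where $\mapx_\ell : \HH_k(\SCOne^\psi\cap\SCTwo^\ell)\to\HH_k(\SCOne^\psi)$ is the inclusion-induced map, and then check that the two base points produced by the algorithm differ by a vector in that kernel. Throughout I use that $\ell\geq\ell_0$ (immediate from the construction of $p_Y$) and that $\ell_0$, hence the decomposition-free sets $\Rho_{\text{short}}$ and $\Rho^\ell_{\text{short}}$, is independent of the interval decomposition by Lemma \ref{lemma:p_Y independence}.

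The first point is that for $\ell\geq\ell_0$ the bars in $\Rho^\ell_{\text{short}}$ are exactly the bars of $\bc_k(\intFilt)$ alive at $\ell$ with finite death parameter: if $\intXdelta$ is such a bar then $\ell_0\leq\ell<\intFiltDeath(\intXdelta)$, so $\intXdelta\in\Rho_{\text{short}}$ and hence $\intXdelta\in\Rho^\ell_{\text{short}}$, and the converse is immediate; correspondingly the bars alive at $\ell$ with infinite death parameter are exactly $\Rho^{\ell,\IntDecDelta}_\tau\cup\Rho^{\ell,\IntDecDelta}_{\text{long}}$. Now fix any interval decomposition $\IntDecDelta$ of $P\HH_k(\intFilt)$. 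The classes $\{[\intXdelta^{\IntDecDelta,\ell}] : \intXdelta\in\bc_k(\SCOne^\psi\cap\SCTwo^\ell)\}$ form a basis of $\HH_k(\SCOne^\psi\cap\SCTwo^\ell)$, and, since $\mapx_\ell$ is the structure map of $P\HH_k(\intFilt)$ from level $\ell$ to its top level $N$, Lemma \ref{lemma:restriction_to_tau} says $\mapx_\ell$ sends $[\intXdelta^{\IntDecDelta,\ell}]$ to $[\intXdelta^{\IntDecDelta,N}]$ when $\intFiltDeath(\intXdelta)=\infty$ and to $0$ when $\intFiltDeath(\intXdelta)<\infty$. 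The surviving classes $[\intXdelta^{\IntDecDelta,N}]$ are linearly independent, being the images under the isomorphism $\IntDecDelta^N$ of distinct standard basis vectors, so a linear map sending some basis vectors to $0$ and the rest to a linearly independent set has kernel the span of those it annihilates:
\[\ker\mapx_\ell=\text{span}_\field\{[\intXdelta^{\IntDecDelta,\ell}] : \intXdelta\in\bc_k(\SCOne^\psi\cap\SCTwo^\ell),\ \intFiltDeath(\intXdelta)<\infty\}=\text{span}_\field V^\ell_{\text{short}}.\]
Thus the direction $\text{span}_\field V^\ell_{\text{short}}$ of $\restrictionalgo^{\IntDecDelta}_\ell$ equals $\ker\mapx_\ell$, which is independent of $\IntDecDelta$.

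For the base points, write $S^{\IntDecDeltaOne,N}_{[\tau]}=\{\IntDecDeltaOneLower^*_1\intXdelta_1,\dots,\IntDecDeltaOneLower^*_m\intXdelta_m\}$ and $S^{\IntDecDeltaTwo,N}_{[\tau]}=\{\IntDecDeltaTwoLower^*_1\intXdelta'_1,\dots,\IntDecDeltaTwoLower^*_t\intXdelta'_t\}$. Since $\ell\geq\ell_0=\max_{\intXdelta\in\Rho^{\IntDecDeltaOne}_\tau}\intFiltBirth(\intXdelta)$, every $\intXdelta_i$ is alive at $\ell$, and likewise every $\intXdelta'_s$, so Lemma \ref{lemma:restriction_to_tau} gives $\mapx_\ell\bigl(\sum_{i=1}^m\IntDecDeltaOneLower^*_i[\intXdelta_i^{\IntDecDeltaOne,\ell}]\bigr)=\sum_{i=1}^m\IntDecDeltaOneLower^*_i[\intXdelta_i^{\IntDecDeltaOne,N}]=[\tau]$ and, symmetrically, $\mapx_\ell\bigl(\sum_{s=1}^t\IntDecDeltaTwoLower^*_s[(\intXdelta'_s)^{\IntDecDeltaTwo,\ell}]\bigr)=[\tau]$. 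Hence the two base points $\sum_i\IntDecDeltaOneLower^*_i[\intXdelta_i^{\IntDecDeltaOne,\ell}]$ and $\sum_s\IntDecDeltaTwoLower^*_s[(\intXdelta'_s)^{\IntDecDeltaTwo,\ell}]$ of $\restrictionalgo^{\IntDecDeltaOne}_\ell$ and $\restrictionalgo^{\IntDecDeltaTwo}_\ell$ differ by a vector in $\ker\mapx_\ell=\text{span}_\field V^\ell_{\text{short}}$. Two affine subspaces with equal direction whose base points differ by a direction vector are equal, so $\restrictionalgo^{\IntDecDeltaOne}_\ell=\restrictionalgo^{\IntDecDeltaTwo}_\ell$; as $\ell\in p_Y$ was arbitrary, this proves the theorem.

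The main obstacle is the bookkeeping that pins down $\Rho^\ell_{\text{short}}$ as exactly the set of finite bars alive at $\ell$, so that the basis of $\HH_k(\SCOne^\psi\cap\SCTwo^\ell)$ splits cleanly into the part $\mapx_\ell$ kills and the part it sends to a linearly independent set; this rests on $\ell\geq\ell_0$ together with the decomposition-independence of $\ell_0$ from Lemma \ref{lemma:p_Y independence}. Should that route stall, a more computational alternative is to track the transformation $[\intXdelta^{\IntDecDeltaTwo,\ell}]=\sum_{\mu}L^\ell_{\mu,\intXdelta}[\mu^{\IntDecDeltaOne,\ell}]$ induced by the barcode-module automorphism $\mathcal{L}=\IntDecDeltaOne^{-1}\circ\IntDecDeltaTwo$ and to use Lemmas \ref{lemma:basis_change} and \ref{lemma:diagonal} to show the two presentations of the affine subspace agree directly — but I expect the kernel argument to be shorter and cleaner.
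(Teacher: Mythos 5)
Your proof is correct, and it takes a somewhat different route from the paper's. The paper argues by mutual containment with explicit coefficient matching: it takes $[w]\in\restrictionalgo^{\IntDecDeltaTwo}_{\ell}$, expands it in the basis $\{[\intXdelta^{\IntDecDeltaOne,\ell}]\}$ indexed by the three-part partition $\Rho^{\ell,\IntDecDeltaOne}_{\tau}\cup\Rho^{\ell,\IntDecDeltaOne}_{\text{long}}\cup\Rho^{\ell}_{\text{short}}$, applies Lemma \ref{lemma:restriction_to_tau} to the condition $\mapx_{\ell}[w]=[\tau]$, and reads off that the coefficients on $\Rho^{\ell,\IntDecDeltaOne}_{\tau}$ are forced to be the $\IntDecDeltaOneLower^*_i$ and those on $\Rho^{\ell,\IntDecDeltaOne}_{\text{long}}$ are forced to vanish. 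You instead identify both affine subspaces with the single decomposition-free object $\mapx_{\ell}^{-1}([\tau])$, by computing $\ker\mapx_{\ell}=\text{span}_\field V^{\ell}_{\text{short}}$ and checking that each base point lies in the fiber over $[\tau]$; this is the same underlying mechanism (Lemma \ref{lemma:restriction_to_tau} plus linear independence of the surviving classes at level $N$), repackaged as ``direction $=$ kernel, base point $\in$ fiber.'' Your version is arguably cleaner in that decomposition-independence becomes automatic once both sets are shown to equal $\mapx_{\ell}^{-1}([\tau])$, and it proves Theorem \ref{thm:restriction_equality} ($\restrictionalgo_{\ell}=\restrictiondef_{\ell}$) in the same stroke, whereas the paper proves that separately by essentially the same coefficient argument. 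The one piece of bookkeeping you rightly make explicit, which the paper leaves implicit in calling Equation \ref{eq:Rho} a partition, is that for $\ell\geq\ell_0$ the set $\Rho^{\ell}_{\text{short}}$ really is all finite bars alive at $\ell$; your appeals to $\ell\in p_Y\Rightarrow\ell\geq\ell_0$ and to Lemma \ref{lemma:p_Y independence} for the decomposition-independence of $\ell_0$ (and hence of $\ell_0\leq\ell$ for the $\IntDecDeltaTwo$ base point) close that gap correctly.
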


\begin{proof}

Without loss of generality, let $S^{\IntDecDeltaOne}_{[\tau]} = \{ \IntDecDeltaOneLower^*_{1} \intXdelta_{1}, \dots, \IntDecDeltaOneLower^*_{m} \intXdelta_{m}\}$ be the $\mathcal{F}$-bar representation of $[\tau]$ at $N$. That is,
\begin{equation}
\label{eq:x_C}
[\tau] = \IntDecDeltaOneLower^*_{1} [\intXdelta^{\IntDecDeltaOne, N}_{1}] + \dots + \IntDecDeltaOneLower^*_{m} [\intXdelta^{\IntDecDeltaOne, N}_{m}] \text{ in } \HH_k(\SCOne^{\psi}) \text{ for nonzero }f_1^*, \dots, f_m^*.
\end{equation}

Recall the partition of $\bc_k(Z^\psi \cap Y^\bullet)$ at parameter $\ell$ into sets in Equation \ref{eq:Rho}. The collection $\IntDecDeltaOneLetter = \{ [\intXdelta^{\IntDecDeltaOne, \ell}] \; | \; \rho \in \Rho^{\ell, \IntDecDeltaOne}_{\tau} \cup \Rho^{\ell, \IntDecDeltaOne}_{\text{long}} \cup \Rho^{\ell}_{\text{short}} \}$ form a basis of $\HH_k(\SCOne^{\psi} \cap \SCTwo^{\ell} )$. To prove Theorem \ref{restrictalgo_basis_independence}, we will take $[w] \in \restrictionalgo^{\IntDecDeltaTwo}_{\ell}$, express it using the basis $\IntDecDeltaOneLetter$, and show that $[w] \in \restrictionalgo^{\IntDecDeltaOne}_{\ell}$. 

Let $[w] \in \restrictionalgo^{\IntDecDeltaTwo}_{\ell}$. When we express $[w]$ using basis $F$, it will have the form 
\begin{equation}
\label{w_G}
[w] = \sum_{\rho_j \in \Rho^{\ell, \IntDecDeltaOne}_{\tau}} \IntDecDeltaOneLower^{\tau}_j [\intXdelta_j^{\IntDecDeltaOne, \ell}] + \sum_{\rho_j \in \Rho^{\ell, \IntDecDeltaOne}_{\text{long}}} \IntDecDeltaOneLower^{\text{long}}_j [\intXdelta_j^{\IntDecDeltaOne, \ell}] + \sum_{\rho_j \in \Rho^{\ell}_{\text{short}}} \IntDecDeltaOneLower^{\text{short}}_j [\intXdelta_j^{\IntDecDeltaOne, \ell}] 
\end{equation}
for some coefficients in $\field$. 
From Lemma \ref{lemma:restriction_to_tau}, we know that $\chi_{\ell}([w]) = [\tau]$. From Equation \ref{w_G} and Lemma \ref{lemma:restriction_to_tau},  
	\[ \mapx_{\ell}([w]) = \sum_{\intXdelta_j \in \Rho^{\ell, \IntDecDeltaOne}_{\tau}} \IntDecDeltaOneLower^{\tau}_j [\intXdelta_j^{\IntDecDeltaOne, N}] + \sum_{\intXdelta_j \in \Rho^{\ell, \IntDecDeltaOne}_{\text{long}}} \IntDecDeltaOneLower^{\text{long}}_j [\intXdelta_j^{\IntDecDeltaOne, N}], \]
and this must equal $[\tau]$. From Equation \ref{eq:x_C}, we know that $\IntDecDeltaOneLower^{\tau}_j = \IntDecDeltaOneLower^*_j$ for all $\intXdelta_j \in \Rho^{\ell, \IntDecDeltaOne}_{\tau}$, and $\IntDecDeltaOneLower^{\text{long}}_j= 0$ for all $\rho_j \in \Rho^{\ell, \mathcal{F}}_{\text{long}}$. We can re-write Equation \ref{w_G} as 
$$[w] = \sum_{i=1}^m f_i^* [\rho_i^{\IntDecDeltaOne, \ell}] + \sum_{\rho_j \in \Rho^{\ell}_{\text{short}}} \IntDecDeltaOneLower^{\text{short}}_j [\intXdelta_j^{\IntDecDeltaOne, \ell}], $$
for some coefficients $f^{\text{short}}_j$ in $\field$. Thus,  $[w] \in \restrictionalgo^{\IntDecDeltaOne}_{\ell}$.

Conversely, if $[w] \in \restrictionalgo^{\IntDecDeltaOne}_{\ell}$, the same argument shows that $[w] \in \restrictionalgo^{\IntDecDeltaTwo}_{\ell}$ as well. Thus, $\restrictionalgo^{\IntDecDeltaTwo}_{\ell} = \restrictionalgo^{\IntDecDeltaOne}_{\ell}$.
\end{proof}	

Since the set of restrictions $\restrictionalgo^{\IntDecDeltaOne}_\ell$ is independent of the interval decomposition $\IntDecDeltaOne$, we can omit the $\IntDecDeltaOne$. 
\begin{restatable}{theorem}{restrictionequality}
\label{thm:restriction_equality} 
Given parameter $\ell \in p_Y$, the set $\restrictionalgo_{\ell}$ found in Algorithm \ref{alg:cycle_to_cycle_extension} coincides with $\restrictiondef_{\ell}$ from Definition \ref{restriction_def}.
\end{restatable}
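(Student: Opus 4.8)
The plan is to fix once and for all an interval decomposition $\IntDecDelta$ of $P\HH_k(\intFilt)$, which is legitimate because Theorem~\ref{restrictalgo_basis_independence} already shows $\restrictionalgo^{\IntDecDelta}_\ell$ is independent of this choice, and then to establish the two inclusions $\restrictionalgo_\ell \subseteq \restrictiondef_\ell$ and $\restrictiondef_\ell \subseteq \restrictionalgo_\ell$ separately. Throughout I would write $S^{\IntDecDelta}_{[\tau]} = \{ f^*_1 \rho_1, \dots, f^*_m \rho_m \}$, so that $\Rho^{\IntDecDelta}_\tau = \{\rho_1, \dots, \rho_m\}$ and $[\tau] = \sum_{i=1}^m f^*_i [\rho_i^{\IntDecDelta, N}]$ in $\HH_k(\SCOne^{\psi})$, with all $f^*_i \neq 0$. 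A preliminary observation to record is that since $\ell \in p_Y$ one has $\ell \geq \ell_0 = \max_i \intFiltBirth(\rho_i)$, so every $\rho_i$ is alive at $\ell$, $\Rho^{\ell, \IntDecDelta}_\tau = \Rho^{\IntDecDelta}_\tau$, and the expression $\sum_i f^*_i [\rho_i^{\IntDecDelta, \ell}]$ is meaningful.

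For $\restrictionalgo_\ell \subseteq \restrictiondef_\ell$: an arbitrary element of $\restrictionalgo_\ell$ has the form $[w] = \sum_{i=1}^m f^*_i [\rho_i^{\IntDecDelta, \ell}] + \sum_{\rho_j \in \Rho^{\ell}_{\text{short}}} c_j [\rho_j^{\IntDecDelta, \ell}]$ for some $c_j \in \field$. Applying $\mapx_\ell$ and invoking Lemma~\ref{lemma:restriction_to_tau}---the short summands map to $0$ and $\mapx_\ell [\rho_i^{\IntDecDelta, \ell}] = [\rho_i^{\IntDecDelta, N}]$---yields $\mapx_\ell[w] = \sum_i f^*_i [\rho_i^{\IntDecDelta, N}] = [\tau]$, so $[w] \in \restrictiondef_\ell$.

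For $\restrictiondef_\ell \subseteq \restrictionalgo_\ell$: take $[w] \in \HH_k(\SCOne^{\psi} \cap \SCTwo^\ell)$ with $\mapx_\ell[w] = [\tau]$. The classes $\{ [\rho^{\IntDecDelta, \ell}] \,:\, \rho \in \bc_k(\intFilt),\ \intFiltBirth(\rho) \leq \ell < \intFiltDeath(\rho) \}$ form a basis of $\HH_k(\SCOne^{\psi} \cap \SCTwo^\ell)$, and these bars partition into $\Rho^{\ell, \IntDecDelta}_\tau$, $\Rho^{\ell, \IntDecDelta}_{\text{long}}$, and $\Rho^{\ell}_{\text{short}}$ as in Equation~\ref{eq:Rho}. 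Expanding $[w]$ in this basis and pushing forward by $\mapx_\ell$ with Lemma~\ref{lemma:restriction_to_tau}---short terms die, the others become the classes at $N$---shows that $[\tau]$ is a linear combination of $\{ [\rho^{\IntDecDelta, N}] : \rho \in \Rho^{\ell,\IntDecDelta}_\tau \cup \Rho^{\ell,\IntDecDelta}_{\text{long}} \}$ whose coefficients on $\Rho^{\ell,\IntDecDelta}_\tau$ and on $\Rho^{\ell,\IntDecDelta}_{\text{long}}$ are exactly the corresponding coefficients of $[w]$. But $\{ [\rho^{\IntDecDelta, N}] : \rho \in \Rho^{\IntDecDelta}_\tau \cup \Rho^{\IntDecDelta}_{\text{long}} \}$ is precisely a basis of $\HH_k(\SCOne^{\psi})$, since these are exactly the bars with $\intFiltDeath = \infty$, i.e.\ those alive at $N$; so comparing this expansion of $[\tau]$ with $[\tau] = \sum_i f^*_i [\rho_i^{\IntDecDelta, N}]$ and using uniqueness of coordinates forces the coefficient of $[w]$ on each $\rho_i$ to be $f^*_i$ and its coefficient on every long bar alive at $\ell$ to be $0$. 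A long bar born strictly after $\ell$ does not occur in the expansion of $[w]$ at all, so in all cases $[w] = \sum_i f^*_i [\rho_i^{\IntDecDelta, \ell}] + \sum_{\rho_j \in \Rho^{\ell}_{\text{short}}} c_j [\rho_j^{\IntDecDelta, \ell}] \in \restrictionalgo_\ell$.

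I do not anticipate a real obstacle: once Lemmas~\ref{lemma:restriction_to_tau} and \ref{restrictalgo_basis_independence} are in hand, the argument is pure bookkeeping with the interval-decomposition basis. The only step demanding genuine care is matching up ``which bars are alive at $\ell$'' with ``which bars are alive at $N$'': one needs $\ell \geq \ell_0$ to guarantee the $\rho_i$ persist to $\ell$, and one must note that a long bar born after $\ell$ contributes $0$ to $[w]$ for trivial reasons rather than through the coordinate comparison, so that the two descriptions of that coefficient genuinely coincide.
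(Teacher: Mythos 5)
Your proposal is correct and follows essentially the same route as the paper's proof: both inclusions are handled by expanding $[w]$ in the interval-decomposition basis partitioned as in Equation~\ref{eq:Rho}, pushing forward with $\mapx_\ell$ via Lemma~\ref{lemma:restriction_to_tau}, and matching coefficients against $S^{\IntDecDelta}_{[\tau]}$. Your explicit remarks that $\ell \geq \ell_0$ guarantees $\Rho^{\ell,\IntDecDelta}_\tau = \Rho^{\IntDecDelta}_\tau$ and that long bars born after $\ell$ contribute nothing are details the paper leaves implicit, but the argument is the same.
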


\begin{proof}
If $[w] \in \restrictionalgo_{\ell}$, then by Lemma \ref{lemma:restriction_to_tau}, we know that $\mapx_{\ell}([w]) = [\tau]$. So $[w] \in R_{\ell}$.

If $[w] \in R_{\ell}$, then $[w] \in \HH_k(\SCOne^{\psi} \cap Y^{\ell})$. Recall the partition of bars in Equation \ref{eq:Rho}. We can express $[w]$ using the basis $\{ [\intXdelta^{\IntDecDeltaOne, \ell}] \; | \; \intXdelta \in \Rho^{\ell, \IntDecDelta}_{\tau} \cup \Rho^{\ell, \IntDecDelta}_{\text{long}} \cup \Rho^{\ell}_{\text{short}} \}$ of $\HH_k(Z^\psi \cap Y^\ell)$ as 
\[[w] = \sum_{\rho_j \in \Rho^{\ell, \IntDecDelta}_{\tau}} \IntDecDeltaLower^{\tau}_j [\intXdelta^{\IntDecDeltaOne, \ell}_j] + \sum_{\rho_j \in \Rho^{\ell, \IntDecDelta}_{\text{long}}} \IntDecDeltaLower^{\text{long}}_j [\intXdelta^{\IntDecDeltaOne, \ell}_j] + \sum_{\rho_j \in \Rho^{\ell}_{\text{short}}} \IntDecDeltaLower^{\text{short}}_j [\intXdelta^{\IntDecDeltaOne, \ell}_j] \]
for some coefficients in $\field$.

Recall $S^{\IntDecDelta}_{[\tau]} =    \{\IntDecDeltaLower^*_1\intXdelta_1, \dots, \IntDecDeltaLower^*_m\intXdelta_m \}$, the $\IntDecDelta$-bar representation of $[\tau]$ at $N$ from step (2-a) of Algorithm \ref{alg:cycle_to_cycle_extension}. Since $[w] \in R_\ell$, we know $\mapx_{\ell}([w]) = [\tau]$. By Lemma \ref{lemma:restriction_to_tau}, the coefficients $\IntDecDeltaLower^{\tau}_j$ must agree with $\IntDecDeltaLower^*_j$ for all $\rho_j \in \Rho^{\ell, \mathcal{F}}_{\tau}$, and $\IntDecDeltaLower^{\text{long}}_j = 0$ for all $\rho_j \in \Rho^{\ell, \mathcal{F}}_{\text{long}}$. Thus, $[w] \in \restrictionalgo^{\ell}$.
\end{proof}

\subsection{The output of Algorithm \ref{alg:cycle_to_cycle_extension} suffices to find all cycle extensions of $[\tau]$. }

We now prove the main theorem.
\mainthm*

\begin{proof}
We first show that if $[y] \in \cealgo^*$, then $[y] \in E$. Assume $[y] \in \cealgo^*$. So $[y] \in \HH_k(\SCTwo^{\ell'})$ for some parameter $\ell'$, and there exists some $\ell \in p_Y$ and $[y_{\ell}] \in \cealgo_{\ell}$ such that $[y] = \lambda^{\ell}_{\ell'}([y_{\ell}])$. That is, $[y_{\ell}]$ is a cycle extension at $\ell$ via some $[w] \in \restrictionalgo_{\ell}=R_{\ell}$. By Lemma \ref{extension_later}, we know that $[y] = \lambda^{\ell}_{\ell'}([y_{\ell}])$ is a cycle extension via $\eta^{\ell}_{\ell'}([w])$ at $\ell'$, where $\eta^{\ell}_{\ell'}: \HH_k(\SCOne^{\psi} \cap Y^\ell) \to \HH_k(\SCOne^{\psi} \cap Y^{\ell'})$ is the map induced by inclusion. Note that $\eta^{\ell}_{\ell'}([w]) \in R_{\ell'}$. Thus, $[y] \in E_{\ell}$, and hence, $[y] \in E$.

Now, assume that $[y] \in E$. There exists some $\ell$ and $[w] \in R_{\ell}$ such that $[y] = \mapy_{\ell}([w])$, where $\mapy_{\ell}: \HH_k(\SCOne^{\psi} \cap \SCTwo^{\ell}) \to \HH_k(\SCTwo^{\ell})$ is the map induced by inclusion. Assume that the birth parameter of $[w]$ in $H_k(\SCOne^{\psi} \cap \SCTwo^\bullet)$ is $\bar{\ell}$. Let $\eta^{\bar{\ell}}_{ \ell}: \HH_k(\SCOne^{\psi} \cap \SCTwo^{\bar{\ell}}) \to \HH_k(\SCOne^{\psi} \cap \SCTwo^{\ell})$ be the map induced by inclusion. Then, $[w] = \eta^{\bar{\ell}}_{ \ell}([\bar{w}])$ for some $[\bar{w}] \in \HH_k(\SCOne^{\psi} \cap \SCTwo^{\bar{\ell}})$. By Lemma \ref{extension_earlier}, there exists a cycle extension $[\bar{y}] \in \HH_k(\SCTwo^{\bar{\ell}})$ at $\bar{\ell}$ via $[\bar{w}]$, and $\lambda^{\bar{\ell}}_{ \ell}([\bar{y}]) = [y]$, where $\lambda^{\bar{\ell}}_{\ell}: \HH_k(\SCTwo^{\bar{\ell}}) \to \HH_k(\SCTwo^{\ell})$ is the map induced by inclusion. One can check that $[\bar{w}]$ is a restriction of $[\tau]$. We thus know from Lemma \ref{min_e0} that $\ell_0 \leq \bar{\ell}$. Furthermore, since a new cycle is born at $\bar{\ell}$, the parameter should coincide with the birth of a new bar of $\bc_k(\intFilt)$. Thus, $\bar{\ell} \in p_Y$, $[\bar{w}] \in \restrictionalgo_{\bar{\ell}}$, and $[\bar{y}] \in \cealgo_{\bar{\ell}}$. That is, we must have found $[\bar{y}]$ as a cycle extension in Algorithm \ref{alg:cycle_to_cycle_extension}. Thus, $[y] = \lambda^{\bar{\ell}}_{\ell}([\bar{y}]) \in \cealgo^*$.
\end{proof}

\section{Proof of Lemma \ref{lemma:LZ}}
\label{Appendix:pf_LZ}
\LZ*
\begin{proof}
Let $M_Z$ be the collection of matrix representations of $\mathcal{L}^{\psi}$ for every possible persistence module automorphism $\mathcal{L}$. 

We first show that $L_Z \subseteq M_Z$. Let $L \in L_Z$. From Lemma \ref{lemma:linear_independence}, we know that $L$ is invertible. Further, any such isomorphism $L$ immediately extends to an automorphism of persistence modules $\mathcal{L}: \mathbb{I}_{\bc_k(\filtOne)} \to \mathbb{I}_{\bc_k(\filtOne)}$ because $\mathbb{I}_{\bc_k(\filtOne)}$ is a direct sum of interval modules. Thus, $L_Z \subseteq M_Z$.

We now show that $M_Z \subseteq L_Z$. Let $M$ be a matrix in $M_Z$. That is, $M$ is the matrix representation of the linear isomorphism $\mathcal{L}^{\psi}$ for some automorphism of persistence modules $\mathcal{L}: \mathbb{I}_{\bc_k(\filtOne)} \to \mathbb{I}_{\bc_k(\filtOne)}$. It follows from Lemmas \ref{lemma:basis_change} and \ref{lemma:diagonal} that $M \in L_Z$. Thus, $M_Z \subseteq L_Z$.

\end{proof}

\section{Proof of Lemma \ref{lemma:well_defined_cycles}}
\label{Pf_well_defined_cycles}
\welldefinedcycles*
\begin{proof}
For notational convenience, let $\psi = \delta(\tau)-1$. Let $\mathcal{L}: \mathbb{I}_{\bc_k(\filtOne)} \to \mathbb{I}_{\bc_k(\filtOne)}$ be the isomorphism $\mathcal{L} = \mathcal{C}^{-1} \circ \mathcal{B}$. Let $I^{\psi} = (\mathbb{I}_{\bc_k(\filtOne)})^{\psi}$. Let $B^{\psi}, C^{\psi}, L^{\psi}$ each be the matrix representation of linear isomorphisms $\mathcal{B}^{\psi}, \mathcal{C}^{\psi}: I^{\psi} \to \HH_k(\SCOne^{\psi})$ and $\mathcal{L}^{\psi}: I^{\psi} \to I^{\psi}$.

Let $\tau_1, \dots, \tau_m$ be some ordering of the bars of $\bc_k(\filtOne)$ alive at parameter $\psi$. Let $B$ and $C$ each denote the collection of homology class that $\mathcal{B}$ (or $\mathcal{C}$)-correspond to the bars at parameter $\psi$:
\[ B = \{ [\tau_1^{\mathcal{B}, \psi}], \dots, [\tau_m^{\mathcal{B}, \psi}] \}, \text{ and } C = \{ [\tau_1^{\mathcal{C}, \psi}], \dots, [\tau_m^{\mathcal{C}, \psi}] \}. \]
Note that both $B$ and $C$ are valid choices of basis for $\HH_k(\SCOne^\psi).$

Without loss of generality, assume that $\tau_1 = \tau$. Then, 
\begin{equation}
\label{tau_BC}
[\tau^{\mathcal{B}}_*] = [\tau_1^{\mathcal{B}, \psi}], \quad [\tau^{\mathcal{C}}_*] = [\tau_1^{\mathcal{C}, \psi}].
\end{equation}
Now consider the change of basis matrix $L^{\psi} = (C^{\psi})^{-1} \circ B^{\psi}$. Using the coordinates vectors with respect to the basis $B$ and $C$, we know that 
\[ [\tau^{\mathcal{B}}_*]_{C} = L^{\psi} [\tau^{\mathcal{B}}_*]_{B}. \]
We will show that $[\tau_*^{\mathcal{B}}]_{C} = [\tau_*^{\mathcal{C}}]_C$. From Equation \ref{tau_BC}, we know that $[\tau^{\mathcal{B}}_*]_{B} = [1, 0, \dots, 0]^T $, so $[\tau^{\mathcal{B}}_*]_{C}$ coincides with the $1^{\text{st}}$ column vector of $L^{\psi}$. By Lemma \ref{lemma:diagonal} and the fact that $\field = \field_2$, we know that $(L^{\psi})_{(1,1)} = 1$. Furthermore, given any $j \neq 1$, the corresponding interval $\tau_j$ must satisfy $\delta(\tau_j) > \delta(\tau)$, since the death parameters are unique. By Lemma \ref{lemma:basis_change}, we know that $(L^{\psi})_{(j, 1)} = 0$ for all $j \neq 1$. Thus, $[\tau^{\mathcal{B}}_*]_{C} = [1, 0, \dots, 0]^T$, which coincides with $[\tau^{\mathcal{C}}_*]_C$ by Equation \ref{tau_BC}. Thus, $[\tau^{\mathcal{B}}_*]_C = [\tau^{\mathcal{C}}_*]_C$, and 
$[\tau^{\mathcal{B}}_*]  = [\tau^{\mathcal{C}}_*]$ in $\HH_k(\SCOne^{\psi})$.
\end{proof}

\bibliographystyle{amsplain}
\bibliography{paper.bib}

\end{document}